\DeclareMathOperator{\AGL}{AGL}
\DeclareMathOperator{\Sym}{Sym}
\DeclareMathOperator{\F}{\mathbb{F}}
\newcommand\deq{\mathrel{\stackrel{\makebox[0pt]{\mbox{\normalfont\tiny def}}}{=}}}
\newcommand{\Size}[1]{\left\lvert #1 \right\rvert}
\newcommand{\Span}[1]{\left\langle\,#1\,\right\rangle}
\newcommand{\Set}[1]{\left\lbrace #1 \right\rbrace}
\newcommand{\puncture}[2]{\mathop{\vee}\!\left[#1;#2 \right]}
\let\phi\varphi
\theoremstyle{plain}
\newtheorem{theorem}{Theorem}
\newtheorem{lemma}[theorem]{Lemma}
\newtheorem{proposition}[theorem]{Proposition}
\newtheorem{corollary}[theorem]{Corollary}
\theoremstyle{remark}
\newtheorem{remark}{Remark}
\newtheorem{fact}{Fact}
\theoremstyle{definition}
\newtheorem{definition}[theorem]{Definition}
\newtheorem*{definition*}{Definition}
\newtheorem*{notation*}{Notation}
\def\hl#1{{\color{NavyBlue}#1}}
\numberwithin{equation}{section}
\title{Rigid commutators and a normalizer chain}
\title{Rigid commutators and a normalizer chain}
\author[R.~Aragona]{Riccardo Aragona}
\author[R.~Civino]{Roberto Civino}
\author[N.~Gavioli]{Norberto Gavioli}
\author[C.~M.~Scoppola]{Carlo Maria Scoppola}
\address[R.~Aragona, R.~Civino, N.~Gavioli and C.M.~Scoppola]{
	Dipartimento di Ingegneria e Scienze dell'Informazione e Matematica \\ 
	Universit\`a degli Studi dell'Aquila\\
	via Vetoio\\
	I-67100 Coppito (AQ)\\
	Italy}
\email[Riccardo Aragona]{riccardo.aragona@univaq.it}
\email[Roberto Civino]{roberto.civino@univaq.it}
\email[Norberto Gavioli]{norberto.gavioli@univaq.it}
\email[Carlo Maria Scoppola]{carlo.scoppola@univaq.it}
\thanks{All the authors are members of INdAM-GNSAGA (Italy) and of the
  group  ``Crittografia   e  Codici''   of  the   Italian  Matematical
  Union. R.  Civino is  partially funded by  the Centre  of Excellence
  EX-EMERGE at University of L'Aquila.  N. Gavioli is a member of
  the Centre of Excellence EX-EMERGE at University of L'Aquila.  }
\subjclass[2020]{20B30, 20B35, 20D20, 11P81, 05A17} \keywords{Symmetric group on
 $2^n$ elements; Elementary abelian regular subgroups; Sylow
 $2$-subgroups; Normalizers; Euler's Partition Theorem}
\begin{document}

\begin{abstract}
  The novel notion of rigid commutators is introduced to determine the
  sequence of  the logarithms of  the indices of a  certain normalizer
  chain  in the  Sylow $2$-subgroup  of the  symmetric group  on $2^n$
  letters. The  terms of this sequence  are proved to be  those of the
  partial  sums of  the partitions  of an  integer into  at least  two
  distinct parts, that relates to a famous Euler's partition theorem.
\end{abstract}

\maketitle

\section*{Introduction}

In a  recent paper~\cite{Aragona2020},  the authors observed  a rather
surprising  coincidence  between  the sequence of integers
\begin{equation*}
  1,  2, 4,  7, 11,  16,  23, 32,  43, 57\ldots  
\end{equation*}
representing the partial sums of  the famous sequence $\{b_j\}$ of the
number of  partitions of the  integer $j$  into at least  two distinct
parts, already studied by Euler~\cite{euler1748introductio}, and a  sequence
 of  group-theoretical
invariants. Our sequence arises in  connection with  a problem  in algebraic
cryptography, namely the  study of  the conjugacy  classes of 
affine elementary abelian  regular subgroups of the  symmetric group on
$2^n$  letters~\cite{cds06,calderini2017translation,Aragona2019}. This
is relevant  in the cryptanalysis of  block ciphers,
since it may trigger a  variation of the well-known \emph{differential
  attack}~\cite{bih91}: a  statistical attack which allows  to recover
information  on the  secret unknown  key by  detecting a  bias in  the
distribution of the  \emph{differences} on a given  set of ciphertexts
when the  corresponding plaintext difference is  known. In particular,
if $\mathbb F_2^n$ serves as the  message space of a block cipher (see
e.g.~\cite{aes})  which  has  been   proven  secure  with  respect  to
differential   cryptanalysis~\cite{nyberg1995provable}   and  if   $T$
represents the translation group on  $\mathbb F_2^n$, any conjugate of
$T$ can be potentially used to define new alternative operations on $\mathbb F_2^n$ for a
successful            differential           attack~\cite{Civino2019}.
In~\cite{Aragona2020}, on  the basis  of the  aforementioned interest,
the  authors studied  a chain  of normalizers,  which begins  with the
normalizer $N_n^0$ of $T$ in  a suitable Sylow $2$-subgroup $\Sigma_n$
of  $\Sym(2^n)$  and whose  $i$-th  term  $N_n^i$  is defined  as  the
normalizer in  $\Sigma_n$ of the  previous one.  After  providing some
experimental   as   well   as  theoretical   evidence,   the   authors
conjectured~\cite[Conjecture~1]{Aragona2020}         the        number
$\log_{2}\Size{N^{i}_n  : N^{i-1}_n}$  to  be independent  of $n$  for
$1\le  i\le n-2$,  and  to be  equal  to the  $(i+2)$-th  term of  the
sequence of the partial sums  of the sequence $\{b_j\}$\footnote{\ The
  sequence  $b_j+1$ appears  in several  others areas  of mathematics,
  from  number theory  to commutative  algebra~\cite{Enkosky2014}.  In
  particular, it was  already known to Euler  that $b_j+1$ corresponds
  to the number of partitions of $j$ into odd parts (see~\cite[Chapter
  16]{euler1748introductio}  and  \cite[\S 3]{Andrews2007}).   Several
  proofs of  this {Euler's partition  theorem} have been  offered ever
  since~\cite{Syl1882,  Andrews1994, Kim1999},  and several  important
  refinements                         have                        been
  obtained~\cite{Syl1882,Fine1988,Bes1994,Bous97,Straub2016}.}
previously      mentioned~\cite[\url{https://oeis.org/A317910}]{OEIS}.
\newline \newline In this paper  we completely settle this conjecture.
The first  attempts to  solve this problem  were based  on theoretical
techniques   which   clashed   with  their   own    growing
computational complexity.   For this reason,  we develop here  a novel
framework to approach the problem from  a different point of view.  In
this new approach, indeed, we take into account both the imprimitivity
and the nilpotence  of the Sylow $2$-subgroup  $\Sigma_n$ to represent
its elements in terms of  a special family of left-normed commutators,
that   we  call   \emph{rigid  commutators},   in  a   fixed  set   of
generators. Any such commutator $[X]$  can be identified with a subset
$X$  of $\{1,\dots,n\}$.   The  subgroups of  $\Sigma_n$  that can  be
generated  by   rigid  commutators  are called here
\emph{saturated subgroups}.  A careful inspection led us to prove that
the normalizers $N^i_n$ are saturated subgroups.  In particular, a set
of generators of  $N^i_n$ can be obtained from a  set of generators of
$N^{i-1}_n$ by adding the rigid commutators  of the form $[X]$ for all
$X$  such  that the  elements  of  the  complementary  set of  $X$  in
$\{1,\dots,k\}$,  where  $k=\max  X  \le  n$,  yield  a  partition  of
$i+2-n+k$ into at  least two distinct parts. This is  the key to prove
the  conjecture.
\newline
\newline
The advantage  of  adopting  rigid
commutators is twofold.  In the first place, they prove to be handy in
calculations with the use of  the \emph{rigid commutator machinery}, a
dedicated set of rules which we develop in this paper. Secondly, rigid
commutators can  be seen  as factors  in a  \emph{unique factorization
  formula} for  the elements  of any  given saturated  subgroup.  This
representation is crucial in showing  that the normalizers $N^i_n$ are
saturated.  By means of this result and of the machinery, we derive an
algorithm  which efficiently  computes  the normalizer  chain.
\newline
\newline
The paper is  organized as follows: in Section~\ref{sec:prel}
some basic facts  on the Sylow $2$-subgroup  $\Sigma_n$ of $\Sym(2^n)$
are recalled.  Section~\ref{sec:commutators} is totally devoted to the
introduction  and   the  study  of   rigid  commutators  and   to  the
construction    of    the     rigid    commutator    machinery.     In
Section~\ref{sec:main} the rigid commutator machinery is used to prove
the     conjecture    on     the    normalizer     chain    previously
mentioned~\cite[Conjecture~1]{Aragona2020}. 
In Section~\ref{sec:normalizer_saturated} it  is shown that each  term of
the normalizer chain  is a saturated group and  an efficient procedure
to determine the  rigid generators of the normalizers  is derived.  An
explicit construction  of the normalizer  chain in a specific  case is
provided   in    Section~\ref{sec:computation}, and  some  open    problems arising 
from computational evidence   are
discussed.  Finally, some  hints for  future investigations
are presented in Section~\ref{sec:concl}.

\section{The Sylow $2$-subgroup of $\Sym(2^n)$}\label{sec:prel}
Let $n$ be a non-negative integer.  We start recalling some well-known
facts about the Sylow $2$-subgroup
$\Sigma_n$  of  the  symmetric  group on  $2^n$  letters.
\newline
\newline
\noindent Let us consider the set
\begin{equation*}
  \mathcal{T}_n=\bigl\{w_1\dots  w_{n}   \mid  w_i  \in
  \{0,1\} \bigr\}
\end{equation*}
of binary words of length $n$, where $\mathcal{T}_0$ contains only
the empty word. The  infinite rooted binary tree  $\mathcal{T}$ is defined
as the graph  whose vertices are $\bigcup_{j\ge  0} \mathcal{T}_j$ and
where two  vertices, say  $w_1\dots w_{n}$  and $v_1\dots  v_{m}$, are
connected   by    an   edge    if   $|m-n|=1$   and    $w_i=v_i$   for
$1 \leq i \leq  \min(m,n)$. The empty word is the  root of the tree
and it is connected with both the two words of length $1$.

\noindent  We  can  define  a   sequence  $\Set{s_i}_{i  \geq  1}$  of
automorphisms of  this tree.  Each  $s_i$ necessarily fixes  the root,
which is the only vertex of degree $2$. The automorphism $s_1$ changes
the  value $w_1$  of the  first letter  of every  non-empty word  into
$\bar{w}_1\deq  (w_1+1)   \bmod  2$  and  leaves   the  other  letters
unchanged.  If $i\ge 2$, we define
\begin{equation}\label{eq:generators}
  (w_1\dots w_{n})s_i\deq
  \begin{cases}
    \text{empty word} & \text{if $n=0$} \\
    w_1\dots \bar{w_i}\dots  w_{n} & \text{if $n\ge i$ and $w_1=\dots=w_{i-1}=0$}\\
    w_1\dots w_{n} & \text{otherwise.}
  \end{cases}
\end{equation}
In general, $s_i$  leaves a word unchanged unless the  word has length
at least $i$ and the letters preceding the $i$-th one are all zero, in
which  case the  $i$-th letter  is increased  by $1$  modulo $2$.   If
$i \le n$  and the word $w_1\dots w_n\in  \mathcal{T}_n$ is identified
with                            the                            integer
$1+\sum_{i=1}^{n}2^{n-i} w_{i}\in \Set{1,\dots, 2^n}$, then $s_i$ acts
on $ \mathcal{T}_n$ as the  the permutation whose cyclic decomposition
is
\begin{equation*}
  \prod_{j=1}^{2^{n-i}}(j,j+2^{n-i})
\end{equation*}
which has order $2$.   In particular, the group $\Span{s_1,\dots,s_n}$
acts  faithfully on the  set  $\mathcal{T}_n$,  whose cardinality  is
$2^n$,  as a  Sylow  $2$-subgroup $\Sigma_n$  of  the symmetric  group
$\Sym(2^n)$ (see also Fig.~\ref{fig:tree}).

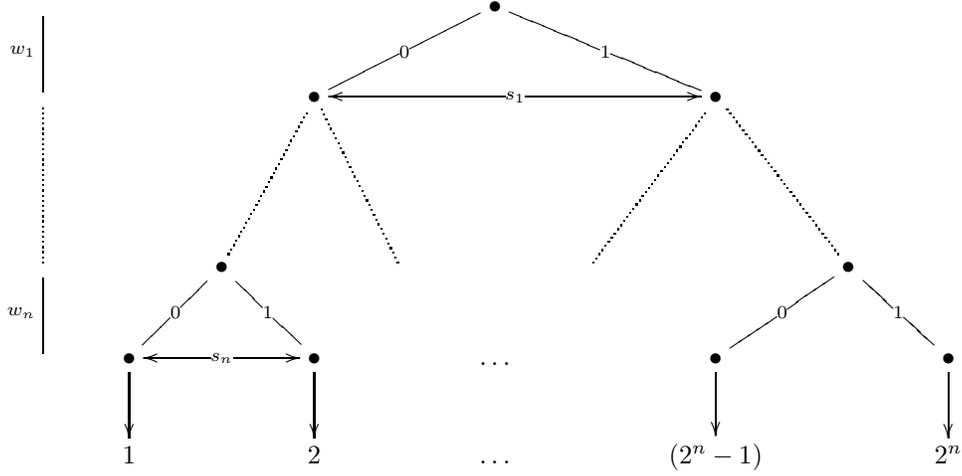
\begin{figure}
  \begin{equation*}
    \xymatrix{
      \ar@{-}[d]_{w_1} & && & &\bullet \ar@{-}[dll]|0 \ar@{-}[drr]|1 && && \\
      \ar@{.}[dd] & && \bullet \ar@{.}[ddl] \ar@{.}[ddr]\ar@{<->}[rrrr] |{s_1}  & && & \bullet \ar@{.}[ddl] \ar@{.}[ddr] & &\\
      &&&&&&&&& \\
      \ar@{-}[d]_{w_{n}} & & \bullet \ar@{-}[dl]|0 \ar@{-}[dr]|1 &&&&&& \bullet \ar@{-}[dl]|0 \ar@{-}[dr]|1 &\\
      & \bullet \ar@{<->}[rr]|{s_n} &  & \bullet &&\dots && \bullet & & \bullet \\
      & 1\ar@{<-}[u] &  & 2\ar@{<-}[u] &&\dots && (2^{n}-1)\ar@{<-}[u] & & 2^n \ar@{<-}[u]
    }
  \end{equation*}
  \caption{\footnotesize  The  action  of $\Sigma_n$  on  the  subtree
    $\bigcup_{i=0}^n \mathcal{T}_i$.}
  \label{fig:tree}
\end{figure}
It is also well known that
\begin{equation*}
  \Sigma_{n}	= \Span{s_n} \wr \Sigma_{n-1}   = \Span{s_n} \wr \dots \wr \Span{s_1} \cong \wr_{i=1}^n C_2 
\end{equation*}  
is the iterated wreath product of $n$ copies of the cyclic group
$C_2$ of order $2$.\newline
\newline
The  \emph{support} of a permutation is the  set of the
letters  which  are  moved  by   the  permutation.  We  say  that  two
permutations  $\sigma$ and  $\tau$  are \emph{disjoint}  if they  have
disjoint  supports; two
disjoint permutations always commute.  

The \emph{closure}
\begin{equation*}
  S_i\deq\Span{s_i}^{\Span{s_1,\dots,s_i}}
\end{equation*}
is  generated by  disjoint  conjugates  of $s_i$,  hence  $S_i$ is  an
elementary abelian $2$-group which is normalized by $S_j$ if $j\le i$.
Moreover,
$\Sigma_n=S_1  \ltimes  \dots  \ltimes S_n\cong  \Sigma_{n-1}  \ltimes
S_n$.

\section{Rigid      commutators}      \label{sec:commutators}      The
\emph{commutator}  of two  elements  $h$ and  $k$ in  a  group $G$  is
defined as $[h,k]\deq h^{-1}k^{-1}hk=h^{-1}h^k$.
The    \emph{left-normed    commutator}    of   the    $m$    elements
$g_1,\dots,g_m\in  G$  is  the  usual   commutator  if  $m=2$  and  is
recursively defined by
\begin{equation*}
  [g_1,\dots,g_{n-1},g_m]\deq \bigl[[g_1,\dots,g_{m-1}],g_m\bigr]
\end{equation*} if
$m\ge 3$.   It is well  known that the  commutator subgroup $G'$  of a
finitely generated nilpotent group $G$ can be generated by left-normed
commutators         involving        only         generators        of
$G$~\cite[III.1.11]{Huppert1967}.
From   now  on,   we  will   focus  on   left-normed  commutators   in
$s_1, \ldots,  s_n$.  For the sake of simplicity,  we
write   $[i_1,\dots,i_k]$  to   denote   the  left-normed   commutator
$[s_{i_1},\dots, s_{i_k}]$, when $k\ge 2$,  and we also write $[i]$ to
denote the element $s_i$.

\begin{definition}\label{def:rigid_commutators}
  A  left-normed commutator  $[i_1,\dots,i_k]$ is  called \emph{rigid,
    based at $i_1$  and hanging from $i_k$},  if $i_1>i_2>\dots >i_k$.
  Given  a subset  $X=\Set{i_1,\dots, i_k}  \subseteq \Set{1,\dots,n}$
  such that $i_1>i_2>\dots >  i_k$, the \emph{rigid commutator indexed
    by  $X$},   denoted  by  $[X]$,  is   the  left-normed  commutator
  $[i_1,\dots,i_k]$.  We set $[X]\deq  1$ when $X=\emptyset$.  The set
  of  all   the  rigid  commutators   of  $\Sigma_n$  is   denoted  by
  $\mathcal{R}$ and 
  we let $\mathcal{R}^*\deq \mathcal{R}\setminus\Set{[\emptyset]}$.
\end{definition}

At the  end of  this section we prove   that  every permutation  in the
Sylow $2$-subgroup $\Sigma_n$ can be expressed,  in a unique way, as a
product  of  the objects  previously  defined.   To this  purpose,  we
develop  below a  set of  rules to  perform computations  with (rigid)
commutators.
\subsection{Rigid commutator machinery}\label{subsec:rcm}
Let $1  \leq i_1,  i_2, \ldots,  i_k \leq  n$ be  integers and  let us
consider the  commutator $[i_1,\dots  ,i_k]$. The following  facts are
easily checked.

\begin{fact}\label{fact1}
  Denoting  by   $i  =   \max\Set{i_1,\dots  ,i_k}$,   the  commutator
  $[i_1,\dots ,i_k]$ is  a product of conjugates of $s_{i}$  by way of
  elements in  $\Span{s_{i_1},\dots ,s_{i_k}}$ and thus  it belongs to
  $S_i$. Any  two such  conjugates commute, since  they belong  to the
  same $S_i$.
\end{fact}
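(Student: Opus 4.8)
The plan is to argue by induction on $k$, using the two structural facts from Section~\ref{sec:prel}: that $S_i\isnorm \Span{s_1,\dots,s_i}$ for every $i$, and that each $S_i$ is elementary abelian, so that every element of $S_i$ is its own inverse and any two elements of $S_i$ commute. Since $i_1,\dots,i_k\le i$, the subgroup $H\deq\Span{s_{i_1},\dots,s_{i_k}}$ is contained in $\Span{s_1,\dots,s_i}$, and consequently every conjugate $s_i^{\,g}$ with $g\in H$ lies in the closure $S_i$. Thus it suffices to prove the sharper statement that $[i_1,\dots,i_k]$ is a product of conjugates $s_i^{\,g}$ with $g\in H$; membership in $S_i$ and the commuting assertion then follow at once, the latter because $S_i$ is abelian.

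For the base case $k=1$ one has $i=i_1$ and $[i_1]=s_i=s_i^{\,1}$, a single conjugate by the identity. For the inductive step write $c\deq[i_1,\dots,i_{k-1}]$ and $j\deq\max\Set{i_1,\dots,i_{k-1}}$, so that by induction $c$ is a product of conjugates $s_j^{\,g_\ell}$ with $g_\ell\in\Span{s_{i_1},\dots,s_{i_{k-1}}}\subseteq H$ and $c\in S_j$. Using $[c,s_{i_k}]=c^{-1}c^{s_{i_k}}$, I would split into two cases according to the position of the new index. If $i_k\le j$, then $i=j$, the element $s_{i_k}$ normalizes $S_i=S_j$, so both $c$ and $c^{s_{i_k}}$ lie in $S_i$; since $S_i$ is elementary abelian, $c^{-1}=c$ and $[c,s_{i_k}]=\bigl(\prod_\ell s_i^{\,g_\ell}\bigr)\bigl(\prod_\ell s_i^{\,g_\ell s_{i_k}}\bigr)$, a product of conjugates of $s_i$ by the elements $g_\ell,\,g_\ell s_{i_k}\in H$. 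If instead $i_k>j$, then $i=i_k$ and $s_i=s_{i_k}\in S_i$, while $c\in S_j\subseteq\Span{s_1,\dots,s_{i-1}}$ normalizes $S_i$; rewriting $[c,s_i]=(s_i^{\,c})^{-1}s_i=s_i^{\,c}\,s_i$, again using that elements of $S_i$ have order dividing $2$, exhibits it as a product of the two conjugates $s_i^{\,c}$ and $s_i$, with $c\in\Span{s_{i_1},\dots,s_{i_{k-1}}}\subseteq H$ and the identity in $H$.

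In both cases $[i_1,\dots,i_k]$ is a product of conjugates of $s_i$ by elements of $H\le\Span{s_1,\dots,s_i}$, each such conjugate lies in $S_i$, and the product therefore lies in $S_i$. The final assertion that any two such conjugates commute is immediate, as they are elements of the abelian group $S_i$.

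Since this is recorded as an easily checked fact, no serious obstacle arises; the only points requiring care are the case split on whether the appended index $i_k$ creates a new maximum and the bookkeeping that guarantees the conjugating elements never leave $H$. A quicker route to the bare membership $[i_1,\dots,i_k]\in S_i$, ignoring the finer structure, is to pass to the quotient $\Span{s_1,\dots,s_i}/S_i$, in which $s_i$ becomes trivial: because the maximal index $i$ occurs among $i_1,\dots,i_k$, the image of the commutator has a trivial entry and hence vanishes. I would nonetheless keep the inductive argument as the main line, since it is what yields the explicit description as a product of conjugates of $s_i$ by elements of $H$.
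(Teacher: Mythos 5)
Your proof is correct. The paper offers no argument for this statement (it is recorded as an ``easily checked'' fact), and your induction on $k$ --- resting on the normality of $S_i$ in $\Span{s_1,\dots,s_i}$, the elementary abelianness of $S_i$, and the expansion $[c,s]=c^{-1}c^{s}$, with the case split on whether $i_k$ exceeds the previous maximum and with the conjugating elements correctly kept inside $\Span{s_{i_1},\dots,s_{i_k}}$ --- is exactly the routine verification the authors leave to the reader.
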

\begin{fact}\label{fact2}
  As     a    direct     consequence    of     Fact~\ref{fact1},    if
  $\max\Set{i_1,\dots    ,i_k}=    \max\Set{j_1,\dots   ,j_l}$    then
  $[i_1,\dots ,i_k]$ and $[j_1,\dots,j_l]$ commute.
\end{fact}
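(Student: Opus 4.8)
The plan is to derive the statement essentially for free from Fact~\ref{fact1} together with the structure of the subgroups $S_i$ recalled in Section~\ref{sec:prel}. The single substantive input---that a left-normed commutator whose maximal index is $i$ lands inside $S_i$---has already been isolated in Fact~\ref{fact1}; what remains is only to exploit that $S_i$ is abelian.

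Concretely, I would set $i \deq \max\Set{i_1,\dots,i_k} = \max\Set{j_1,\dots,j_l}$, the common value of the two maxima. Applying Fact~\ref{fact1} to each commutator separately yields $[i_1,\dots,i_k] \in S_i$ and $[j_1,\dots,j_l] \in S_i$, precisely because in both cases the maximal index equals $i$. I would then invoke the description of $S_i$ from Section~\ref{sec:prel}: being generated by pairwise disjoint conjugates of $s_i$, the group $S_i$ is elementary abelian, hence abelian. Two elements of a common abelian subgroup commute, so $[i_1,\dots,i_k]$ and $[j_1,\dots,j_l]$ commute, as claimed.

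There is no genuine obstacle: the statement is advertised as a direct consequence of Fact~\ref{fact1}, and the only point one must not overlook is that membership in a single $S_i$ already forces commutativity, a feature that rests entirely on the abelianness of $S_i$ established in the preliminaries. The equality of the two maxima is used in an essential way, since it is exactly what places both commutators in the same $S_i$; without it the present argument gives nothing.
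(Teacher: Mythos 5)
Your proof is correct and follows exactly the paper's intended argument: Fact~\ref{fact1} places both commutators in the common subgroup $S_i$, where $i$ is the shared maximum, and the abelianness of $S_i$ (it is elementary abelian, being generated by disjoint conjugates of $s_i$, as recalled in Section~\ref{sec:prel}) forces them to commute. Nothing is missing, and no alternative route is needed.
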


\noindent Note that if $g\in S_i$  and $h\in S_j$, then $[g,h]\in S_k$,
where $k=\max\Set{i,j}$, so  $[g,h]^2=1$ since $S_k$ is elementary
abelian.   It  follows that  $[g,h,h]=[g,h]^2[g,h,h]=[g,h^2]=[g,1]=1$.
As a consequence we have:
\begin{fact}\label{lem:expunge_new}
  If $k \geq 2$ and $i_j = i_{j+1}$ for some $1 \leq j \leq k-1$, then
  $[i_1,\dots, i_k]=1$.
\end{fact}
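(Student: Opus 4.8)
The plan is to reduce the statement to the identity $[g,h,h]=1$, valid for any $g\in S_i$ and $h\in S_j$, which has just been established from the elementary abelianness of the $S_k$. All that remains is to recognise a copy of this pattern inside the left-normed commutator $[i_1,\dots,i_k]$ whenever two consecutive entries coincide, and then to propagate the resulting trivial factor through the remaining brackets.

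First I would dispose of the boundary case $j=1$, where $i_1=i_2$: here the innermost bracket is already $[s_{i_1},s_{i_1}]=1$, since every element commutes with itself, and so the whole left-normed commutator collapses. For the main case $j\ge 2$, I would set $g\deq[i_1,\dots,i_{j-1}]$ and $h\deq s_{i_j}=s_{i_{j+1}}$. By Fact~\ref{fact1} the commutator $g$ lies in $S_m$, where $m=\max\Set{i_1,\dots,i_{j-1}}$ (when $j=2$ this reads $g=s_{i_1}\in S_{i_1}$), while $h\in S_{i_j}$. The two equal consecutive entries then produce exactly the subword $[g,h,h]$ at the appropriate depth, so
\begin{equation*}
  [i_1,\dots,i_k]=\bigl[[g,h,h],s_{i_{j+2}},\dots,s_{i_k}\bigr]=[1,s_{i_{j+2}},\dots,s_{i_k}].
\end{equation*}
Finally, since $[1,x]=x^{-1}x=1$ for every $x$, each outer bracket sends the trivial element to the trivial element, yielding $[i_1,\dots,i_k]=1$.

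There is essentially no obstacle beyond bookkeeping: the genuine content is carried entirely by the preliminary identity $[g,h,h]=1$. The only points requiring a little care are confirming, through Fact~\ref{fact1}, that the prefix $g$ indeed sits inside a single $S_m$ so that the identity applies, and correctly isolating the degenerate case $j=1$, where no such prefix exists.
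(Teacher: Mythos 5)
Your proposal is correct and follows essentially the same route as the paper: the paper derives the identity $[g,h,h]=1$ for $g\in S_i$, $h\in S_j$ from the elementary abelianness of the groups $S_k$, and then states the fact as an immediate consequence, which is precisely the reduction you carry out. Your write-up merely makes explicit the bookkeeping the paper leaves implicit (identifying the prefix $g=[i_1,\dots,i_{j-1}]\in S_m$ via Fact~\ref{fact1}, the degenerate case $j=1$, and the propagation of the trivial factor through the outer brackets), all of which is sound.
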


The following  result is  crucial since  it allows  us to  rewrite every
commutator as a rigid commutator.
\begin{lemma}\label{lem:compound_commutators}
  Let $k\ge 2$ and $l\ge 1$ be integers. If
  \begin{equation*}
    c\deq[[i_1,\dots,i_k],[j_1,\dots,j_l]]
  \end{equation*}
  is the commutator of the two rigid commutators $[i_1,\dots,i_k]$ and
  $[j_1,\dots,j_l]$,  then
  \begin{enumerate}
  \item  \label{item:symmetry}the   order  of  $c$  divides   $2$,  so
    $ c=[[j_1,\dots,j_l],[i_1,\dots,i_k]]$;
  \item \label{item:same_start} if $i_1=j_1$, then $c=1$;
  
  \item \label{item:expunged}if $l\ge  2$ and $i_k >j_l  $, then $j_l$
    can be dropped, i.e.
    \begin{equation*}
      c= [[i_1,\dots,i_k],[j_1,\dots,j_{l-1}]];
    \end{equation*}
  \item        \label{item:disjoint_comm}         if        $i_1>j_1$,
    $\Set{i_1,\dots,i_k}   \cap  \Set{j_1,\dots,j_l}=\emptyset$,   and
    $s\deq\max\Set{h \mid i_h> j_1}$, then
    \begin{equation*}
      c=[i_1,\dots, i_s, j_1];
    \end{equation*}
  \item \label{item:sameend} if $l\ge 2$ and $i_k=j_l$, then
    \begin{equation*}
      c= [[i_1,\dots,i_{k-1}],[j_1,\dots,j_{l-1}], j_l ];
    \end{equation*}
  \item  \label{item:general_commutator} if  $i_s  >j_1 \ge  i_{s+1}$,
     then
    \begin{equation*}
      c=[i_1,\dots,      i_s,      j_1,      h_1,\dots,
      h_t],
    \end{equation*}   
     where $h_1 > \dots > h_t$ and $\Set{h_1,     \dots,     h_t}\deq    \Set{i_1,\dots,i_k}     \cap
    \Set{j_1,\dots,j_l}$.      Moreover,          $c=1$         if
    $j_1\in {\Set{i_1, \dots, i_k}}$.
  \end{enumerate}
\end{lemma}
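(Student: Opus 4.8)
The plan is to prove the easy items directly from the structural facts and to reduce the remaining four items to two core commutator identities, from which the general statement follows by an induction that never needs more than those identities plus Fact~\ref{lem:expunge_new}. Throughout I write $a=[i_1,\dots,i_k]$ and $b=[j_1,\dots,j_l]$, so $a\in S_{i_1}$ and $b\in S_{j_1}$ by Fact~\ref{fact1}. Item~\ref{item:symmetry} is then immediate: by the remark preceding the statement $c=[a,b]\in S_{\max\{i_1,j_1\}}$, which is elementary abelian, so $c^2=1$ and therefore $c=c^{-1}=[b,a]$. Item~\ref{item:same_start} is just as quick, since $i_1=j_1$ puts $a$ and $b$ in the common abelian group $S_{i_1}$ (this is Fact~\ref{fact2}), whence $c=1$.

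For the other items I would fix the standard commutator toolkit, the expansions $[x,yz]=[x,z]\,[x,y]^z$ and $[xy,z]=[x,z]^y\,[y,z]$ together with the Hall--Witt identity, and use repeatedly the two relations that hold inside the elementary abelian sections, $[g,h]^2=1$ and $[g,h,h]=1$ for $g,h$ in the various $S_i$. The real content is concentrated in items~\ref{item:expunged} and~\ref{item:sameend}. For item~\ref{item:expunged} I would write $b=[\,b',s_{j_l}]$ with $b'=[j_1,\dots,j_{l-1}]$ and expand $c=[a,[\,b',s_{j_l}]]$; the hypothesis $j_l<i_k$ makes $s_{j_l}$ strictly lower than every index appearing in $a$, so each correction term thrown off by the expansion repeats the index $j_l$ and dies by $[g,h,h]=1$ and Fact~\ref{lem:expunge_new}, leaving $c=[a,b']$. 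Item~\ref{item:sameend} is the identity $[\,[a',s],[b',s]\,]=[\,[a',b'],s\,]$ with $s=s_{i_k}=s_{j_l}$, which I would extract from the Hall--Witt identity after discarding the contributions that land in strictly higher weight and hence vanish.

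Granting items~\ref{item:expunged},~\ref{item:sameend} and~\ref{item:symmetry}, I would prove the general item~\ref{item:general_commutator} by induction on $k+l$, always in the regime $i_1>j_1$ and comparing the two smallest indices. If $\min X=\min Y$ (that is $i_k=j_l$), then necessarily $k\ge2$, and for $l\ge2$ item~\ref{item:sameend} rewrites $c=[\,[a',b'],s_{i_k}]$; the inductive hypothesis applied to $[a',b']$ followed by re-appending the global minimum $s_{i_k}$ reproduces the asserted rigid commutator, the common index $i_k$ becoming the last of the $h$'s. If the two minima differ, item~\ref{item:symmetry} lets me place the smaller one in the second commutator and strip it by item~\ref{item:expunged}, decreasing $k+l$ (this needs that commutator to have length at least two, which holds outside the base cases). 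The base cases $k=1$ and $l=1$ are genuinely easy. For $k=1$ one iterates item~\ref{item:expunged} to strip $Y$ down to $[i_1,j_1]$. For $l=1$ the commutator is $[i_1,\dots,i_k,j_1]$: if $j_1<\min X$ it is already rigid; if $j_1$ lies strictly between two indices of $X$ one flips by item~\ref{item:symmetry} and peels off the indices below $j_1$ by iterating item~\ref{item:expunged}, reading off the rigid form; and if $j_1\in X$ the same peeling ends at two equal consecutive indices, giving $c=1$ by Fact~\ref{lem:expunge_new}. Item~\ref{item:disjoint_comm} is precisely the case $X\cap Y=\emptyset$ of item~\ref{item:general_commutator} (no $h$'s survive), and the closing assertion ``$c=1$ if $j_1\in X$'' is the same Fact~\ref{lem:expunge_new} collapse, since then $j_1$ would appear both immediately after $i_s$ and as the first $h$.

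The hard part will be the two core identities, items~\ref{item:expunged} and~\ref{item:sameend}: each expansion produces several conjugated iterated commutators, and the whole argument rests on verifying that every one of them either repeats an index or is pushed into a strictly higher $S_i$, where it must vanish. Making the index inequalities $j_l<i_k$ and $i_k=j_l$ interact correctly with these cancellations is the only delicate bookkeeping; once it is in place, the reductions above and the induction are routine.
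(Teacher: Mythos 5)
Your handling of items \eqref{item:symmetry} and \eqref{item:same_start} is correct, and your induction deriving items \eqref{item:disjoint_comm} and \eqref{item:general_commutator} from items \eqref{item:symmetry}, \eqref{item:expunged} and \eqref{item:sameend} --- peeling off the smaller of the two minimal indices, with Fact~\ref{lem:expunge_new} producing the collapse when $j_1\in\Set{i_1,\dots,i_k}$ --- is essentially the paper's own argument (the paper, too, obtains \eqref{item:disjoint_comm} and \eqref{item:general_commutator} by iterated application of the other items). The genuine gap sits exactly where you flag ``the hard part'': items \eqref{item:expunged} and \eqref{item:sameend} are never proved, and both of the justifications you sketch would fail. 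For item \eqref{item:expunged}, set $s=s_{j_l}$ and $b'=[j_1,\dots,j_{l-1}]$; since $b'$ is an involution, $[b',s]=b'(b')^{s}$ and hence
\begin{equation*}
  c=\bigl[a,b'(b')^{s}\bigr]=\bigl[a,(b')^{s}\bigr]\cdot[a,b']^{(b')^{s}}.
\end{equation*}
The terms you must kill, namely $\bigl[a,(b')^{s}\bigr]$ and the conjugation of $[a,b']$ by $(b')^{s}$, are not left-normed commutators in the generators with a repeated adjacent entry: Fact~\ref{lem:expunge_new} and the relation $[g,h,h]=1$ say nothing about them, so ``repeats the index $j_l$ and dies'' is not a proof. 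For item \eqref{item:sameend} the situation is worse: your principle that Hall--Witt contributions ``land in strictly higher weight and hence vanish'' is false in $\Sigma_n$, whose nilpotency class is large --- the rigid commutators themselves are nontrivial left-normed commutators of every length up to $n$ --- so no term may be discarded on weight grounds alone.

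What both identities actually require, and what is missing from your proposal, is the disjointness-of-supports argument, which is the heart of the paper's proof and is a fact about permutations rather than about abstract commutator calculus. If $x,y\in\Span{s_m \mid m>j}$, then $\Supp(x)\subseteq\Set{1,\dots,2^{n-j}}$ while $\Supp\bigl(y^{s_j}\bigr)\subseteq\Set{2^{n-j}+1,\dots,2^{n-j+1}}$, so $x$ commutes with $y^{s_j}$. Taking $j=j_l$ (this applies because $i_k>j_l$ and $j_{l-1}>j_l$ place $a$ and $b'$ inside $\Span{s_m \mid m>j_l}$), this kills $\bigl[a,(b')^{s}\bigr]$ and fixes $[a,b']$ under conjugation by $(b')^{s}$, which proves item \eqref{item:expunged}; and it is exactly what licenses the rearrangement $uvuv\,u^{s}v^{s}u^{s}v^{s}=uu^{s}vv^{s}uu^{s}vv^{s}$ for involutions $u,v\in\Span{s_m \mid m>j_l}$, from which the paper derives the identity $[[u,v],s]=[[u,s],[v,s]]$, i.e.\ item \eqref{item:sameend} with $u=[i_1,\dots,i_{k-1}]$ and $v=[j_1,\dots,j_{l-1}]$. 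Once you substitute this support argument (or an equivalent use of the imprimitivity block structure) for the two sketches, the rest of your proposal goes through.
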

\begin{proof}
  Let us prove each claim separately.
  \begin{enumerate}
  \item The claim $c^2 = 1$ depends on the fact that $c\in S_i$, where
    the         index         $i$        is         defined         as
    $i\deq\max\Set{i_1,\dots,i_k,j_1,\dots,j_l}$.
  \item   If   $i_1=j_1$,   then   both   of   $[i_1,\dots,i_k]$   and
    $[j_1,\dots,j_l]$ belong  to $S_{i_1}$ which is  abelian, thus the
    claim follows.
  \item Assume that $l\geq 2$ and $j_l < i_k$. In this case
    \begin{align*}
      c&=[i_1,\dots,i_k][i_1,\dots,i_k]^{[j_1,\dots,j_{l-1}]s_{j_l}[j_1,\dots,j_{l-1}]s_{j_l}} \\
       &= [i_1,\dots,i_k]\bigl([i_1,\dots,i_k]^{[j_1,\dots,j_{l-1}]}\bigr)^{s_{j_l}[j_1,\dots,j_{l-1}]s_{j_l}}.
    \end{align*}
    The    permutations     $s_{j_l}[j_1,\dots,j_{l-1}]s_{j_l}$    and
    $[i_1,\dots,i_k]^{[j_1,\dots,j_{l-1}]}$  are  disjoint: the  first
    one           has          support           contained          in
    $\Set{2^{n-j_l}+1,\dots  , 2^{n-j_l+1}}$  and the  support of  the
    second one is contained in
    \begin{equation*}
      \Set{1,\dots, 2^{n-\min(i_k,j_{l-1})+1}}
      \subseteq \Set{1,\dots,
        2^{n-j_l}}.
    \end{equation*}
    Hence
    \begin{equation*}
      c=[i_1,\dots,i_k][i_1,\dots,i_k]^{[j_1,\dots,j_{l-1}]}=[[i_1,\dots,i_k],[j_1,\dots,j_{l-1}]],
    \end{equation*}
    which proves the claim.
  \item     The    claim     follows    by    a repeated applications of items
    \eqref{item:expunged} and \eqref{item:symmetry}.
  \item 
 For every $x,y\in G \deq\Span{s_{n},\dots, s_{i_{l}+1}}$ the permutations $x$ and $y^{s_{j_l}}$  are disjoint and so they commute. In particular, if $x^2=1$, then $[x,s_{j_l}]^2= (xx^{s_{j_l}})^2= x^2(x^2)^{s_{j_l}}=1$. 	If $a,b\in G $ are such that $a^2=b^2=1$, then
	\begin{align*}
		[[a,b],s_{j_l}] &= [abab, s_{j_l}]\\
		 &= ababa^{s_{j_l}} b^{s_{j_l}}a^{s_{j_l}}b^{s_{j_l}}  =   aa^{s_{j_l}}bb^{s_{j_l}}aa^{s_{j_l}} b b^{s_{j_l}} \\
		& = [a,s_{j_l}] [b,s_{j_l}] [a,s_{j_l}] [b,s_{j_l}] \\
		&= [a,s_{j_l}]^{-1} [b,s_{j_l}]^{-1} [a,s_{j_l}] [b,s_{j_l}] =[[a,s_{j_l}],[b,s_{j_l}]].
	\end{align*}
For $a\deq [i_1,\dots,i_{k-1}]$ and $b\deq [j_1,\dots,j_{l-1}]$, we have 
\begin{equation*}
		[[i_1,\dots,i_{k-1},j_l],[j_1,\dots,j_{l-1}, j_l ]]= [[i_1,\dots,i_{k-1}],[j_1,\dots,j_{l-1}], j_l ],
\end{equation*} 
	as required. 
  \item     An     iterated      use     of   items  \eqref{item:symmetry},
    \eqref{item:expunged} and \eqref{item:sameend} yields
    \begin{equation*}
      c=[[i_1,\dots,    i_s],   [j_1,    \dots,   j_v],
      h_1,\dots, h_t]
    \end{equation*} if $j_1 > i_{s+1} \ge h_1$, where
    the                                                   intersection
    $\Set{i_1,\dots,  i_s}\cap  \Set{j_1,  \dots,  j_v}=\emptyset$  is
    trivial, while,        if      $j_1=h_1=i_{s+1}$,      then
    $c=[[[i_1,\dots,     i_s,h_1],      h_1],\dots,     h_t]$.      By
    Fact~\ref{lem:expunge_new},             the             commutator
    $[[i_1,\dots, i_s,h_1],  h_1]$ is trivial,  and so $c=1$.   We may
    then    assume   that    $j_1    >   i_{s+1}    \ge   h_1$.     By
    \eqref{item:disjoint_comm},     we     obtain     the     equality
    $[[i_1,\dots,  i_s], [j_1,  \dots,  j_v]]=[i_1,\dots, i_s,  j_1]$,
    therefore  \begin{equation*}
      c=[i_1,\dots,   i_s,  j_1,  h_1,\dots,
      h_t]
    \end{equation*}
    as claimed. \qedhere
  \end{enumerate}
\end{proof}

A repeated application  of Lemma~\ref{lem:compound_commutators} shows
that every left-normed commutator  $[i_1,\dots,i_k]$ can be written as
a            commutator            $[j_1,\dots,j_l]$,            where
$\Set{j_1,\dots,j_l}         \subseteq\Set{i_1,\dots,i_k}$         and
$j_h \ge  j_{h+1}$ for all  $1\leq h  \leq l-1$. If  $j_h=j_{h+1}$ for
some     $h$,    then     Fact~\ref{lem:expunge_new}    shows     that
$[j_1,\dots,j_h,j_{h+1}]=1$, which in turn implies $[j_1,\dots,j_l]=1$.
This fact is summarized in the following result.

\begin{proposition}\label{prop:no_repetitions}
  Any  left-normed commutator  $[i_1,\dots,i_k]$ can  be written  as a
  rigid   commutator   $[j_1,\dots,j_l]$,   for  a   suitable   subset
  $\Set{j_1,\dots,j_l} \subseteq\Set{i_1,\dots,i_k}$.
\end{proposition}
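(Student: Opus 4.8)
The plan is to prove Proposition~\ref{prop:no_repetitions} by induction on the commutator length $k$, using Lemma~\ref{lem:compound_commutators} as the engine that pushes each step forward. The base cases $k=1$ and $k=2$ are immediate: a single $[i_1]$ is already rigid, and for $k=2$ either $i_1>i_2$ (rigid), $i_1=i_2$ (trivial by Fact~\ref{lem:expunge_new}, hence equal to the empty rigid commutator), or $i_1<i_2$ in which case item~\eqref{item:symmetry} of the Lemma lets me swap the two entries to get the rigid $[i_2,i_1]$.

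For the inductive step I would assume that every left-normed commutator of length less than $k$ can be rewritten as a rigid commutator indexed by a subset of its entry set, and then treat $[i_1,\dots,i_k]=[[i_1,\dots,i_{k-1}],s_{i_k}]$. By induction the inner commutator $[i_1,\dots,i_{k-1}]$ equals some rigid $[X]$ with $X\subseteq\Set{i_1,\dots,i_{k-1}}$; writing $[X]=[a_1,\dots,a_m]$ with $a_1>\dots>a_m$, the task reduces to rewriting $[[a_1,\dots,a_m],s_{i_k}]$, which is precisely the commutator of two rigid commutators (the second being the length-one rigid commutator $[i_k]$) to which Lemma~\ref{lem:compound_commutators} applies directly. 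The cases of the Lemma then cover every possibility for how $i_k$ sits relative to the $a_j$: if $i_k$ equals some $a_j$ the relevant items force either an expunging or triviality, and otherwise item~\eqref{item:general_commutator} (or item~\eqref{item:disjoint_comm} when $i_k$ is smaller than all the $a_j$) returns $c$ as an explicit rigid commutator whose index set is contained in $\Set{a_1,\dots,a_m,i_k}\subseteq\Set{i_1,\dots,i_k}$. In each branch the output is either trivial or a genuine rigid commutator indexed by a subset of the original entries, which is exactly what the proposition asserts.

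Rather than carving the induction into these many sub-branches, the cleaner route suggested by the paragraph preceding the statement is to argue in two stages. First, repeated application of the Lemma rewrites $[i_1,\dots,i_k]$ as a commutator $[j_1,\dots,j_l]$ with $\Set{j_1,\dots,j_l}\subseteq\Set{i_1,\dots,i_k}$ and with the entries in weakly decreasing order $j_1\ge j_2\ge\dots\ge j_l$; the ordering is achieved precisely because items~\eqref{item:symmetry}--\eqref{item:general_commutator} each either drop an entry or reposition it into a sorted prefix. Second, if any two consecutive entries coincide, say $j_h=j_{h+1}$, then Fact~\ref{lem:expunge_new} makes the length-two subword $[\,\cdot\,,j_h,j_{h+1}]$ trivial and hence collapses the whole commutator to $1=[\emptyset]$, which is itself a rigid commutator. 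What survives is a strictly decreasing sequence, i.e.\ a rigid commutator in the sense of Definition~\ref{def:rigid_commutators}.

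The main obstacle I anticipate is bookkeeping rather than any conceptual difficulty: I must verify that the rewriting process actually terminates and that the weakly decreasing ordering can always be reached. The termination follows because each application of the Lemma either strictly decreases the number of entries (items~\eqref{item:expunged} and \eqref{item:general_commutator} when the intersection is nonempty) or fixes the multiset of entries while moving toward the sorted arrangement, so a lexicographic measure on (length, number of out-of-order adjacent pairs) strictly decreases at every step. Care is needed to confirm that the subset condition $\Set{j_1,\dots,j_l}\subseteq\Set{i_1,\dots,i_k}$ is preserved throughout — but this is transparent from the statements of the Lemma, since no item ever introduces an index outside the union of the two input index sets. Once termination and the subset invariant are in hand, the collapse-on-repetition argument via Fact~\ref{lem:expunge_new} finishes the proof with no further computation.
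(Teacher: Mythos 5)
Your proposal is correct and takes essentially the same approach as the paper: the paper's own proof is precisely your second, ``cleaner route'' --- a repeated application of Lemma~\ref{lem:compound_commutators} to rewrite the commutator with weakly decreasing entries drawn from a subset of the original index set, followed by Fact~\ref{lem:expunge_new} to collapse any repeated adjacent entry to the identity. Your explicit induction on $k$ and the termination measure merely formalize what the paper compresses into the phrase ``repeated application''.
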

It is worth noticing here that rigid commutators are the images of
 P.~Hall's basic commutators~\cite{Hall1934} under the presentation
of the group  $\Sigma_n$ as a factor of the  $n$-generated free group,
once the order of the generators is reversed.

\subsection{Saturated subgroups}
In this section we give a representation of the elements of $\Sigma_n$
in terms of rigid commutators. 
\begin{lemma}\label{lem:basis}
  The set of all the rigid commutators $[X]\in \mathcal{R}$, where $X$
  varies among the subsets of $\Set{1,\dots,n}$ such that $\max(X)=i$,
  is a basis for $S_i$.
\end{lemma}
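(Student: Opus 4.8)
The plan is to prove the statement by matching dimensions and then establishing linear independence over $\mathbb{F}_2$. First, by Fact~\ref{fact1} every rigid commutator $[X]$ with $\max(X)=i$ lies in $S_i$, so the asserted set is contained in $S_i$. Since $S_i$ is elementary abelian, it is an $\mathbb{F}_2$-vector space, and I would compute its dimension from the wreath product description: $S_i=\Span{s_i}^{\Span{s_1,\dots,s_{i-1}}}$ is generated by the conjugates of $s_i$ under $\Span{s_1,\dots,s_{i-1}}$, and since this group acts as $\Sigma_{i-1}$ transitively on the $2^{i-1}$ prefixes of length $i-1$, these conjugates are $2^{i-1}$ pairwise disjoint involutions. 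Hence $\dim_{\mathbb{F}_2}S_i=2^{i-1}$. On the other hand, a subset $X$ with $\max(X)=i$ is exactly $\Set{i}\cup Y$ with $Y\subseteq\Set{1,\dots,i-1}$, so there are precisely $2^{i-1}$ rigid commutators based at $i$. As the two counts agree, it suffices to prove that the $[X]$ are linearly independent.

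To do so I would introduce explicit coordinates on $S_i$. An element $\sigma\in S_i$ is completely determined by specifying, for each prefix $p\in\mathcal{T}_{i-1}$, whether $\sigma$ flips the $i$-th letter of the words extending $p$; this yields an $\mathbb{F}_2$-linear isomorphism $\sigma\mapsto f_\sigma$ from $S_i$ onto the space $\mathbb{F}_2^{\mathcal{T}_{i-1}}$ of functions on prefixes, which I identify with subsets $P\subseteq\Set{1,\dots,i-1}$. Under this identification the disjoint conjugates of $s_i$ are exactly the indicator functions $\delta_P$, which form the standard basis; in particular $f_{s_i}=\delta_\emptyset$.

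The heart of the argument is the computation $f_{[X]}=\sum_{P\subseteq Y}\delta_P$ for $X=\Set{i}\cup Y$, which I would prove by induction on $\Size{Y}$. Writing $k=\min X$ and $Z=X\setminus\Set{k}$, one has $[X]=[[Z],s_k]=[Z]\,[Z]^{s_k}$ because $[Z]\in S_i$ has order dividing $2$; thus $f_{[X]}=f_{[Z]}+f_{[Z]^{s_k}}$. The key observation is that conjugation by $s_k$ precomposes the coordinate function with the action of $s_k$ on the prefixes, so $f_{[Z]^{s_k}}=f_{[Z]}\circ s_k$; since every $P$ occurring in the inductive expression for $f_{[Z]}$ is a subset of $Y\setminus\Set{k}\subseteq\Set{k+1,\dots,i-1}$, the permutation $s_k$ simply adjoins $k$ to each such $P$, i.e.\ $\delta_P\circ s_k=\delta_{P\cup\Set{k}}$. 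Combining the two sums gives $f_{[X]}=\sum_{P\subseteq Y\setminus\Set{k}}\delta_P+\sum_{k\in Q\subseteq Y}\delta_Q=\sum_{P\subseteq Y}\delta_P$, completing the induction.

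Finally, the transition matrix from the family $\Set{f_{[X]}}$ to the standard basis $\Set{\delta_P}$ is the incidence (zeta) matrix of the Boolean lattice, with entry $1$ exactly when $P\subseteq Y$. Ordering subsets compatibly with inclusion makes this matrix unitriangular, hence invertible over $\mathbb{F}_2$; therefore the $[X]$ are linearly independent and, by the dimension count, form a basis of $S_i$. I expect the main obstacle to be the inductive computation of $f_{[X]}$, and specifically verifying that conjugation by $s_k$ acts on the coordinate functions by precomposition with the prefix permutation and that this cleanly adjoins $k$ to the relevant subsets.
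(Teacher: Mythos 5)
Your proof is correct, but it takes a genuinely different route from the paper's. The paper never introduces explicit coordinates on $S_i$: it proves linear independence of the rigid commutators based at $i$ by backward induction on the index from which they hang, splitting a hypothetical dependency relation into the factors hanging from some $h>j$ and those hanging exactly from $j$, then killing both parts by a support argument (a non-trivial product of the second kind moves points $x>2^{n-j}$, which the first kind fixes) combined with the injectivity of the commutator map $[\,\cdot\,,s_j]\colon \Span{s_{j+1},\dots,s_n}\to\Span{s_j,\dots,s_n}$; the dimension count $2^{i-1}$ then finishes exactly as in your argument. You instead coordinatize $S_i$ by the $2^{i-1}$ pairwise disjoint conjugates of $s_i$, indexed by prefixes $P\subseteq\Set{1,\dots,i-1}$, and establish the closed formula $f_{[\Set{i}\cup Y]}=\sum_{P\subseteq Y}\delta_P$ by induction on $\Size{Y}$, concluding via unitriangularity of the Boolean zeta matrix over $\F_2$. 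Your inductive step is sound: since $[Z]\in S_i$ is an involution, $[X]=[Z]\,[Z]^{s_k}$; conjugation by $s_k$ does act on coordinate functions by precomposition with the prefix action; and since every $P$ in the expansion of $f_{[Z]}$ avoids $\Set{1,\dots,k}$, the prefix permutation $s_k$ sends $P$ to $P\cup\Set{k}$. What your approach buys is an explicit change of basis: you get the exact expansion of each rigid commutator in the natural basis of disjoint conjugates (and, by M\"obius inversion modulo $2$, of each conjugate as a product of rigid commutators), which is strictly more information than bare independence and could be reused elsewhere. What the paper's approach buys is economy of setup: it stays entirely inside the commutator calculus and the permutation action, at the price of leaning on the (asserted) injectivity of $[\,\cdot\,,s_j]$ and a support argument instead of exhibiting an invertible matrix.
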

\begin{proof}
  Let $1 \leq i \leq n$. To prove  the claim, we look at
  $S_i$  as   a  $2^{i-1}$-dimensional   vector  space   over  $\F_2$.
  Proceeding by backward  induction on $j$, for $i \geq  j \geq 1$, we
  show  that  the  set  of  all  the   rigid
  commutators based at $i$ and hanging  from $h$, for some {$h\ge j$},
  is linearly independent.  When $j=i$  there is nothing to prove. 
  Assume 
  \begin{equation}\label{eq:claim}
    \prod_{i>i_1>\dots > i_t\ge j}[i,i_1,\dots, i_t]^{e_{i,i_1,\dots, i_t}} = 1,
  \end{equation}
  where the exponents  are in $\F_2$.  We aim at  proving that all the
  exponents are $0$. From Eq.~\eqref{eq:claim} we have
  \begin{equation*}
    \prod_{i>i_1>\dots > i_t> j}[i,i_1,\dots, i_t]^{e_{i,i_1,\dots, i_t}}\prod_{i>i_1>\dots > i_{t-1} > i_t= j}[i,i_1,\dots, i_{t-1}, j]^{e_{i,i_1,\dots,i_{t-1}, j}} = 1,
  \end{equation*}
  and so
  \begin{multline}\label{eq:basis}
    \prod_{i>i_1>\dots > i_t> j}[i,i_1,\dots, i_t]^{e_{i,i_1,\dots, i_t}} = \\
    \left [\left(\prod_{i>i_1>\dots  > i_{t-1} >  i_t= j}[i,i_1,\dots,
        i_{t-1}]^{e_{i,i_1,\dots,i_{t-1}, j}}\right) ,j\right ] .
  \end{multline}
  Note   that   if   the    permutation   on   the   right-hand   side
  of~Eq.~\eqref{eq:basis} is non-trivial, then  it moves some $x$ with
  $x>2^{n-j}$, which is fixed by the one on the left-hand  side. Hence
   the  permutations  on both  sides  are  trivial.   By
  induction,    the   exponents    in    the    left-hand   side    of
  Eq.~\eqref{eq:basis} are all $0$.  Now, the commutator map
  \begin{equation*}
    [\,\cdot ,s_j]\colon \Span{s_{j+1},\dots,s_n} \to
    \Span{s_{j},\dots,s_n}
  \end{equation*}
  is injective, hence the equality
  \begin{equation*}
    \left [\left(\prod_{i>i_1>\dots  > i_{t-1} >  i_t= j}[i,i_1,\dots,
        i_{t-1}]^{e_{i,i_1,\dots,i_{t-1}, j}}\right) ,j\right ] =1
  \end{equation*}
  implies
  \begin{equation*}
    \prod_{i>i_1>\dots    >    i_{t-1}   >    i_t=    j}[i,i_1,\dots,
    i_{t-1}]^{e_{i,i_1,\dots,i_{t-1},  j}}=1.
  \end{equation*}  
  Again,     by     the      inductive     hypothesis,     we     find
  $e_{i,i_1,\dots,i_{t-1},     j}=0$    for     every    choice     of
  $ i_1 >\dots >i_{t-1}$.  As the number of rigid commutators based at
  $i$ equals the dimension of $S_i$, the proof is complete.
\end{proof}

We  can  now  state  our   first  main  result  as  a  straightforward
consequence  of Lemma~\ref{lem:basis}.   Let  us  call a  \emph{proper
  order}  $\prec$  on $\mathcal{R}^*$  any  total  order refining  the
partial order  defined by  $[i_1,\dots,i_t] \prec  [j_1,\dots,j_l]$ if
$i_1 <  j_1$. Here  we denote  by $\mathcal{P}_{n}$  the power  set of
$\Set{1,\dots,n}$.

\begin{theorem}\label{thm:unique_representation}
  Given  a  proper order  $\prec$  in  $\mathcal R^*$,  every  element
  $g\in \Sigma_n$ can be uniquely represented in the form
  \begin{equation*}
    g
    = \prod_{Y\in \mathcal{P}_{n}\setminus \Set{\emptyset}}[Y]^{e_{g}(Y)},
  \end{equation*}
  where  the  factors  are  ordered  with  respect   to  $\prec$  and
  $e_{g} \colon \mathcal{P}_{n}\setminus\Set{\emptyset} \to \Set{0,1}$
  is a function depending on $g$.
\end{theorem}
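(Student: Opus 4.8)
The plan is to reduce the statement to two facts already at our disposal: the internal semidirect decomposition $\Sigma_n = S_1 \ltimes \dots \ltimes S_n$ recalled in Section~\ref{sec:prel}, and the basis description of each factor $S_i$ furnished by Lemma~\ref{lem:basis}. First I would use the semidirect product structure to factor an arbitrary $g\in\Sigma_n$. Since $S_n$ is normalized by every $S_j$ with $j\le n$, it is normal in $\Sigma_n$ and $\Sigma_n=\Sigma_{n-1}\ltimes S_n$; hence $g$ can be written uniquely as $g=h\,g_n$ with $h\in\Sigma_{n-1}$ and $g_n\in S_n$. Iterating this, or equivalently arguing by induction on $n$, yields a unique factorization
\begin{equation*}
  g = g_1 g_2 \cdots g_n, \qquad g_i \in S_i,
\end{equation*}
in which the factor lying in the base $S_i$ occupies position $i$, so that the factors are listed by increasing index.

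Next I would coordinatize each $g_i$. By Lemma~\ref{lem:basis}, the rigid commutators $[X]$ with $\max(X)=i$ form a basis of $S_i$ as a vector space over $\F_2$; since $S_i$ is elementary abelian, this means that $g_i$ is the product of exactly those basis elements indexed by a uniquely determined family of subsets $X$, with exponents in $\Set{0,1}$. Writing $e_g(X)\in\Set{0,1}$ for these coordinates, we obtain
\begin{equation*}
  g_i = \prod_{\max(X)=i} [X]^{e_g(X)},
\end{equation*}
and the value of this product does not depend on the arrangement of its factors, because the elements of the abelian group $S_i$ commute (cf.\ Fact~\ref{fact2}). Substituting these expressions into the factorization of $g$, and recalling that every nonempty $Y\subseteq\Set{1,\dots,n}$ has a unique maximum and is therefore accounted for exactly once, produces the desired formula
\begin{equation*}
  g = \prod_{Y\in\mathcal{P}_{n}\setminus\Set{\emptyset}} [Y]^{e_g(Y)}.
\end{equation*}

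It remains to reconcile this product with a prescribed proper order $\prec$, and this is the step I expect to require the most care. By definition, $\prec$ refines the relation $[X]\prec[Y]$ whenever $\max(X)<\max(Y)$, so $\prec$ lists all rigid commutators based at $1$ first, then all those based at $2$, and so on. This is precisely the grouping induced by the factorization $g=g_1\cdots g_n$: the block of $\prec$-consecutive factors based at $i$ reconstitutes $g_i\in S_i$. The only freedom left by $\prec$ is the internal order of the factors sharing a common base $i$, and, as noted above, this freedom is harmless exactly because $S_i$ is abelian. Finally, uniqueness of the representation is inherited from the two sources of uniqueness already invoked: the factorization $g=g_1\cdots g_n$ is unique by the semidirect product structure, and each $g_i$ has unique coordinates $e_g(X)$ relative to the basis of $S_i$ given by Lemma~\ref{lem:basis}. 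A dimension count, $\Size{\mathcal{P}_{n}\setminus\Set{\emptyset}} = 2^n-1 = \sum_{i=1}^n 2^{i-1}$, which equals $\sum_{i=1}^n \dim_{\F_2} S_i$, confirms that the indexing sets match up perfectly, with no subset used twice and none omitted.
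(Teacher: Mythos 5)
Your proposal is correct and follows exactly the same route as the paper, whose proof consists of the single observation that the claim is a straightforward consequence of the decomposition $\Sigma_n = S_1 \ltimes \dots \ltimes S_n$ together with Lemma~\ref{lem:basis}. You have merely spelled out the details (unique factorization $g = g_1\cdots g_n$, coordinatization of each $g_i$ in the basis of $S_i$, and compatibility with any proper order via commutativity within each $S_i$) that the paper leaves implicit.
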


\begin{proof}
  Since $\Sigma_n  = S_1 \ltimes  \dots \ltimes  S_n$, the claim  is a
  straightforward consequence of Lemma~\ref{lem:basis}.
\end{proof}
Some of the following corollaries  are straightforward and their proof
will be omitted.
\begin{corollary}\label{cor:size_subgroup}
  If $G$  is a  subgroup of $\Sigma_n$  containing $k$  distinct rigid
  commutators, then $\Size{G}\ge 2^k$.
\end{corollary}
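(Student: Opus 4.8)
The plan is to derive this directly from the unique factorisation in Theorem~\ref{thm:unique_representation}. Denote by $[X_1],\dots,[X_k]$ the $k$ distinct nontrivial rigid commutators contained in $G$, where each $X_i$ is a nonempty subset of $\Set{1,\dots,n}$. The goal is to exhibit $2^k$ pairwise distinct elements of $G$; since $G$ is a subgroup and each $[X_i]\in G$, it suffices to build these elements as products of the $[X_i]$.

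For each subset $S\subseteq\Set{X_1,\dots,X_k}$, I would form the element
\[
  g_S \deq \prod_{X\in S}[X],
\]
where the factors are listed in increasing $\prec$-order. The key observation is that $g_S$ is \emph{already} written in the canonical form of Theorem~\ref{thm:unique_representation}: it is the $\prec$-ordered product $\prod_{Y}[Y]^{e(Y)}$ whose exponent function $e$ is the indicator of $S$, that is, $e(Y)=1$ exactly when $Y\in S$ and $e(Y)=0$ otherwise, the omitted factors carrying exponent $0$. No further reduction is needed, because a product of rigid commutators taken in $\prec$-order is precisely a canonical word.

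By the uniqueness part of Theorem~\ref{thm:unique_representation}, the assignment $S\mapsto g_S$ is injective: distinct subsets $S$ give canonical words with distinct exponent functions, hence distinct group elements. As $\Set{X_1,\dots,X_k}$ has $2^k$ subsets and every $g_S$ lies in $G$, we conclude that $\Size{G}\ge 2^k$.

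The argument is short, and the only point that genuinely requires care is the middle step, namely the identification of $g_S$ with the canonical representative whose exponent function is $\mathbb{1}_S$. This is exactly where the precise shape of Theorem~\ref{thm:unique_representation} is used: because the canonical form is literally the $\prec$-ordered product over all nonempty subsets with $\Set{0,1}$ exponents, selecting a subset of factors and discarding the rest yields another canonical word, rather than something that would first have to be rewritten through the commutator machinery of Lemma~\ref{lem:compound_commutators}. Once this identification is granted, the injectivity of $S\mapsto g_S$ and the lower bound on $\Size{G}$ follow immediately.
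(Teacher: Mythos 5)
Your proof is correct and follows exactly the route the paper intends: the corollary appears in the paper with its proof omitted as a ``straightforward'' consequence of Theorem~\ref{thm:unique_representation}, and your argument---forming the $2^k$ $\prec$-ordered subset products and invoking uniqueness of the canonical form to see they are pairwise distinct elements of $G$---is precisely that omitted argument, spelled out.
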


We now need a new concept which  plays a key role in the remainder of
this work.
\begin{definition}
  A subset  $\mathcal{G}$ of $\mathcal{R}$ is  called \emph{saturated}
  if  $\mathcal{G}\cup  \Set{[\emptyset]}$   is  closed  under  taking
  commutators and  the subgroup  $G\deq\Span{\mathcal{G}}\le \Sigma_n$
  is called a \emph{saturated subgroup}.
\end{definition}
\begin{remark}\label{rem:sturated_generation}
  A subgroup  $G\le \Sigma_n$ is  saturated if and  only if it  can be
  generated by some subset  $\mathcal{X}$ of $\mathcal{R}$: indeed $G$
  is   also   generated   by   the  smallest   saturated   subset   of
  $\mathcal{G}\cap \mathcal{R}$ containing $\mathcal{X}$.
\end{remark}
\begin{corollary}\label{cor:saturated_subgroups}
  Let $G\le \Sigma_n$ be a saturated subgroup generated by a saturated
  set $\mathcal{G}\subseteq  \mathcal{R}^*$ and let $\prec$ be any given
  proper order on  $\mathcal{G}$. Every element $g\in G$  has a unique
  representation
  \begin{equation*}
    g=\prod_{c\in \mathcal{G}}c^{e_c(g)},
  \end{equation*}
  where the  commutators in  the product are  ordered with  respect to
  $\prec$     and     $e_c(g)\in    \Set{0,1}$.      In     particular
  $\Size{G}=2^{\Size{\mathcal{G}}}$.
\end{corollary}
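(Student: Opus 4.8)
The plan is to separate the statement into its uniqueness part, which is essentially immediate, and its existence part together with the order formula, which is the real content; I would attack the latter by induction on $n$, not by a naive collection argument. For uniqueness I would argue directly from Theorem~\ref{thm:unique_representation}: a representation $g=\prod_{c\in\mathcal{G}}c^{e_c(g)}$ is precisely the representation of $g$ furnished by Theorem~\ref{thm:unique_representation} in which the exponents $e_g(Y)$ have been set to $0$ for every $Y$ not indexing an element of $\mathcal{G}$ (the discarded factors are trivial and the $\prec$-order is preserved). Since that representation is unique, the exponents $e_c(g)$ are determined by $g$, so the map $\Phi\colon\{0,1\}^{\mathcal{G}}\to\Sigma_n$ sending $(e_c)_c$ to the $\prec$-ordered product $\prod_{c\in\mathcal{G}}c^{e_c}$ is injective. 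Writing $H$ for the image of $\Phi$, this gives $H\subseteq G$ and $\Size{H}=2^{\Size{\mathcal{G}}}$, hence $\Size{G}\ge 2^{\Size{\mathcal{G}}}$ (in agreement with Corollary~\ref{cor:size_subgroup}). It then suffices to prove the reverse inequality $\Size{G}\le 2^{\Size{\mathcal{G}}}$: by finiteness $H=G$ follows, which simultaneously yields the existence of the representation and the order formula.

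For the bound $\Size{G}\le 2^{\Size{\mathcal{G}}}$ I would induct on $n$, using the splitting $\Sigma_n=\Sigma_{n-1}\ltimes S_n$. Partition $\mathcal{G}=\mathcal{G}_n\sqcup\mathcal{G}_{<n}$ according to whether the base $\max X$ of $[X]$ equals $n$ or is smaller, and set $M\deq\Span{\mathcal{G}_n}$ and $U\deq\Span{\mathcal{G}_{<n}}$. Three facts drive the induction. First, the elements of $\mathcal{G}_n$ are distinct rigid commutators based at $n$, hence by Lemma~\ref{lem:basis} linearly independent in the elementary abelian group $S_n$; as they pairwise commute by Fact~\ref{fact2}, they form a basis of $M$ and $\Size{M}=2^{\Size{\mathcal{G}_n}}$. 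Second, $M\isnorm G$: for $c\in\mathcal{G}_n$ and any generator $d\in\mathcal{G}$ one computes $c^d=c\,[c,d]$, and $[c,d]$ is a commutator of two elements of $\mathcal{G}$, so by saturation it lies in $\mathcal{G}\cup\Set{[\emptyset]}$; having the same base $n$ as $c$, it lies in $\mathcal{G}_n\cup\Set{[\emptyset]}\subseteq M$, whence $c^d\in M$ and every generator normalizes $M$. Third, the commutators in $\mathcal{G}_{<n}$ involve only $s_1,\dots,s_{n-1}$, so $\mathcal{G}_{<n}$ is a set of rigid commutators of $\Sigma_{n-1}$, and it is again saturated there: a commutator of two of its elements is computed identically in $\Sigma_{n-1}$ and in $\Sigma_n$, and by saturation of $\mathcal{G}$ it lies in $\mathcal{G}\cup\Set{[\emptyset]}$ and is based below $n$, hence in $\mathcal{G}_{<n}\cup\Set{[\emptyset]}$. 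The inductive hypothesis applied in $\Sigma_{n-1}$ then gives $\Size{U}=2^{\Size{\mathcal{G}_{<n}}}$.

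To close the induction, since $M\isnorm G$ the product $MU$ is a subgroup of $G$; it contains $\mathcal{G}_n\cup\mathcal{G}_{<n}=\mathcal{G}$, so $MU=G$. Therefore $\Size{G}=\Size{MU}\le\Size{M}\,\Size{U}=2^{\Size{\mathcal{G}_n}}2^{\Size{\mathcal{G}_{<n}}}=2^{\Size{\mathcal{G}}}$, with base case $n=1$ (or $\mathcal{G}=\emptyset$) immediate. Combined with the first paragraph this yields $\Size{G}=2^{\Size{\mathcal{G}}}=\Size{H}$, and since $H\subseteq G$ we conclude $H=G$: every $g\in G$ admits the asserted $\prec$-ordered representation with exponents in $\Set{0,1}$, unique by the uniqueness argument, and $\Size{G}=2^{\Size{\mathcal{G}}}$.

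The step I expect to be most delicate is exactly this upper bound. A direct collection argument — repeatedly rewriting an out-of-order adjacent pair $d\,c$ as $c\,d\,[c,d]$, using that $[c,d]\in\mathcal{G}\cup\Set{[\emptyset]}$ by saturation — is tempting, but the freshly created factor $[c,d]$ sits at the same base level as $d$ and can re-introduce disorder, so a naive inversion count need not decrease and termination is unclear. Routing the cardinality bound through the normal subgroup $M=\Span{\mathcal{G}_n}$ and the quotient $\Sigma_{n-1}$ avoids this difficulty altogether; the two points genuinely requiring care are the normality of $M$ (which is precisely where saturation enters, via $c^d=c\,[c,d]\in M$) and the persistence of saturation on passing to $\mathcal{G}_{<n}$ inside $\Sigma_{n-1}$.
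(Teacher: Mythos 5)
Your proof is correct. A line-by-line comparison with the paper is impossible here, because the paper omits the proof of this corollary as ``straightforward''; its intended route is nevertheless visible in the proof of Corollary~\ref{cor:homogeneus_components}, where for a saturated $G$ the decomposition $G=(G\cap S_1)\ltimes \dots \ltimes (G\cap S_n)$ is simply asserted and then combined with Lemma~\ref{lem:basis} and Theorem~\ref{thm:unique_representation}. You use the same ingredients (uniqueness from Theorem~\ref{thm:unique_representation}, linear independence from Lemma~\ref{lem:basis}, closure of $\mathcal{G}$ under commutation) but organize the counting differently: instead of collecting all layers at once, you peel off the top layer, proving $M=\Span{\mathcal{G}_n}\isnorm G$ from saturation via $c^d=c[c,d]$, checking that $\mathcal{G}_{<n}$ remains saturated inside $\Sigma_{n-1}$, and concluding $G=MU$ with $\Size{G}\le\Size{M}\,\Size{U}$ by induction on $n$. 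This buys genuine rigor: your diagnosis of the naive collection argument is accurate --- the fresh factor $[c,d]$ appears at the base level $\max$ of the two bases and can create new inversions with factors to its right, so termination of collection needs nilpotency/weight considerations that the paper never spells out --- and routing the bound through the normal subgroup $M$ sidesteps this entirely, which is precisely the justification the paper leaves implicit. Two small points deserve explicit mention in a final write-up, though neither is a gap. First, to invoke Theorem~\ref{thm:unique_representation} for uniqueness you need a proper order on all of $\mathcal{R}^*$ whose restriction to $\mathcal{G}$ is $\prec$; such an extension exists because $\prec$ refines the base order on $\mathcal{G}$, so the union of $\prec$ with the base partial order on $\mathcal{R}^*$ is acyclic and extends to a total order. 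Second, in the normality argument you assert that a nontrivial $[c,d]$ with $c$ based at $n$ is again based at $n$: this follows from Fact~\ref{fact1} (the commutator lies in $S_n$) together with Proposition~\ref{prop:no_repetitions} and the observation that a nontrivial rigid commutator based at $i<n$ lies in $S_i$, which meets $S_n$ trivially in the semidirect decomposition of $\Sigma_n$.
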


\begin{corollary}\label{cor:homogeneus_components}
  Let $G\le \Sigma_n$ be a saturated subgroup generated by a saturated
  set $\mathcal{G}\subseteq  \mathcal{R}^*$ and let $\prec$  any given
  proper order on $\mathcal{G}$.  If the product $c_1\cdots c_k\in G$,
  where             $c_i\in            \mathcal{R}^*$,             and
  $c_1\precneqq   c_{2}   \precneqq   \dots   \precneqq   c_k$,   then
  $c_i\in \mathcal{G}$ for all $1 \leq i \leq k$.
\end{corollary}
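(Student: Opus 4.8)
The plan is to exploit the tension between two uniqueness statements: the global factorization of Theorem~\ref{thm:unique_representation}, valid throughout $\Sigma_n$, and the factorization of Corollary~\ref{cor:saturated_subgroups}, which expresses elements of $G$ using only the generators in $\mathcal{G}$. I would first fix, once and for all, a proper order $\prec$ on $\mathcal{R}^*$ whose restriction to $\mathcal{G}$ is the given proper order; this is possible because the defining condition of a proper order constrains comparisons only through the base index $i_1$, so any proper order on $\mathcal{G}$ extends to one on $\mathcal{R}^*$. Set $g\deq c_1\cdots c_k$.

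Next I would observe that the hypothesis already presents $g$ in the form prescribed by Theorem~\ref{thm:unique_representation}. Writing $c_i=[X_i]$ with $X_i\in\mathcal{P}_n\setminus\Set{\emptyset}$, the relation $c_1\precneqq\cdots\precneqq c_k$ forces the $X_i$ to be pairwise distinct and $\prec$-increasing. Hence, putting $e_g(X_i)\deq 1$ and $e_g(Y)\deq 0$ for every other nonempty $Y$, the product $c_1\cdots c_k$ is exactly the (unique) representation of $g$ guaranteed by Theorem~\ref{thm:unique_representation}.

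Then, since $g\in G$, Corollary~\ref{cor:saturated_subgroups} supplies a representation $g=\prod_{c\in\mathcal{G}}c^{e_c(g)}$ with the factors ordered by $\prec$ and $e_c(g)\in\Set{0,1}$. Discarding the trivial factors, this is a product of pairwise distinct elements of $\mathcal{G}\subseteq\mathcal{R}^*$, strictly increasing with respect to $\prec$, each occurring to the first power; in other words it is again a representation of $g$ in the precise form of Theorem~\ref{thm:unique_representation}. The uniqueness clause of that theorem then forces the two lists of factors to coincide as sets, so $\Set{c_1,\dots,c_k}=\Set{c\in\mathcal{G}\mid e_c(g)=1}\subseteq\mathcal{G}$, giving $c_i\in\mathcal{G}$ for every $1\le i\le k$.

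The argument is essentially a bookkeeping one, and I do not expect a serious obstacle. The only point requiring care is the compatibility of orderings: one must ensure that a single order $\prec$ can serve simultaneously in both uniqueness statements, and that the representation produced by Corollary~\ref{cor:saturated_subgroups}, after deleting the factors with exponent $0$, genuinely meets the hypotheses of Theorem~\ref{thm:unique_representation} (distinct rigid commutators, $\prec$-increasing, exponents in $\Set{0,1}$). Once this is in place, the uniqueness of the representation does all the work.
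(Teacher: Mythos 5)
Your proof is correct and takes essentially the same route as the paper's: the paper also writes the element in two ways, as the given product $c_1\cdots c_k$ and as an ordered product of elements of $\mathcal{G}$ (using the decomposition $G=(G\cap S_1)\ltimes\dots\ltimes(G\cap S_n)$, which is the content of Corollary~\ref{cor:saturated_subgroups}), and then concludes by the uniqueness in Theorem~\ref{thm:unique_representation}. Your explicit extension of the proper order from $\mathcal{G}$ to all of $\mathcal{R}^*$ is a point of care the paper silently skips, but it is a refinement of the same argument, not a different one.
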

\begin{proof}
  Note that  since every rigid  commutator belongs to some  $S_i$, the
  group    $G$    has    the    semidirect    product    decomposition
  $G=(G\cap  S_1)\ltimes \dots  \ltimes  (G\cap  S_n)$. In  particular
  every  element of  $G$  can  be written  as  an  ordered product  of
  elements  of  $\mathcal{G}$.   Write $c_1\cdots  c_k=g_1\cdots  g_t$
  where $g_i\in  \mathcal{G}$ and $g_1\precneqq \dots  \precneqq g_t$.
  By  Theorem~\ref{thm:unique_representation},   we  have   $k=t$  and
  $c_i=g_i\in \mathcal{G}$.
\end{proof}

The      next       statement      follows       immediately      from
Corollary~\ref{cor:homogeneus_components}.
\begin{corollary}\label{cor:homogeneus_components2}
  Let $G\le \Sigma_n$ be a  saturated subgroup.  If $g=g_1\cdots g_n$,
  where $g_i\in S_i$ for  $1\leq i \leq n$, then $g\in  G$ if and only
  if  $g_i\in  G\cap  S_i$  for  $1  \leq  i  \leq  n$.   Moreover  if
  $g=h_1\cdots h_n$,  where $h_i\in S_i$ for  $1 \leq i \leq  n$, then
  $h_i=g_i$ for $1 \leq i \leq n$.
\end{corollary}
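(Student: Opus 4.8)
The plan is to read both assertions off the unique rigid-commutator representation, applying Corollary~\ref{cor:homogeneus_components} to a product assembled from the given factors. The \emph{if} implication needs no work: if $g_i\in G\cap S_i\subseteq G$ for every $i$, then $g=g_1\cdots g_n$ is a product of elements of the subgroup $G$, so $g\in G$. The \emph{Moreover} clause is equally immediate, since the decomposition $\Sigma_n=S_1\ltimes\dots\ltimes S_n$ forces the expression of any element as a product of one factor from each $S_i$ to be unique. Thus only the \emph{only if} implication carries content, and this is where I would invoke Corollary~\ref{cor:homogeneus_components}.

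First I would expand each given factor in the basis of $S_i$ furnished by Lemma~\ref{lem:basis}, writing $g_i=\prod_{\max X=i}[X]^{e_i(X)}$ with exponents $e_i(X)\in\Set{0,1}$; this is legitimate precisely because $S_i$ is an elementary abelian $2$-group whose dimension equals the number of rigid commutators based at $i$. Substituting these into $g=g_1\cdots g_n$ yields $g$ as a product of rigid commutators in which the block arising from $g_i$ consists exactly of the commutators based at $i$. Since $S_i$ is abelian, the factors within a single block commute and may be sorted by a fixed proper order $\prec$; since $\prec$ places every commutator based at $i$ before every one based at a larger index, the blocks are already in the correct relative order. Discarding the factors with zero exponent, I obtain $g=c_1\cdots c_k$ with the $c_j\in\mathcal{R}^*$ pairwise distinct and $c_1\precneqq\dots\precneqq c_k$.

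Now $g\in G$, so Corollary~\ref{cor:homogeneus_components} applies to $c_1\cdots c_k$ and gives $c_j\in\mathcal{G}$ for every $j$. Fixing an index $i$, the factors $c_j$ based at $i$ are exactly those $[X]$ with $\max X=i$ and $e_i(X)=1$, and by construction their product equals $g_i$. As each such $c_j$ lies in $\mathcal{G}\subseteq G$, the element $g_i$ is a product of elements of $G$ and hence $g_i\in G$; combined with $g_i\in S_i$ this gives $g_i\in G\cap S_i$, completing the \emph{only if} direction.

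The only delicate point, which is bookkeeping rather than a genuine difficulty, is to verify that assembling the basis expansions of the $g_i$ really does produce a product that is already $\prec$-increasing with distinct factors, so that Corollary~\ref{cor:homogeneus_components} is applicable without any reordering across blocks. This rests on two facts established earlier: within a fixed $S_i$ the factors commute and so may be freely sorted, while a proper order separates the blocks according to their base index, so that no reordering mixing distinct $S_i$ is ever required.
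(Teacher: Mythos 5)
Your proposal is correct and takes essentially the same route as the paper: the paper gives no written proof beyond the remark that the statement ``follows immediately from Corollary~\ref{cor:homogeneus_components}'', and your argument---expanding each $g_i$ in the rigid-commutator basis of $S_i$ from Lemma~\ref{lem:basis}, noting that a proper order $\prec$ automatically places the block based at $i$ before the block based at $j$ for $i<j$, and then invoking Corollary~\ref{cor:homogeneus_components}---is exactly the intended fleshing-out of that one-line proof. Your dispatch of the \emph{if} direction and of the uniqueness clause via the semidirect decomposition $\Sigma_n=S_1\ltimes\dots\ltimes S_n$ is likewise the intended reading.
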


\section{Elementary  abelian regular  $2$-groups  and  their chain  of
  normalizers}\label{sec:main}

A vector  space $T$ of dimension  $n$ over $\F_2$ acts  regularly over
itself as a group  of translations. By way of this  action, $T$ can be
seen as a regular elementary  abelian subgroup of $\Sym(2^n)$, and any
other regular elementary abelian  subgroup of $\Sym(2^n)$ is conjugate
to  $T$ in  $\Sym(2^n)$~\cite{Dixon1971}.   The normalizer  of $T$  in
$\Sym(2^n)$ is  the affine  group $\AGL(T)$, where  $T$ embeds  as the
normal subgroup of translations.  For this  reason, we refer to any of
the conjugates of $T$ as a \emph{translation subgroup} of $\Sym(2^n)$.
Every  chief  series  $\mathfrak{F}=\Set{T_i}_{i=0}^n$ of  $T$,  where
$1=T_0 <  T_1 <  \dots <  T_n=T$, is normalized  by exactly  one Sylow
$2$-subgroup  $U_{\mathfrak{F}}$ of  $\AGL(T)$.   In \cite[Theorem  p.
226]{Leinen1988} it  is proved that every  chief series $\mathfrak{F}$
of $T$ corresponds to  a Sylow $2$-subgroup $\Sigma_{\mathfrak{F}}$ of
$\Sym(2^n)$ containing $T$  and having a chief  series that intersects
$T$        in        $\mathfrak{F}$.         The        correspondence
$\mathfrak{F}\mapsto \Sigma_\mathfrak{F}$  is a bijection  between the
sets of the chief series of $T$  and the set of Sylow $2$-subgroups of
$\Sym(2^n)$ containing $T$.  In  \cite{Aragona2020} it is also pointed
out                                                               that
$U_{\mathfrak{F}}=N_{\Sigma_{\mathfrak{F}}}(T)=\Sigma_{\mathfrak{F}}\cap
\AGL(T)$. From now  on the chief series $\mathfrak{F}$  will be fixed,
and   so,   without   ambiguity,   we  will   write   $\Sigma_n$   and
$U_n$      to     denote      respectively
$\Sigma_{\mathfrak{F}}$ and $U_{\mathfrak{F}}$.  In \cite{Aragona2019}
it is  proved that  $U_n$ contains, as  normal subgroups,  exactly two
conjugates  of   $T$,  namely  $T$  and   $T_{U_n}=T^{g}$,  {for  some
  $g  \in  \Sym(2^n)$}.    It  is  also  shown   that  the  normalizer
$N_n^1=N_{\Sym(2^n)}(U_n)$  interchanges  by   conjugation  these  two
subgroups and that $N_n^1$ contains  $U_n$ as a subgroup of index~$2$.
In particular, $N_n^1\le \Sigma_n$.  In  the following section we will
extend  these results  on  $T,  U_n, N_n^1$  to  the  entire chain  of
normalizers, which is defined below.
\subsection{The normalizer chain}
The \emph{normalizer chain starting at $T$} is defined as
\begin{equation}\label{eq:normalizer_chain}
  N_n^i \deq \begin{cases}
    U_n = N_{\Sigma_n}(T)& \text{if $i=0$},\\
    N_{\Sigma_n}(N^{i-1}_{n}) & \text{if $i\ge 1$.}
  \end{cases}
\end{equation}
In      \cite{Aragona2020}      the      authors      proved      that
$N_{\Sigma_n}(N_{n}^i)=N_{\Sym(2^n)}(N_{n}^i)$, for all  $i\ge 0$, 
computed the  normalizer chain for $n  \le 11$ by way  of the computer
algebra package \textsf{GAP} \cite{GAP4}, and 
conjectured  that the  index $\Size{N^{i+1}_n  : N_n^{i}}$  does not
depend on $n$ for $n\ge i+3$~\cite[Conjecture 1]{Aragona2020}.  In this
section we
prove this conjecture arguing by induction,  by  means of  the  rigid
commutator machinery developed  in Section~\ref{subsec:rcm}.  We start
by defining
\begin{equation*}
  T\deq\Span{t_1,\dots, t_n}
\end{equation*}
where
\begin{equation*}
  t_i\deq[s_i,s_{i-1},\dots,s_1]=[i,i-1,\dots,1] \in \mathcal{R}^*.
\end{equation*}

\begin{lemma}\label{lem:translation_subgroup}
  $T$ is  an elementary  abelian regular  subgroup of  $\Sigma_n$.  In
  particular, $T$ is a translation subgroup of $\Sym(2^n)$.
\end{lemma}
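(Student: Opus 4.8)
The plan is to establish, in turn, that $T$ is abelian, that it has exponent two, that it has order $2^n$, and finally that it is transitive; regularity then follows because $\Size{T}=2^n$ is exactly the number of points, and the concluding assertion is a consequence of the classification of regular elementary abelian subgroups recalled at the beginning of this section. First, since the largest index appearing in $t_i=[i,i-1,\dots,1]$ is $i$, Fact~\ref{fact1} gives $t_i\in S_i$, and because $S_i$ is elementary abelian each $t_i$ is an involution. For $i>j$ the base index $j$ of $t_j$ lies in the index set $\Set{1,\dots,i}$ of $t_i$, so the final clause of Lemma~\ref{lem:compound_commutators}\,\eqref{item:general_commutator} yields $[t_i,t_j]=1$; thus $T$ is generated by commuting involutions and is elementary abelian. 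Moreover $\Set{t_1,\dots,t_n}$ is a set of $n$ distinct rigid commutators whose pairwise commutators are all trivial, hence a saturated set, and Corollary~\ref{cor:saturated_subgroups} gives $\Size{T}=2^n$.

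It then remains to prove transitivity, for which it suffices to show that the stabiliser in $T$ of the all-zero word is trivial, and this will follow once the action of each generator is identified. I claim that $t_i$ flips the $i$-th letter of every word and fixes all the others, so that $T$ is exactly the group of coordinate translations on $\mathcal{T}_n\cong\F_2^n$. To prove the claim, put $c_r\deq[s_i,s_{i-1},\dots,s_r]$ for $1\le r\le i$, so that $c_i=s_i$ and $c_r=[c_{r+1},s_r]$, and argue by downward induction on $r$ that $c_r$ flips the $i$-th letter exactly when $w_1=\dots=w_{r-1}=0$ and fixes every other letter. The base case $r=i$ is precisely the definition of $s_i$ in~\eqref{eq:generators}. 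For the inductive step, note that $c_{r+1}\in S_i$ is an involution, so $c_r=c_{r+1}\,c_{r+1}^{\,s_r}$; tracking the $i$-th letter through the two factors, and using that $s_r$ toggles the $r$-th letter exactly when $w_1=\dots=w_{r-1}=0$ while fixing the letters of smaller index, one finds that the flip condition of $c_r$ is the symmetric difference of the flip condition of $c_{r+1}$ evaluated at $w$ and at $ws_r$, which simplifies to $w_1=\dots=w_{r-1}=0$. A parallel check shows the two occurrences of $s_r$ cancel on the $r$-th letter, so that $c_r$ fixes all letters other than the $i$-th. Setting $r=1$ gives that $t_i=c_1$ flips the $i$-th letter unconditionally.

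With this description the generators $t_1,\dots,t_n$ are the $n$ coordinate translations, so $T$ acts transitively, and hence regularly, on $\mathcal{T}_n$. In particular $T$ is a regular elementary abelian subgroup of $\Sym(2^n)$, and by the discussion at the beginning of this section every such subgroup is a translation subgroup, which settles the final claim. I expect the main obstacle to be exactly this inductive identification of the action of $t_i$: the rigid commutator machinery of Section~\ref{subsec:rcm} disposes of the algebraic properties at once, but translating these into the geometry of the action on the tree—showing that successively commuting with $s_{i-1},\dots,s_1$ erodes the constraint on the leading letters until the flip of the $i$-th letter becomes unconditional—requires the careful bookkeeping sketched above.
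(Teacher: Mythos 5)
Your proof is correct, and its skeleton matches the paper's: commutativity of the $t_i$ via the final clause of item~\ref{item:general_commutator} of Lemma~\ref{lem:compound_commutators}, exponent $2$ from $t_i\in S_i$, and then transitivity combined with an order count to get regularity. You diverge in two places, both soundly. For the order, the paper only records $\Size{T}\le 2^n$ at first and recovers equality at the end from the existence of an orbit of size $2^n$, whereas you observe that $\Set{t_1,\dots,t_n}$ is a saturated set (all pairwise commutators are trivial) and invoke Corollary~\ref{cor:saturated_subgroups} to get $\Size{T}=2^n$ outright; this decouples the order computation from transitivity, at the price of leaning on the unique-factorization machinery where the paper uses only elementary counting. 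For transitivity, the paper compresses the key verification into the sentence ``a direct check shows that $t=\prod t_i^{w_i}$ moves $1$ to $x$,'' while you prove the stronger statement that each $t_i$ flips the $i$-th letter of every word and fixes all the others, by downward induction on $r$ in $c_r=[s_i,s_{i-1},\dots,s_r]$, writing $c_r=c_{r+1}c_{r+1}^{s_r}$ and computing the flip condition as a symmetric difference. Your case analysis is right: when $w_1=\dots=w_{r-1}=0$, exactly one of $w$ and $(w)s_r$ satisfies the flip condition of $c_{r+1}$, and otherwise neither does, so the condition erodes to $w_1=\dots=w_{r-1}=0$ and at $r=1$ becomes vacuous. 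This supplies exactly the bookkeeping that the paper's ``direct check'' leaves to the reader, and it yields a cleaner endpoint — the identification of $T$ with the full group of coordinate translations of $\F_2^n$ — rather than just the orbit of a single point.
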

\begin{proof}
  $T$  is a  subgroup of  $\Sigma_n$ as  it is  generated by  elements
  belonging to  $\Sigma_n$.  By item  \ref{item:general_commutator} of
  Lemma~\ref{lem:compound_commutators} it  follows that $[t_i,t_j]=1$,
  so that $T$ is abelian.  Note that $t_i^2=1$ as $t_i\in S_i$, and so
  $T$ is elementary abelian of order  at most $2^n$.  Let us now prove
  that  $T$ is  transitive.   Let $1  \le  x \le  2^n$  be an  integer
  represented as $x= 1+\sum_{i=1}^{n}2^{n-i} w_{i}$ in binary form and
  let $t=\prod_{i=1}^nt_i^{w_i}$.  A direct check shows that $t$ moves
  $1$ to $x$.  Since  $T$ has an orbit with $2^n$  elements and it has
  order at most  $2^n$, it follows that $\Size{T}=2^n$  and that every
  point stabilizer is trivial, therefore  $T$ is a regular permutation
  group on $\Set{1,\dots, 2^n}$.
\end{proof}

Let us now  determine the permutations in  $\Sigma_n$ normalizing $T$.
For      $1      \leq      j<i\leq     n$      let      us      define
$X_{ij}\deq \Set{1,\dots,i}\setminus \Set{j}$ and
\begin{equation*}
  u_{ij}\deq [X_{ij}] = [i,\dots,j+1,j-1,\dots,1] \in \mathcal R^*.
\end{equation*}
From now on we will set
\begin{equation*}
  \mathcal{U}_n \deq \Set{t_1,\dots,t_n, u_{ij} \mid 1 \leq j < i \leq
    n} \subseteq \mathcal{R}^*.
\end{equation*}
\begin{proposition}\label{prop:U_as_normalizer}
  The  group  $\Span{\mathcal{U}_n}$  is  the  normalizer  of  $T$  in
  $\Sigma_n$, i.e.
  \begin{equation*}
    U_n = \Span{T,u_{ij} \mid 1 \le j < i \le n}.
  \end{equation*}
\end{proposition}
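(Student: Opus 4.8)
The plan is to establish the two inclusions $\Span{\mathcal{U}_n}\le U_n$ and $U_n\le \Span{\mathcal{U}_n}$, deducing the second from a cardinality count. The engine for everything is item~\eqref{item:general_commutator} of Lemma~\ref{lem:compound_commutators}, which evaluates any commutator of two of the generators in $\mathcal{U}_n$ as a single rigid commutator or as the identity.

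First I would prove $\Span{\mathcal{U}_n}\le U_n$. Since $T=\Span{t_1,\dots,t_n}$ and the $t_i$ already belong to $\mathcal{U}_n$, it is enough to check that each $u_{ij}$ normalizes $T$, that is, $t_k^{u_{ij}}=t_k[t_k,u_{ij}]\in T$ for every $k$. Writing $t_k=[\Set{1,\dots,k}]$ and $u_{ij}=[\Set{1,\dots,i}\setminus\Set{j}]$ and applying item~\eqref{item:general_commutator} (together with item~\eqref{item:same_start} when the two bases coincide), I expect
\begin{equation*}
  [t_k,u_{ij}]=
  \begin{cases}
    t_i & \text{if } k=j,\\
    1 & \text{otherwise.}
  \end{cases}
\end{equation*}
The dichotomy comes from the ``moreover'' clause: the maximal index of the commutator of smaller base lies in the index set of the other exactly when $k\neq j$, which forces triviality; when $k=j$ instead $j\notin\Set{1,\dots,i}\setminus\Set{j}$, the clause does not trigger, and the surviving index set is $\Set{1,\dots,i}$, giving $t_i$. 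In every case $t_k^{u_{ij}}\in T$, so $u_{ij}\in U_n$ and $\Span{\mathcal{U}_n}\le U_n$.

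Next I would compute $\Size{\Span{\mathcal{U}_n}}$ by showing that $\mathcal{U}_n$ is saturated and invoking Corollary~\ref{cor:saturated_subgroups}. The commutators $[t_k,t_l]$ vanish because $T$ is abelian (Lemma~\ref{lem:translation_subgroup}), the $[t_k,u_{ij}]$ were just computed, and the same use of item~\eqref{item:general_commutator} should yield
\begin{equation*}
  [u_{ij},u_{kl}]=
  \begin{cases}
    u_{il} & \text{if } k=j,\\
    1 & \text{otherwise,}
  \end{cases}
\end{equation*}
where in the nontrivial case $l<j<i$ and the surviving index set is $\Set{1,\dots,i}\setminus\Set{l}$. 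As all these commutators lie in $\mathcal{U}_n\cup\Set{[\emptyset]}$, the set $\mathcal{U}_n$ is saturated; since its $n+\binom{n}{2}=\binom{n+1}{2}$ elements are pairwise distinct rigid commutators, Corollary~\ref{cor:saturated_subgroups} gives $\Size{\Span{\mathcal{U}_n}}=2^{\binom{n+1}{2}}$.

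Finally I would match the orders. As recalled at the start of this section, $U_n$ is a Sylow $2$-subgroup of $\AGL(T)$, and the $2$-part of $\Size{\AGL(T)}=2^n\Size{\GL_n(2)}$ equals $2^{n+\binom{n}{2}}=2^{\binom{n+1}{2}}$; hence $\Size{U_n}=\Size{\Span{\mathcal{U}_n}}$, and the inclusion $\Span{\mathcal{U}_n}\le U_n$ upgrades to equality. I expect the only real obstacle to be bookkeeping: carefully verifying, in each application of item~\eqref{item:general_commutator}, which index set absorbs the maximal index of the other commutator (producing the identity) and which configuration returns a generator back in $\mathcal{U}_n$.
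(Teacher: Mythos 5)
Your proposal is correct and follows essentially the same route as the paper: show each $u_{ij}$ normalizes $T$ via the commutator formula $[t_h,u_{ij}]=t_i$ or $1$, conclude that $\mathcal{U}_n$ is saturated, and then match $\Size{\Span{\mathcal{U}_n}}=2^{\binom{n+1}{2}}$ against $\Size{U_n}$ (the paper simply asserts this order for $U_n$, which as you note is the $2$-part of $\Size{\AGL(T)}$ since $U_n$ is Sylow there). The only caveat is that your formula for $[u_{ij},u_{kl}]$ should be read after normalizing $i>k$ by item~\eqref{item:symmetry} (otherwise the case $l=i$ with $k>i$ also gives a nontrivial value, namely $u_{kj}$), but either way the commutator lands in $\mathcal{U}_n\cup\Set{[\emptyset]}$, so saturation holds.
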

\begin{proof}
  Let us set $U \deq \Span{T,u_{ij} \mid 1 \le j < i \le n}$ and
  let   us   prove   that   $U   =   U_n   =   N_{\Sigma_n}(T)$.    By
  Lemma~\ref{lem:compound_commutators} we have
  \begin{equation*}
    [t_h,u_{ij}]=
    \begin{cases}
      1 & \text{if $h\ne j$} \\
      t_i & \text{if $h=j$}.
    \end{cases}
  \end{equation*}
  This shows that $U\le N_{\Sigma_n}(T) = U_n$ and that $\mathcal U_n$
  is       a        saturated       set.         Therefore,       from
  Corollary~\ref{cor:saturated_subgroups},
  $\Size{U}=   2^{\Size{\mathcal   U_n}}  =   2^{\frac{n(n+1)}{2}}   =
  \Size{U_n}$, which proves the claim.
\end{proof}

We aim at proving our second  main result, providing the generators of
the normalizer $N_n^i$  in terms of rigid commutators.   The result is
proved by induction on~$i\ge 1$.
\subsection*{Induction basis}
Let  us denote  by  $\eta_n$ the  rigid commutator  based  at $n$  and
hanging from $3$ such that no intermediate integer is missing, i.e.
\begin{equation}\label{eq:n11} 
	\eta_n\deq [n,n-1,\dots,3].
\end{equation}
We now prove that we can generate $N_n^1$ by appending $\eta_n$ to the
list $\mathcal{U}_n$ of the rigid commutators generating $U_n$.
\begin{proposition}\label{prop:n1}
  If  $n\ge 3$,  then the  group $\Span{\mathcal{U}_n,\eta_n}$  is the
  normalizer $N_n^1$ of $U_n$ in $\Sigma_n$, i.e.
  \begin{equation*}
    N_n^1 = \Span{T,u_{ij},\eta_n \mid 1 \le j < i \le n}.
  \end{equation*}
  Moreover, $\Size{N_n^1:U_n}=2$.
\end{proposition}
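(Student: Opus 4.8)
The plan is to single out $\eta_n$ as an element of $N_n^1\setminus U_n$ and then to lean on the fact, recalled in the introduction (and proved in \cite{Aragona2019}), that $U_n$ sits inside $N_n^1$ with index $2$. Everything hinges on evaluating, by way of the rigid commutator machinery of Lemma~\ref{lem:compound_commutators}, the commutator $[g,\eta_n]$ for each generator $g\in\mathcal U_n$, where $\eta_n=[\{3,\dots,n\}]$. I would organize the computation according to the base of $g$, since this is exactly the datum that selects which clause of the lemma applies.

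If $g$ is based at an index $h$ with $3\le h\le n$ — these are $t_h$ and all the $u_{hj}$ with such $h$ — then the base of $g$ lies in the index set $\{3,\dots,n\}$ of $\eta_n$. When $h=n$ the two commutators share their base and item~\eqref{item:same_start} gives $[g,\eta_n]=1$; when $3\le h\le n-1$ the base $h=j_1$ belongs to $\{i_1,\dots,i_k\}=\{3,\dots,n\}$, so the final clause of item~\eqref{item:general_commutator} again yields $[g,\eta_n]=1$. The only remaining generators are based at $1$ or $2$, whose index sets are disjoint from $\{3,\dots,n\}$, so item~\eqref{item:disjoint_comm} applies and I expect $[t_1,\eta_n]=[n,\dots,3,1]=u_{n2}$ and $[t_2,\eta_n]=[u_{21},\eta_n]=[n,\dots,3,2]=u_{n1}$. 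Thus in every case $[g,\eta_n]\in\{[\emptyset],u_{n1},u_{n2}\}\subseteq U_n$.

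Two conclusions follow at once. First, $g^{\eta_n}=g\,[g,\eta_n]\in U_n$ for every generator, so $U_n^{\eta_n}\le U_n$ and, by equality of orders, $U_n^{\eta_n}=U_n$; hence $\eta_n\in N_{\Sigma_n}(U_n)=N_n^1$ and $\Span{\mathcal U_n,\eta_n}\le N_n^1$. Second, since $\mathcal U_n$ is saturated (as noted in the proof of Proposition~\ref{prop:U_as_normalizer}) and the computed commutators land in $\mathcal U_n\cup\{[\emptyset]\}$, the set $\mathcal U_n\cup\{\eta_n\}$ is itself saturated; as $\{3,\dots,n\}$ is not among the index sets of $\mathcal U_n$, the commutator $\eta_n$ is new, so Corollary~\ref{cor:saturated_subgroups} gives $\Size{\Span{\mathcal U_n,\eta_n}}=2^{\frac{n(n+1)}{2}+1}=2\,\Size{U_n}$. (Equivalently, the uniqueness of the rigid factorization in Theorem~\ref{thm:unique_representation} shows directly that $\eta_n\notin U_n$.)

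Combining the inclusion $\Span{\mathcal U_n,\eta_n}\le N_n^1$ with the order count $\Size{\Span{\mathcal U_n,\eta_n}}=2\,\Size{U_n}=\Size{N_n^1}$ forces $N_n^1=\Span{\mathcal U_n,\eta_n}$, and simultaneously delivers $\Size{N_n^1:U_n}=2$. The step I expect to require the most care is purely computational: for each generator one must correctly identify which clause of Lemma~\ref{lem:compound_commutators} governs $[g,\eta_n]$, and in particular verify that the two low-base cases feed back precisely into $u_{n1}$ and $u_{n2}$ rather than producing a rigid commutator outside $\mathcal U_n$. It is exactly this closure that makes $\mathcal U_n\cup\{\eta_n\}$ saturated and that the argument relies on.
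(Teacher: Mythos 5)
Your proof is correct and follows essentially the same route as the paper's: evaluate $[g,\eta_n]$ for every generator $g\in\mathcal{U}_n$ via Lemma~\ref{lem:compound_commutators}, conclude that $\eta_n$ normalizes $U_n$ while lying outside it, and invoke $\Size{N_{\Sigma_n}(U_n):U_n}=2$ from \cite{Aragona2019}; your saturation count of $\Size{\Span{\mathcal{U}_n,\eta_n}}$ is just a repackaging of the paper's observation that $\eta_n$, being based at $n$, lies outside $U_n\cap S_n=\Span{t_n,u_{n,1},\dots,u_{n,n-1}}$. Incidentally, your value $[t_2,\eta_n]=u_{n,1}$ is the correct one (the paper's proof misprints it as $t_n$), but since $t_n\in U_n$ as well, this discrepancy is harmless to either argument.
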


\begin{proof}
  By Lemma~\ref{lem:compound_commutators},
  \begin{equation*}
    [t_i,\eta_n] =
    \begin{cases}
      u_{n,2} & \text{if $i=1$} \\
      t_n & \text{if $i=2$}\\
      1 & \text{otherwise}
    \end{cases}
    \text{\quad and \quad } [u_{ij},\eta_n] =
    \begin{cases}
      u_{n,1}& \text{if $i=2$ and $j=1$}\\
      1 & \text{otherwise}
    \end{cases},
  \end{equation*}
  Thus the  rigid commutator $\eta_n$ belongs  to $N_{\Sigma_n}(U_n)$,
  hence    $\Span{U_n,\eta_n}   \le    N_{\Sigma_n}(U_n)$.    Moreover
  $U_n \cap  S_n=\Span{t_n,u_{n,1},\dots,u_{n,n-1}}$ and  so $\eta_n$,
  which is based at $n$, is  such that $\eta_n \notin U_n$.  The claim
  now       follows        from       $\Size{N_{\Sigma_n}(U_n):U_n}=2$
  \cite[Theorem~7]{Aragona2019}.
\end{proof}

\subsection*{Inductive step}
 Let {$1\leq b  \leq n$} and let
$I$ be a (possibly empty)  subset of $\Set{1,2,\dots,b-1}$.  We define
the \emph{rigid commutator based at $b$ and punctured at $I$} as
\begin{equation}
  \puncture{b}{I} \deq [  \Set{1,\dots,b} \setminus
  I] \in \mathcal{R}^*
\end{equation} 
and, if $I = \Set{i_1,i_2,\dots,i_k}$ we also denote $\puncture{b}{I}$
by $\puncture{b}{i_1,i_2,\dots,i_k}$.\\
For example, the permutation $\eta_n$ defined in Eq.~\eqref{eq:n11} is
equal to $\puncture{n}{2,1}$.  \medskip
 
\noindent We also define
\begin{equation}
  \label{eq:C_lk}
  \mathcal{W}_{ij}\deq \Set{\puncture{i}{I} \in \mathcal{R}^* \,\,\Big  | \,\, I \subseteq
    \Set{1,2,\dots,i-1},  \Size{I}  \ge  2,  \vphantom{\sum}\smash{\sum_{x  \in  I}  x}=j  \;
  }
\end{equation}
for each $1 \leq i \leq n$ and $j$, and
\begin{equation}\label{def:Ni}
  \mathcal{N}_n^{i}\deq
  \begin{cases}
    \mathcal{U}_n & \text{if $i=0$} \\
    \mathcal{N}_n^{i-1}    \dot\cup    \left(    \dot\bigcup_{j=1}^{i}
      \mathcal{W}_{n+j-i,\,j+2} \right) & \text{for $i>0$.}
  \end{cases}
\end{equation}
Note  that, if  $j  \leq i-2$,  then $\Size{  \mathcal{W}_{i,j}}=b_j$,
i.e.\  the number  of partitions  of $j$  into at  least two  distinct
parts.  Our next goal  is to prove that $N_n^i=\Span{\mathcal{N}_n^i}$
for  each  $0\le   i  \le  n-2$,  where  $N_n^i$  is   defined  as  in
Eq.~\eqref{eq:normalizer_chain}. Propositions~\ref{prop:U_as_normalizer}
and \ref{prop:n1} show that this is actually the case when
$i\in \{0,1\}$.   \newline
\newline
In  order to  prove the  general result,  we need  the following  
reformulation      of       item~\ref{item:general_commutator}      of
Lemma~\ref{lem:compound_commutators} to  compute commutators  of rigid
commutators written in punctured form.

\begin{proposition}\label{prop:punctured}
  Let  $1  \leq   a,b  \leq  n$  and  let  $I$   and  $J$  subsets  of
  $\Set{1,2,\dots,a-1}$ and $\Set{1,2,\dots,b-1}$ respectively. Then
  \begin{equation*}
    \bigl[\,
    \puncture{a}{I} ,
    \puncture{b}{J}
    \, \bigr] = 
    \begin{cases}
      \puncture{\max (a,b)}{\;  (I \cup  J)\setminus\Set{\min(a,b)}} &
      \text{if $\min (a,b)\in I \cup J$}
      \\
      1 & \text{otherwise}.
    \end{cases}
  \end{equation*}
\end{proposition}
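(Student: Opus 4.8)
The plan is to translate the punctured-form statement back into the ordinary rigid-commutator notation and apply item~\ref{item:general_commutator} of Lemma~\ref{lem:compound_commutators} directly.

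Let me set up the notation. We have $\puncture{a}{I} = [\Set{1,\dots,a}\setminus I]$ and $\puncture{b}{J} = [\Set{1,\dots,b}\setminus J]$. Without loss of generality I will assume $a \le b$ (the case $a > b$ follows by symmetry, using item~\ref{item:symmetry} of the lemma, which tells us the commutator is symmetric up to order~$2$). So $\min(a,b) = a$ and $\max(a,b) = b$. The first rigid commutator is based at $a$, the second at $b$, and since $I \subseteq \Set{1,\dots,a-1}$ and $J \subseteq \Set{1,\dots,b-1}$, both are genuine rigid commutators with the claimed maxima.

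Now I invoke item~\ref{item:general_commutator}. Writing $\puncture{b}{J} = [j_1,\dots,j_l]$ with $j_1 = b$ as the base, and $\puncture{a}{I} = [i_1,\dots,i_k]$ with $i_1 = a$, the relevant index in the lemma is $s = \max\Set{h \mid i_h > j_1}$; but since $j_1 = b \ge a = i_1 \ge i_h$ for all $h$, there is no $i_h > b$ unless $a > b$, so in the regime $a \le b$ we should instead apply the lemma with the roles reversed — i.e.\ compute $[\puncture{b}{J},\puncture{a}{I}]$ and use symmetry. The base of the outer commutator is then $b$, and the "piercing" element is $j_1 = a$. The lemma says: $c = 1$ if $a \in \Set{1,\dots,b}\setminus J$ (the index set of the base commutator), and otherwise $c$ is the rigid commutator whose index set is built by inserting $a$ into position and appending the common indices $\Set{h_1,\dots,h_t} = \bigl(\Set{1,\dots,b}\setminus J\bigr) \cap \bigl(\Set{1,\dots,a}\setminus I\bigr)$. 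The trivial case $a \in \Set{1,\dots,b}\setminus J$ is exactly the condition $a \notin J$, i.e.\ $\min(a,b) \notin I\cup J$ (since $a \notin I$ automatically as $I \subseteq \Set{1,\dots,a-1}$); this matches the "$1$ otherwise" branch. In the nontrivial case $a \in J$, a direct set computation shows the resulting index set is $\Set{1,\dots,b}\setminus\bigl((I\cup J)\setminus\Set{a}\bigr)$, giving $\puncture{b}{(I\cup J)\setminus\Set{a}}$, as claimed.

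I expect the main obstacle to be purely bookkeeping: carefully verifying that the set $\Set{h_1,\dots,h_t}$ of common indices together with the inserted base element $a$ reassembles exactly into the complement $\Set{1,\dots,b}\setminus\bigl((I\cup J)\setminus\Set{a}\bigr)$, and confirming that the ordering constraints ($i_s > j_1 \ge i_{s+1}$) in item~\ref{item:general_commutator} are consistently met in both the $a = b$ and $a < b$ subcases. The edge case $a = b$ deserves a separate check: there item~\ref{item:same_start} of the lemma (equal bases forces $c = 1$) and the condition $\min(a,b)\in I\cup J$ must be reconciled, and indeed when $a = b$ neither $\puncture{a}{I}$ nor $\puncture{b}{J}$ contains $a$ in its index-set complement in a way that triggers the nontrivial branch, so both descriptions agree on $c = 1$. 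Once these set-theoretic identities are pinned down, the proposition follows immediately from the lemma with no further group-theoretic input.
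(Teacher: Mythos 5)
Your proof is correct and follows essentially the same route as the paper's: after disposing of the edge case $a=b$ (commuting bases) and fixing an order of the bases, everything reduces to item~\ref{item:general_commutator} of Lemma~\ref{lem:compound_commutators} together with the set-theoretic bookkeeping identifying $\Set{i_1,\dots,i_s,j_1,h_1,\dots,h_t}$ with $\Set{1,\dots,\max(a,b)}\setminus\bigl((I\cup J)\setminus\Set{\min(a,b)}\bigr)$. The only cosmetic difference is that the paper assumes $a>b$ and applies the lemma directly, whereas you assume $a\le b$ and then swap the arguments using item~\ref{item:symmetry}; both are immaterial variants of the same argument.
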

\begin{proof}
  Let $c\deq \bigl[\, \puncture{a}{I} , \puncture{b}{J} \, \bigr]$. If
  $a=b$, then  $c=1$. Without loss  of generality, we can  assume that
  $a> b$.   By Lemma~\ref{lem:compound_commutators}, if $b  \notin I$,
  then   $c=1$.   If   $b\in   I$,  the   claim   follows  from   item
  \ref{item:general_commutator}                                     of
  Lemma~\ref{lem:compound_commutators}.
\end{proof}

In the following facts, we summarize  some properties that will  be  useful  in 
the proof of the conjecture.

\begin{fact}\label{lem:N_i-1}
  A  commutator $\puncture{a}{J}$  such that  $1  \leq a  \leq n$  and
  $J \subseteq \Set{1,2,\dots,a-1}$  belongs to $\mathcal{N}_n^{i}$ if
  and only if one of the following conditions is satisfied:
  \begin{enumerate}
  \item \label{item:b} $J=\emptyset$, and so $\puncture{a}{J}=t_a$;
  \item \label{item:c}  $\Size{J}=1$, and  so $\puncture{a}{J}=u_{aj}$
    where $J=\Set{j}$;
  \item    \label{item:a}    $\Size{J}\geq     2$, and
    $\sum_{j\in J} j \le i+2-(n-a)$.
  \end{enumerate}
\end{fact}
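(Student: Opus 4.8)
The plan is to prove the statement by directly unravelling the recursive definition~\eqref{def:Ni} of $\mathcal{N}_n^i$ and tracking, for each punctured commutator, the precise step at which it enters the set.

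First I would dispose of the cases $\Size{J}\le 1$. By definition the set $\mathcal{N}_n^0=\mathcal{U}_n$ consists exactly of the commutators $t_a=\puncture{a}{\emptyset}$ and $u_{aj}=\puncture{a}{j}$ (with $1\le j<a\le n$), that is, of all $\puncture{a}{J}$ with $J\subseteq\Set{1,\dots,a-1}$ and $\Size{J}\le 1$; since $\mathcal{N}_n^0\subseteq\mathcal{N}_n^i$ for every $i\ge 0$, each such commutator lies in $\mathcal{N}_n^i$, which yields the membership claimed in conditions~\ref{item:b} and~\ref{item:c}. For the converse it suffices to note that, by~\eqref{eq:C_lk}, every set $\mathcal{W}_{p,q}$ appearing in~\eqref{def:Ni} consists of commutators $\puncture{p}{I}$ with $\Size{I}\ge 2$; hence no commutator with $\Size{J}\le 1$ is ever added after step $0$, and the three conditions are mutually exclusive. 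It remains to analyse the commutators with $\Size{J}\ge 2$.

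For such a commutator set $S\deq\sum_{j\in J}j$; as $J$ consists of at least two distinct positive integers, $S\ge 3$. Unravelling~\eqref{def:Ni} gives
\begin{equation*}
  \mathcal{N}_n^i=\mathcal{U}_n\;\dot\cup\;\dot\bigcup_{m=1}^{i}\;\dot\bigcup_{j=1}^{m}\mathcal{W}_{n+j-m,\,j+2},
\end{equation*}
and by~\eqref{eq:C_lk} the commutator $\puncture{a}{J}$ belongs to $\mathcal{W}_{p,q}$ precisely when $p=a$ and $q=S$. The key step is therefore to determine for which pairs $(m,j)$ one has $n+j-m=a$ and $j+2=S$: this forces $j=S-2$ and $m=n-a+S-2$, while the constraints $1\le j\le m$ governing the inner union amount to $S\ge 3$ and $a\le n$, both of which hold. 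Consequently $\mathcal{W}_{a,S}$ is introduced at the step $m=n-a+S-2\ge 1$, so that $\puncture{a}{J}$ lies in $\mathcal{N}_n^i$ if and only if $i\ge n-a+S-2$, i.e.\ $S\le i+2-(n-a)$, which is condition~\ref{item:a}.

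I expect the only delicate point to be the bookkeeping in the last paragraph: one must verify that $\puncture{a}{J}$ occurs in exactly one of the sets $\mathcal{W}_{p,q}$ and at exactly one step $m$, so that the disjoint unions in~\eqref{def:Ni} are genuinely disjoint and the step index is unambiguous. This follows at once because membership in $\mathcal{W}_{p,q}$ simultaneously pins down the base $p=a$ and the puncture-sum $q=S$, and because, for fixed $(a,S)$, the system $n+j-m=a$, $j+2=S$ has the unique solution found above.
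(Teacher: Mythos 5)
Your proposal is correct and takes essentially the same route as the paper: the paper states this Fact without a separate proof, treating it as an immediate consequence of Eq.~\eqref{eq:C_lk} and Eq.~\eqref{def:Ni}, and your unravelling of the recursion (identifying the unique step $m=n-a+\sum_{j\in J}j-2$ at which $\mathcal{W}_{a,\sum_{j\in J}j}$ is adjoined, plus the observation that $\mathcal{U}_n$ accounts exactly for $\Size{J}\le 1$) is precisely the intended verification.
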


\begin{fact}\label{rem:reduction_for_N}
  Note     that      for     $2\le      i\le     n-2$      the     set
  $\mathcal{N}_n^i\cap (S_1\ltimes \dots \ltimes S_{n-1})$ is equal to
  $\mathcal{N}_{n-1}^{i-1}$.   Indeed, at  the  $i$-th iteration,  the
  newly generated  elements of  $\mathcal{N}_n^i$, which are  those in
  $\mathcal{N}_n^i\setminus  \mathcal{N}_n^{i-1}$, are  constructed by
  \emph{lifting}              the             elements              of
  $\mathcal{N}_n^{i-1}\setminus    \mathcal{N}_n^{i-2}$,   i.e.\    by
  replacing a rigid commutator based  at $j$ with the rigid commutator
  obtained by removing  its left-most element, for $j \leq  n$, and by
  adding some new  rigid commutators based at $n$,  in accordance with
  Eq.~\eqref{def:Ni}.  Proceeding in this way  it is easy to check that,
  disregarding    all    the    commutators   based    at    $n$    in
  $\mathcal{N}_n^{i}$, the lifted elements are exactly the elements of
  $\mathcal{N}_{n-1}^{i-1}$.     The    reader    is    referred    to
  Section~\ref{sec:computation} for explicit examples.
\end{fact}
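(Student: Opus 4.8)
The plan is to reduce the asserted set identity to the explicit membership criterion of Fact~\ref{lem:N_i-1}. The first step is to identify the intersection on the left-hand side. Every element of $\mathcal{N}_n^i$ is a rigid commutator $\puncture{a}{J}$ based at some index $a$, and by Lemma~\ref{lem:basis} such a commutator lies in $S_a$. From the decomposition $\Sigma_n = S_1 \ltimes \dots \ltimes S_n$ and the uniqueness of the factorization in Theorem~\ref{thm:unique_representation}, a nontrivial element of $S_a$ belongs to $S_1 \ltimes \dots \ltimes S_{n-1}$ precisely when $a \leq n-1$. Since $\mathcal{N}_n^i \subseteq \mathcal{R}^*$ contains only nontrivial commutators, I would conclude that $\mathcal{N}_n^i \cap (S_1 \ltimes \dots \ltimes S_{n-1})$ is exactly the subset of those $\puncture{a}{J} \in \mathcal{N}_n^i$ with $a \leq n-1$, that is, the commutators of $\mathcal{N}_n^i$ not based at $n$. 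Both this set and $\mathcal{N}_{n-1}^{i-1}$ are then indexed by subsets $\Set{1,\dots,a}\setminus J$ of $\Set{1,\dots,n-1}$, so it suffices to match their defining conditions index by index.

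The second step is the comparison itself, carried out through Fact~\ref{lem:N_i-1}. Fix a commutator $\puncture{a}{J}$ with $a \leq n-1$. The cases $J = \emptyset$ and $\Size{J} = 1$ produce the generators $t_a$ and $u_{aj}$, and these lie in $\mathcal{N}_n^i$ as well as in $\mathcal{N}_{n-1}^{i-1}$ with no further constraint, so they contribute identically to both sides. In the remaining case $\Size{J} \geq 2$, Fact~\ref{lem:N_i-1} places $\puncture{a}{J}$ in $\mathcal{N}_n^i$ exactly when $\sum_{j \in J} j \leq i + 2 - (n-a)$; applying the same criterion with the pair $(n,i)$ replaced by $(n-1,i-1)$ places $\puncture{a}{J}$ in $\mathcal{N}_{n-1}^{i-1}$ exactly when $\sum_{j \in J} j \leq (i-1) + 2 - ((n-1)-a)$. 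The decisive observation is that the two thresholds are literally equal, both collapsing to $i + 2 - n + a$. Hence the membership conditions coincide on every index, and the two sets agree.

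Finally I would check that Fact~\ref{lem:N_i-1} is legitimately applicable on both sides: for $2 \leq i \leq n-2$ one has $1 \leq i-1 \leq (n-1)-2$, so $\mathcal{N}_{n-1}^{i-1}$ is a genuine term of the normalizer sequence over $\Set{1,\dots,n-1}$, whose maximal base $n-1$ matches the ambient bound. I do not expect any real obstacle in this argument; it is entirely a bookkeeping verification that the threshold $i+2-(n-a)$ controlling the punctured commutators is invariant under the simultaneous decrement of $n$ and $i$ by one. This invariance is the algebraic counterpart of the \emph{lifting} phenomenon: deleting the left-most index of a commutator based at $j \leq n$ lowers both its base and its distance $n-a$ from the top by one, leaving the governing sum unchanged, which is exactly why the truncated $\mathcal{N}_n^i$ reproduces $\mathcal{N}_{n-1}^{i-1}$.
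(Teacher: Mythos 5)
Your proof is correct, and it takes a more direct route than the paper's own justification. The paper argues recursively: by Eq.~\eqref{def:Ni}, the newly added elements $\mathcal{N}_n^i\setminus\mathcal{N}_n^{i-1}=\dot\bigcup_{j=1}^{i}\mathcal{W}_{n+j-i,\,j+2}$ are exactly the \emph{lifts} (left-most index removed, puncture set kept) of the previously added elements $\mathcal{N}_n^{i-1}\setminus\mathcal{N}_n^{i-2}$, together with the fresh commutators $\mathcal{W}_{n,\,i+2}$ based at $n$; iterating this observation and discarding everything based at $n$ is what leaves $\mathcal{N}_{n-1}^{i-1}$. You bypass that induction on $i$ entirely: after identifying $\mathcal{N}_n^i\cap(S_1\ltimes\dots\ltimes S_{n-1})$ with the set of commutators of $\mathcal{N}_n^i$ based at $a\le n-1$ (which does need exactly the ingredients you cite: Lemma~\ref{lem:basis} for nontriviality of elements of $\mathcal{R}^*$, and the decomposition $\Sigma_n=(S_1\ltimes\dots\ltimes S_{n-1})\ltimes S_n$, so that $S_n$ meets the left factor trivially), you compare memberships on both sides through the closed-form criterion of Fact~\ref{lem:N_i-1}, where everything collapses to the arithmetic identity $(i-1)+2-\bigl((n-1)-a\bigr)=i+2-(n-a)$; the cases $J=\emptyset$ and $\Size{J}=1$ match trivially since $\mathcal{U}_{n-1}\subseteq\mathcal{U}_n$ on commutators based below $n$. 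What your route buys is rigor and brevity: the paper's ``it is easy to check'' becomes a one-line invariance of the threshold, with no lifting bijection to set up. What it loses is the structural picture: the lifting description is what the paper reuses in Fact~\ref{rem:reduction_for_N_v2} and in the worked examples of Section~\ref{sec:computation}, though your closing remark rightly identifies the threshold invariance as the algebraic shadow of that phenomenon. Your appeal to Fact~\ref{lem:N_i-1} for the pair $(n-1,i-1)$ is legitimate, as that Fact is nothing but the closed-form unfolding of the recursion in Eq.~\eqref{def:Ni} and holds for any ambient rank and index, so the two arguments are equivalent at bottom; yours is simply the non-recursive phrasing.
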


\begin{fact}\label{rem:reduction_for_N_v2}
  In   the  proof   of   Proposition~\ref{prop:n1}   we  showed   that
  \(    [\,\mathcal{N}_n^{1}   ,    \mathcal{N}_n^{0}   ]    \subseteq
  \mathcal{N}_n^{0} \cup \Set{[\emptyset]}  \).  Assuming by induction
  on            $2\le             i\le            n-2$            that
  $[\,\mathcal{N}_{n-1}^{i-1},\mathcal{N}_{n-1}^{i-2}]       \subseteq
  \mathcal{N}_{n-1}^{i-2}   {\cup    \Set{[\emptyset]}}$   and   using
  Fact~\ref{rem:reduction_for_N}, we can conclude that
  \begin{multline*}
    [\,\mathcal{N}_n^{i}  \cap  (S_1\ltimes  \dots  \ltimes  S_{n-1}),
    \mathcal{N}_n^{i-1} \cap (S_1\ltimes \dots \ltimes S_{n-1})] =
    \\
    [\,\mathcal{N}_{n-1}^{i-1},   \mathcal{N}_{n-1}^{i-2}]   \subseteq
    \mathcal{N}_{n-1}^{i-2} {\cup \Set{1}}  = \mathcal{N}_n^{i-1} \cap
    (S_1\ltimes \dots \ltimes S_{n-1}) \cup \Set{[\emptyset]}.
  \end{multline*}
  Similarly,
  \begin{equation*}
    [\,\mathcal{N}_n^{i}  \cap  (S_1\ltimes  \dots  \ltimes  S_{n-1}),
    \mathcal{N}_n^{i}   \cap  (S_1\ltimes   \dots  \ltimes   S_{n-1})]
    \subseteq   \mathcal{N}_n^{i}  \cap   (S_1\ltimes  \dots   \ltimes
    S_{n-1}) \cup \Set{[\emptyset]}.
  \end{equation*}
\end{fact}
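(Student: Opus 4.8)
The plan is to reduce both displays to the stated inductive hypothesis by transporting everything through the identification recorded in Fact~\ref{rem:reduction_for_N}. The first thing I would make explicit is the reason this transport is legitimate: the rigid commutator machinery of Section~\ref{subsec:rcm}, and in particular the closed form in Proposition~\ref{prop:punctured}, expresses the commutator of two rigid commutators entirely in terms of their index sets, with no reference to the ambient parameter $n$. Hence, if $[X]$ and $[Y]$ are rigid commutators based at indices at most $n-1$, their commutator $[[X],[Y]]$ is again based at an index at most $n-1$, so it lies in $S_1\ltimes\dots\ltimes S_{n-1}$, and moreover its value is unchanged whether it is computed inside $\Sigma_{n-1}$ or inside $\Sigma_n$. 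This is exactly what makes the inclusion $\Sigma_{n-1}\cong S_1\ltimes\dots\ltimes S_{n-1}\le\Sigma_n$ respect commutators of rigid generators, and it underlies every step that follows.

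Granting this, the first display becomes a short chain. I would invoke Fact~\ref{rem:reduction_for_N} with exponents $i$ and $i-1$ to rewrite
\[
\mathcal{N}_n^{i}\cap(S_1\ltimes\dots\ltimes S_{n-1})=\mathcal{N}_{n-1}^{i-1},\qquad \mathcal{N}_n^{i-1}\cap(S_1\ltimes\dots\ltimes S_{n-1})=\mathcal{N}_{n-1}^{i-2},
\]
which yields the opening equality. The inductive hypothesis $[\mathcal{N}_{n-1}^{i-1},\mathcal{N}_{n-1}^{i-2}]\subseteq\mathcal{N}_{n-1}^{i-2}\cup\Set{[\emptyset]}$ then supplies the central inclusion, and a final application of Fact~\ref{rem:reduction_for_N} rewrites $\mathcal{N}_{n-1}^{i-2}$ back as $\mathcal{N}_n^{i-1}\cap(S_1\ltimes\dots\ltimes S_{n-1})$, closing the chain.

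For the ``Similarly'' display the same identification turns the left-hand side into $[\mathcal{N}_{n-1}^{i-1},\mathcal{N}_{n-1}^{i-1}]$, so the required inclusion is precisely the assertion that $\mathcal{N}_{n-1}^{i-1}$ is saturated, i.e.\ closed under commutators up to $[\emptyset]$. I would carry this saturation along as part of the inductive package: it holds at the base by Propositions~\ref{prop:U_as_normalizer} and~\ref{prop:n1}, where $\mathcal{U}_n$ and $\mathcal{U}_n\cup\Set{\eta_n}$ are verified to be saturated, and it is inherited at level $i-1$ of $\Sigma_{n-1}$ because the new generators introduced by Eq.~\eqref{def:Ni} are added in a way that keeps the set closed under the commutator formula of Proposition~\ref{prop:punctured}.

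The one delicate point is the bookkeeping at the ends of the range: Fact~\ref{rem:reduction_for_N} is stated for $2\le i\le n-2$, so using it with exponent $i-1$ when $i=2$ requires the separate identity $\mathcal{N}_n^{1}\cap(S_1\ltimes\dots\ltimes S_{n-1})=\mathcal{U}_{n-1}=\mathcal{N}_{n-1}^{0}$. This I would check by hand from the observation that the unique generator of $\mathcal{N}_n^{1}$ lying outside $\mathcal{U}_n$ is $\eta_n=\puncture{n}{2,1}$, which is based at $n$ and hence sits in $S_n$, not in $S_1\ltimes\dots\ltimes S_{n-1}$. Apart from this boundary verification, every step is a direct substitution once the $n$-independence of the commutator machinery has been recorded.
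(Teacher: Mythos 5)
Your proposal is correct and follows essentially the same route as the paper: transport both displays through the identification $\mathcal{N}_n^{i}\cap(S_1\ltimes\dots\ltimes S_{n-1})=\mathcal{N}_{n-1}^{i-1}$ of Fact~\ref{rem:reduction_for_N}, apply the inductive hypothesis $[\mathcal{N}_{n-1}^{i-1},\mathcal{N}_{n-1}^{i-2}]\subseteq\mathcal{N}_{n-1}^{i-2}\cup\Set{[\emptyset]}$, and handle the second display by carrying the self-commutator (saturation) inclusion along in the same joint induction. Your two added precisions --- the $n$-independence of commutators of rigid commutators via Proposition~\ref{prop:punctured}, and the boundary identity $\mathcal{N}_n^{1}\cap(S_1\ltimes\dots\ltimes S_{n-1})=\mathcal{N}_{n-1}^{0}$ needed when $i=2$ --- are points the paper leaves implicit, and they only strengthen the argument.
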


From the previous fact we have that, in order to prove by induction on
$i$                                                               that
$[\,\mathcal{N}_n^{i},                   \mathcal{N}_n^{i-1}]\subseteq
\mathcal{N}_n^{i-1}      \cup     \Set{[\emptyset]}$      and     that
$[\,\mathcal{N}_n^{i},  \mathcal{N}_n^{i}]\subseteq  \mathcal{N}_n^{i}
\cup     \Set{[\emptyset]}$,    it     suffices    to     show    that
$[\,\mathcal{W}_{n,\,i+2}\,    ,\,   \mathcal{N}_n^{i-1}]    \subseteq
\mathcal{N}_n^{i-1}\cup       \Set{[\emptyset]}$        and       that
$[\,\mathcal{W}_{n,\,i+2}\,    ,\,     \mathcal{N}_n^{i}]    \subseteq
\mathcal{N}_n^{i}\cup \Set{[\emptyset]}$. This  is accomplished in the
following result.

\begin{lemma}\label{lem:rigid_commutators_normalize}
  If                 $i\le                  n-2$,                 then
  $[\mathcal{N}_n^i,\mathcal{N}_n^{i-1}]\subseteq  \mathcal{N}_n^{i-1}
  \cup                      \Set{[\emptyset]}$                     and
  $[\mathcal{N}_n^i,\mathcal{N}_n^i]\subseteq   \mathcal{N}_n^i   \cup
  \Set{[\emptyset]}$.
\end{lemma}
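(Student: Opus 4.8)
The plan is to carry out the induction exactly as prepared in Fact~\ref{rem:reduction_for_N} and Fact~\ref{rem:reduction_for_N_v2}: those facts reduce the inductive step, for both inclusions, to the single pair of ``top level'' inclusions
\begin{equation*}
  [\mathcal{W}_{n,\,i+2},\,\mathcal{N}_n^{i-1}]\subseteq\mathcal{N}_n^{i-1}\cup\Set{[\emptyset]}
  \quad\text{and}\quad
  [\mathcal{W}_{n,\,i+2},\,\mathcal{N}_n^{i}]\subseteq\mathcal{N}_n^{i}\cup\Set{[\emptyset]}.
\end{equation*}
So I would fix a commutator $\eta\deq\puncture{n}{I}\in\mathcal{W}_{n,i+2}$, that is $I\subseteq\Set{1,\dots,n-1}$ with $\Size{I}\ge 2$ and $\sum_{x\in I}x=i+2$, and let $\gamma\deq\puncture{a}{J}$ run through the generators of $\mathcal{N}_n^{i-1}$ (respectively $\mathcal{N}_n^{i}$), with $J\subseteq\Set{1,\dots,a-1}$.

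The commutator $[\eta,\gamma]$ is read off directly from Proposition~\ref{prop:punctured}. As $\eta$ is based at $n$ and $a\le n$, it is trivial when $a=n$; when $a<n$ the minimum is $a$, and since $J\subseteq\Set{1,\dots,a-1}$ already forces $a\notin J$, nontriviality amounts to $a\in I$. In that case
\begin{equation*}
  [\eta,\gamma]=\puncture{n}{K},\qquad K\deq(I\cup J)\setminus\Set{a},
\end{equation*}
which is again a rigid commutator based at $n$, with $K\subseteq\Set{1,\dots,n-1}$. Everything then hinges on estimating $\sum_{x\in K}x$ and feeding it into the membership criterion of Fact~\ref{lem:N_i-1}. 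Using $a\in I$, $a\notin J$ and $\sum_{x\in I}x=i+2$ gives
\begin{equation*}
  \sum_{x\in K}x=\sum_{x\in I\cup J}x-a\le (i+2)+\sum_{x\in J}x-a.
\end{equation*}

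I would then split on the shape of $\gamma$ according to Fact~\ref{lem:N_i-1}. If $J=\emptyset$ or $\Size{J}=1$ (clauses~\eqref{item:b}--\eqref{item:c}), then $\sum_{x\in J}x\le a-1$, so $\sum_{x\in K}x\le i+1$, which lies within the threshold for $\mathcal{N}_n^{i-1}$, and a fortiori for $\mathcal{N}_n^{i}$. The essential case is $\Size{J}\ge 2$ (clause~\eqref{item:a}): there the criterion yields $\sum_{x\in J}x\le i+1-n+a$ if $\gamma\in\mathcal{N}_n^{i-1}$, whence $\sum_{x\in K}x\le 2i+3-n$, and $\sum_{x\in J}x\le i+2-n+a$ if $\gamma\in\mathcal{N}_n^{i}$, whence $\sum_{x\in K}x\le 2i+4-n$. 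Since the output is based at $n$, Fact~\ref{lem:N_i-1} admits $\puncture{n}{K}$ into $\mathcal{N}_n^{i-1}$ (resp. $\mathcal{N}_n^{i}$) as soon as $\sum_{x\in K}x\le i+1$ (resp. $\le i+2$), and both of these follow from $2i+3-n\le i+1$ (resp. $2i+4-n\le i+2$), i.e. from the hypothesis $i\le n-2$.

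The step I expect to be the crux is precisely this final inequality: $i\le n-2$ is exactly the margin needed to keep the punctured weight of the output commutator below the threshold of Fact~\ref{lem:N_i-1}, and it is here that the bound on $i$ appearing in the conjecture is forced to enter. The remaining points are pure bookkeeping: that $\puncture{n}{K}$ is a genuine element of $\mathcal{R}^*$ with $K\subseteq\Set{1,\dots,n-1}$, and that the degenerate outcomes $\Size{K}\le 1$ are automatically admissible by clauses~\eqref{item:b}--\eqref{item:c} of Fact~\ref{lem:N_i-1}.
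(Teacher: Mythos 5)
Your proposal is correct and follows essentially the same route as the paper's own proof: the same reduction via Facts~\ref{rem:reduction_for_N} and~\ref{rem:reduction_for_N_v2} to the two inclusions involving $\mathcal{W}_{n,\,i+2}$, the same application of Proposition~\ref{prop:punctured}, and the same case split and arithmetic driven by Fact~\ref{lem:N_i-1}, with the hypothesis $i\le n-2$ entering exactly where you say it does. The only differences are expository: the paper treats cases \eqref{item:b} and \eqref{item:c} separately where you unify them via $\sum_{x\in J}x\le a-1$, and it dispatches the second inclusion as ``similar computations,'' which you instead write out (also noting explicitly the harmless degenerate outcomes $\Size{K}\le 1$).
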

\begin{proof}
  If
  $\puncture{n}{I}\in                   \mathcal{W}_{n,\,i+2}\subseteq
  \mathcal{N}_n^i\setminus           \mathcal{N}_n^{i-1}$          and
  $\puncture{a}{J}      \in     \mathcal{N}_n^{i-1}$      then,     by
  Proposition~\ref{prop:punctured},
  \begin{equation*} c  \deq\bigl[\, \puncture{n}{I}  , \puncture{a}{J}
    \, \bigr] =
    \begin{cases}
      \puncture{n}{\; (I   \cup  J)\setminus\Set{a}} & \text{if   $a\in I$}\\
      1 & \text{otherwise.}
    \end{cases}
  \end{equation*}
  We   may   assume   $a\in   I$.    From   Fact~\ref{lem:N_i-1},   if
  $\puncture{a}{J}$ is as in case \eqref{item:a}, we have
  \begin{eqnarray*}
    \sum_{x\in  (I  \cup  J)\setminus\Set{a}}   x  &\le&  \sum_{x\in  J}x +\sum_{x\in I} x - a \\
                                                   &\le& i+1-(n-a) + i+2-(n-n) -a \\
                                                   &=& i+2 -(n-i-1) \\
                                                   &\le& i+2 -1=i+1,
  \end{eqnarray*}
  and so $c  \in \mathcal{N}_n^{i-1}$.  If $\puncture{a}{J}$  is as in
  case \eqref{item:b}, i.e.\ $\puncture{a}{J}=t_a$, then we have
  \begin{equation*}
    \sum_{x\in  (I \cup  J)\setminus\Set{a}} x=  \sum_{x\in I}  x -  a
    =i+2-a    \le    i+1
  \end{equation*}
  and so, also in this  case, $c\in \mathcal{N}_n^{i-1}$.  Finally, if
  $\puncture{a}{J}$    is   as    in   case    \eqref{item:c},   i.e.\
  $\puncture{a}{J}=u_{a,j}$, we have
  \begin{equation*}
    \sum_{x\in (I \cup  J)\setminus\Set{a}} x \le \sum_{x\in I}  x - a
    +j  = i+2-(a-j)  \le i+1
  \end{equation*}
  and again $c\in \mathcal{N}_n^{i-1}$. Similar computations prove that, if $\puncture{a}{J} \in \mathcal{N}_n^{i}$, then also
  $c \in \mathcal{N}_n^{i}$.
\end{proof}

The following result is now straightforward.
\begin{proposition}\label{prop:size_of_Ni}
  The set  $\mathcal{N}_n^i$ is a  saturated set of  rigid commutators
  and
  $\Span{\mathcal{N}_n^{i}}                                        \le
  N_{\Sigma_n}\bigl(\Span{\mathcal{N}_n^{i-1}}\bigr)$.       Moreover,
  $\Size{\Span{\mathcal{N}_n^{i}}}=2^{\Size{\mathcal{N}_n^i}}$.
\end{proposition}
\begin{proof}
  The claim  follows from Lemma~\ref{lem:rigid_commutators_normalize},
  Fact~\ref{rem:reduction_for_N_v2}                                and
  Corollary~\ref{cor:saturated_subgroups}.
\end{proof}

We conclude this section with our  main result showing that the $i$-th
term  of  the  normalizer  chain  is actually  generated  by  the  set
$\mathcal{N}_n^i$ of rigid  commutators defined in Eq.~\eqref{def:Ni}.
We        prove,        indeed,       that        the        inclusion
$\Span{\mathcal{N}^i_{n}}                                          \le
N_{\Sigma_n}\bigl(\Span{\mathcal{N}_n^{i-1}}\bigr)$   shown   in   the
previous proposition is actually an equality.

\begin{theorem}\label{thm:normalizers_indices}
  For $i \leq  n-2$, the group $\Span{\mathcal{N}_n^i}$  is the $i$-th
  term $N_n^{i}$ of the normalizer chain.
\end{theorem}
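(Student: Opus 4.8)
The plan is to prove the statement by induction on $i$, establishing the reverse inclusion $N_n^i \le \Span{\mathcal{N}_n^i}$ to complement the containment $\Span{\mathcal{N}_n^i} \le N_{\Sigma_n}(\Span{\mathcal{N}_n^{i-1}}) = N_n^i$ already secured in Proposition~\ref{prop:size_of_Ni} (using the inductive hypothesis that $\Span{\mathcal{N}_n^{i-1}} = N_n^{i-1}$). The base cases $i \in \{0,1\}$ are furnished by Propositions~\ref{prop:U_as_normalizer} and~\ref{prop:n1}, so I would assume $2 \le i \le n-2$ and that $\Span{\mathcal{N}_n^{i-1}} = N_n^{i-1}$. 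Since $\Size{\Span{\mathcal{N}_n^i}} = 2^{\Size{\mathcal{N}_n^i}}$ by Corollary~\ref{cor:saturated_subgroups}, it suffices to show that $N_n^i = N_{\Sigma_n}(N_n^{i-1})$ has order exactly $2^{\Size{\mathcal{N}_n^i}}$; equivalently, that no rigid commutator outside $\mathcal{N}_n^i$, together with $\Span{\mathcal{N}_n^i}$, can normalize $\Span{\mathcal{N}_n^{i-1}}$.

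\noindent The core of the argument is therefore a \emph{maximality} claim: if $g \in \Sigma_n$ normalizes $N_n^{i-1} = \Span{\mathcal{N}_n^{i-1}}$, then $g \in \Span{\mathcal{N}_n^i}$. Using the unique factorization of Theorem~\ref{thm:unique_representation}, I would write $g$ as an ordered product of rigid commutators and argue that each factor $[Y]$ appearing in $g$ must itself normalize $N_n^{i-1}$ modulo the lower-indexed factors. The key quantitative input is the commutator formula of Proposition~\ref{prop:punctured}: conjugating a generator $\puncture{a}{J} \in \mathcal{N}_n^{i-1}$ by a rigid commutator $\puncture{b}{I}$ produces $\puncture{\max(a,b)}{(I\cup J)\setminus\{\min(a,b)\}}$, and membership in $\mathcal{N}_n^{i-1}$ is governed entirely by the puncture-sum bound $\sum_{x} x \le i+1-(n-a)$ of Fact~\ref{lem:N_i-1}. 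So a candidate normalizing commutator $\puncture{b}{I}$ forces its puncture-sum to fall within the range prescribed by $\mathcal{N}_n^i$; any commutator violating the bound of Eq.~\eqref{def:Ni} would move some generator of $N_n^{i-1}$ to a rigid commutator with puncture-sum too large to lie in $N_n^{i-1}$, contradicting normalization. The inductive reduction via Fact~\ref{rem:reduction_for_N} — which identifies $\mathcal{N}_n^i \cap (S_1 \ltimes \dots \ltimes S_{n-1})$ with $\mathcal{N}_{n-1}^{i-1}$ — lets me handle the part of $g$ supported below level $n$ by the inductive hypothesis in $\Sigma_{n-1}$, leaving only the commutators based at $n$ to be controlled directly by the puncture-sum count $b_{i+2}$.

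\noindent The main obstacle I anticipate is the converse counting: showing there is \emph{no room} for extra generators, i.e.\ that the index $\Size{N_n^i : N_n^{i-1}}$ is exactly $2^{\Size{\mathcal{W}_{n,i+2}} + \dots}$ and not larger. Proposition~\ref{prop:size_of_Ni} gives one inequality for free, but pinning down the exact order of the full normalizer in $\Sigma_n$ requires ruling out normalizing elements whose leading factor is a rigid commutator based at some level $a < n$ with an admissible-looking puncture that nonetheless sends a generator outside $\mathcal{N}_n^{i-1}$ once the cross-terms from lower factors are accounted for. I would manage this by a careful \emph{top-down} sweep: process the factors of a putative normalizing $g$ in decreasing order of base, at each stage using Corollary~\ref{cor:homogeneus_components2} to isolate the $S_n$-component, and check that the action on the distinguished generators $t_a$ and $u_{aj}$ of $N_n^{i-1}$ (cases~\eqref{item:b} and~\eqref{item:c} of Fact~\ref{lem:N_i-1}) pins down the admissible punctures precisely. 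The delicate point is that the bound in case~\eqref{item:a} is tight: a commutator $\puncture{n}{I}$ with $\sum_{x\in I} x = i+3$ would, upon commuting with a suitable $t_a$, produce a factor with puncture-sum exactly $i+2-(n-a)$ that slips back into $N_n^{i-1}$ only for special $a$, so the failure of normalization must be witnessed by at least one generator — and verifying that such a witness always exists is where the combinatorics of distinct parts, hence Euler's theorem, genuinely enters.
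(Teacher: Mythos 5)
Your plan has the same architecture as the paper's proof: induction with base cases from Propositions~\ref{prop:U_as_normalizer} and~\ref{prop:n1}, the inclusion $\Span{\mathcal{N}_n^i}\le N_n^i$ from Proposition~\ref{prop:size_of_Ni}, a split of a normalizing element into its $\Sigma_{n-1}$-part (handled inductively via Fact~\ref{rem:reduction_for_N} --- note this forces a \emph{double} induction, with the hypothesis holding for all $m\le n$, not just for $n$ as you state it) and its $S_n$-part. The genuine gap is that the one step which is actually new --- showing $N_{S_n}(N_n^{i-1})=\Span{\mathcal{N}_n^i\cap S_n}$, i.e.\ that no element of $S_n$ outside this span normalizes $N_n^{i-1}$ --- is left as a declaration of intent, and the sketch you give of how to close it would fail. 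A general element of $S_n$ to be excluded is a \emph{product} $x=\prod_I\puncture{n}{I}^{e_I}$ over puncture sets with $\sum_{y\in I}y\ge i+3$, and when you commute $x$ with a test generator the images of different factors can coincide and cancel: by Proposition~\ref{prop:punctured}, $\bigl[\puncture{n}{J\cup\Set{l}},u_{l,\,l-1}\bigr]=\bigl[\puncture{n}{J\cup\Set{l,l-1}},u_{l,\,l-1}\bigr]=\puncture{n}{J\cup\Set{l-1}}$, so the product of these two ``illegal'' commutators commutes with $u_{l,\,l-1}$ and that test is blind to it. Your argument that ``any commutator violating the bound would move some generator outside $N_n^{i-1}$'' works for a single rigid commutator, not for a product; one needs the stratification of the index sets by their minimum, $\mathcal{I}=\bigcup_l\mathcal{I}_l$, testing the stratum $\mathcal{I}_l$ against the specific generator $u_{l,\,l-1}$ (or $t_1$ when $l=1$) and eliminating exponents stratum by stratum, which is exactly how the paper's proof resolves the cancellation.

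Two further points in your last paragraph are off. First, your $t_a$ computation is inverted: for $a\in I$ with $\sum_{y\in I}y=i+3$, the commutator $[\puncture{n}{I},t_a]=\puncture{n}{I\setminus\Set{a}}$ has puncture sum $i+3-a\le i+1$, hence lies back in $\mathcal{N}_n^{i-1}$ for \emph{every} $a\ge 2$; among the translations only $t_1$ ever witnesses a violation, which is precisely why the generators $u_{l,\,l-1}$ of type~\eqref{item:c} in Fact~\ref{lem:N_i-1} are indispensable as witnesses. Second, your claim that ``the combinatorics of distinct parts, hence Euler's theorem, genuinely enters'' in producing a witness is a misdiagnosis: Euler's theorem plays no role anywhere in the proof of this theorem (it only names the sequence $\Set{b_j}$ in the subsequent corollary). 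The witness is always the single explicit commutator $u_{\min(I),\,\min(I)-1}$ (resp.\ $t_1$ if $1\in I$), and the verification is one line: replacing $\min(I)$ by $\min(I)-1$ in the puncture set lowers the sum by exactly $1$, from $\ge i+3$ to $\ge i+2>i+1$, which violates the bound of Fact~\ref{lem:N_i-1}. Without the explicit witnesses and the stratified processing of cancellations, the counting you correctly identify as the crux is not established, so the proposal as it stands does not prove the theorem.
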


\begin{proof}
  The  cases  $i=0$  and  $i=1$ has  been  addressed  respectively  in
  Propositions~\ref{prop:U_as_normalizer}   and   \ref{prop:n1}.    We
  assume      by       induction      on      $i\ge       2$      that
  $N_m^{j}=\Span{\mathcal{N}_m^j}$   for   all   $m\le   n$   whenever
  $j<i \le m-2$.  In particular
  \begin{equation*}N^j_m\cap
    \Sigma_{m-1}=N^{j-1}_{m-1}=\Span{\smash{\mathcal{N}_{m-1}^{j-1}}}.
  \end{equation*}
  Notice that
  \begin{equation*}
    \begin{split}
      \Span{\mathcal{N}_n^i}\cap \Sigma_{n-1} &=\Span{\mathcal{N}_{n-1}^{i-1}} \\
      &= N_{n-1}^{i-1}=  N_{\Sigma_{n-1}}(N_{n-1}^{i-2})  \\
      &=N_{\Sigma_{n-1}}(N_{n}^{i-1}\cap \Sigma_{n-1})\\
      &=N_{\Sigma_{n-1}}(N_{n}^{i-1}\cap       \Sigma_{n-1})      \cap
      N_{\Sigma_{n-1}}(N_{n}^{i-1}\cap                           S_n)=
      N_{\Sigma_{n-1}}(N_{n}^{i-1}),
    \end{split}
  \end{equation*}
  where the first equality in the  last line holds since the following
  inclusions
  \begin{equation*}
    [\mathcal{N}_{n-1}^{i-1}, \mathcal{N}_{n}^{i-1}]
    \subseteq  [\mathcal{N}_{n}^{i}, \mathcal{N}_{n}^{i-1}]  \subseteq
    \mathcal{N}_{n}^{i-1}\cup\Set{[\emptyset]}
  \end{equation*}
  imply                                                           that
  $N_{\Sigma_{n-1}}(N_{n}^{i-1}\cap       \Sigma_{n-1})      \subseteq
  N_{\Sigma_{n-1}}(N_{n}^{i-1}\cap  S_n)$.  As  $S_n$  is abelian,  we
  have
  \begin{equation}\label{eq:NNSn}
    \begin{split}
      N^i_n&=N_{\Sigma_n}(N_{n}^{i-1})     =    N_{\Sigma_{n-1}\ltimes
        S_n}(N_{n}^{i-1})                                          \\&
      =N_{\Sigma_{n-1}}(N_{n}^{i-1})N_{S_n}(N_{n}^{i-1})             =
      \Span{\mathcal{N}_{n-1}^{i-1}} N_{S_n}(N_{n}^{i-1}).
    \end{split}
  \end{equation}

  We         are         then        left         to         determine
  $N_{S_n}(N_{n}^{i-1})=          \Set{x\in          S_n          \mid
    [x,\mathcal{N}_{n-1}^{i-1}]\subseteq  N_{n}^{i-1}\cap S_n}$.   Let
  us point  out that,  by Eq.~\eqref{eq:C_lk}  and Eq.~\eqref{def:Ni},
  the groups
  \begin{equation*}
    A\deq   \Span{\mathcal{N}^{i}_n  \cap   S_n}\text{  and   }  B\deq
    \Span{\vphantom{\bigcup}\smash{\bigcup_{j\ge i+1}}\mathcal{W}_{n,\,j+2}}
  \end{equation*} 
  have   trivial   intersection   and  that   $S_n=A\times   B$.    By
  Lemma~\ref{lem:rigid_commutators_normalize}  we have  that $A$  is a
  subgroup  of $N_{n}^{i}\cap  S_n$,  for $1  \le j  \le  i$, so  that
  $N_{n}^{i}=A\times H$ where
  \begin{equation*}
    H\deq  \Big\{x\in  \Span{\vphantom{\bigcup}\smash{\bigcup_{j\ge i+1}}  \mathcal{W}_{n,\,j+2}}
    \,\,\Big|        \,\,        [x,\mathcal{N}_{n-1}^{i-1}]\subseteq
    N_{n}^{i-1}\cap S_n\Big\}.\vphantom{\bigcup_{j\ge i+1}}
  \end{equation*}
  We denote a generic element of $H$ by
  \begin{equation*}
    x\deq \prod_{I\in \mathcal{I}}\puncture{n}{I}^{e_I},
  \end{equation*}
  where the  product is taken  over the  set $\mathcal{I}$ of  all the
  subsets      $I\subseteq      \Set{1,\dots,n-1}$      such      that
  $\sum_{y\in   I}y  \ge   i+3$.   For   $1   \leq  l   \leq  n$   let
  $\mathcal{I}_l=\Set{I\in   \mathcal{I}    \mid   \min(I)=l}$.    Let
  $u=  u_{l,\,  l-1}$ if  $l>  1$,  or  $u=t_1=[1]$ if  $l=1$.   Since
  $x\in H$, we have that
  \begin{equation*}
    [x, u] = \begin{cases} \prod_{I \ni l}\puncture{n}{(I\cup
        \Set{l-1})\setminus   \Set{l}}^{e_I}& \text{if $l>1$}\\
      \prod_{I    \ni   1}\puncture{n}{I\setminus   \Set{1}}^{e_I}& \ \text{if $l=1$}\\
    \end{cases}
  \end{equation*} 
  belongs to  $ \mathcal{N}_{n}^{i-1},$  and in particular  $e_I\ne 0$
  implies
  \begin{equation*}  \sum_{y\in  (I\cup  \Set{l-1})\setminus  \Set{l}}
    y\le i+1.
  \end{equation*}
  If $I\in \mathcal{I}_l$ then
  \begin{equation*}\sum_{y\in (I\cup \Set{l-1})\setminus \Set{l}} y=
    \sum_{y\in I}
    y-l+(l-1)=\sum_{y\in  I}  y-1  \ge  i+2,\end{equation*}  so  that  $e_I=0$  for
  $I\in                       \mathcal{I}_l$.                       As
  $\mathcal{I}=\bigcup_{l=1}^n\mathcal{I}_l$,  we   have  $H=\Set{1}$.
  This  finally  shows that  $N_{S_n}(N^{n}_{i-1})=A=N^n_{i}\cap  S_n =
  \Span{\mathcal{N}_n^i\cap S_n}$
  and, by Eq.~\eqref{eq:NNSn}, 
  that
  \begin{equation*}
    \begin{split}
      N^n_i=N_{\Sigma_n}(N^n_{i-1})&=   \Span{\mathcal{N}_{n-1}^{i-1}}
      N_{S_n}(N_{n}^{i-1}) = \Span{\mathcal{N}_{n-1}^{i-1} \cup
  	(\mathcal{N}_n^i\cap S_n)} \\
      &=     \Span{(\mathcal{N}_{n}^{i}     \cap     \Sigma_{n-1})\cup
        (\mathcal{N}_n^i\cap S_n)} = \Span{\mathcal{N}_n^i},
    \end{split}
  \end{equation*}
  as claimed.
\end{proof}

\subsection{Partitions into at least two distinct parts} This work was
motivated   by   the   computational    evidence   that   the   number
$c_{i}\deq \log_2\Size{N^{i-2}_n : N^{i-3}_n}$ does not depend on $n$,
if $3\le i \le n$ \cite{Aragona2020}.  The first terms of the sequence
$\Set{c_i}$ coincide with  those of the sequence  $\{a_i\}$ defined in
\cite[\url{https://oeis.org/A317910}]{OEIS}, where $a_i$ is the $i$-th
partial sum  of the sequence $\{b_i\}$,  where $b_i$ is the  number of
partitions of $i$ into at least two distinct parts. Some values of the
aforementioned sequences are displayed in Table~\ref{tab:two}.
\begin{table}[hbt]
  {\renewcommand{\arraystretch}{1.3}
    \begin{tabular}{c||c|c|c|c|c|c|c|c|c|c|c|c|c|c|c}
      $i$ & $0$ & $1$ & $2$ & $3$ & $4$ & $5$ & $6$ & $7$ & $8$ & 9& 10 &11&12&13&14\\
      \hline\hline
      ${b_i}$ & 	0& 0& 0& 1& 1& 2& 3& 4& 5& 7& 9& 11& 14& 17& 21\\
      \hline
      $a_i$ &0&0& 0& 1& 2& 4& 7& 11& 16& 23& 32& 43& 57& 74&95
    \end{tabular}
    \bigskip }
  \caption{First values of the sequences $a_i$ and $b_i$}
  \label{tab:two}
\end{table}
\newline  We  have  developed  the rigid  commutator  machinery  as  a
theoretical tool of investigation.  It  is not surprising anymore that
the equality $b_i=\Size{\mathcal{W}_{n,i}}$, where $\mathcal{W}_{n,i}$
is  defined by  Eq.~\eqref{eq:C_lk}, is  the link  with the  mentioned
sequence.   This  combinatorial  identity,  Eq.~\eqref{eq:generators},
Proposition~\ref{prop:size_of_Ni}                                  and
Theorem~\ref{thm:normalizers_indices}  give  at last a  positive   answer  to
Conjecture~1 in \cite{Aragona2020}.
\begin{corollary}
  For $1\le i\le n-2$, the number $\log_{2}\Size{N^{i}_n : N^{i-1}_n}$
  is  independent  of $n$.   It  equals  the  $(i+2)$-th term  of  the
  sequence $\Set{a_j}$ of the partial sums of the sequence $\Set{b_j}$
  counting the number of partitions of  $j$ into at least two distinct
  parts.
\end{corollary}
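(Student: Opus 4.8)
The plan is to convert the index into a count of rigid generators and then recognise that count as a partial sum of the numbers $b_j$. Since the normalizer chain is ascending we have $N_n^{i-1}\le N_n^i$, and by Theorem~\ref{thm:normalizers_indices} these two terms equal $\Span{\mathcal{N}_n^{i-1}}$ and $\Span{\mathcal{N}_n^{i}}$ respectively. Proposition~\ref{prop:size_of_Ni} gives $\Size{\Span{\mathcal{N}_n^{i}}}=2^{\Size{\mathcal{N}_n^i}}$, while the recursion in Eq.~\eqref{def:Ni} exhibits $\mathcal{N}_n^{i-1}$ as a subset of $\mathcal{N}_n^{i}$. First I would combine these to obtain
\begin{equation*}
  \log_2\Size{N_n^i:N_n^{i-1}}=\Size{\mathcal{N}_n^i}-\Size{\mathcal{N}_n^{i-1}}=\Size{\mathcal{N}_n^i\setminus \mathcal{N}_n^{i-1}},
\end{equation*}
and then read the newly appended generators directly off Eq.~\eqref{def:Ni}: the set difference is exactly the disjoint union $\dot\bigcup_{j=1}^{i}\mathcal{W}_{n+j-i,\,j+2}$, whence
\begin{equation*}
  \log_2\Size{N_n^i:N_n^{i-1}}=\sum_{j=1}^{i}\Size{\mathcal{W}_{n+j-i,\,j+2}}.
\end{equation*}

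The crux is the combinatorial evaluation of each summand. By Eq.~\eqref{eq:C_lk}, $\Size{\mathcal{W}_{a,b}}$ counts the subsets of $\Set{1,\dots,a-1}$ of size at least $2$ whose elements sum to $b$, that is, the partitions of $b$ into at least two distinct parts all of whose parts are bounded by $a-1$. I would note that any partition of $b$ into at least two distinct parts has largest part at most $b-1$, so the inequality $b-1\le a-1$, i.e.\ $b\le a$, renders the part-size constraint vacuous and yields $\Size{\mathcal{W}_{a,b}}=b_b$. With $a=n+j-i$ and $b=j+2$ the condition $b\le a$ becomes $i\le n-2$, which is precisely the standing hypothesis; hence $\Size{\mathcal{W}_{n+j-i,\,j+2}}=b_{j+2}$ for every $1\le j\le i$.

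Combining the two displays then gives
\begin{equation*}
  \log_2\Size{N_n^i:N_n^{i-1}}=\sum_{j=1}^{i}b_{j+2}=\sum_{k=3}^{i+2}b_k.
\end{equation*}
Since the smallest sum of two distinct positive integers is $1+2=3$, no integer below $3$ admits a partition into at least two distinct parts, so $b_0=b_1=b_2=0$ and the last sum equals $\sum_{k=0}^{i+2}b_k=a_{i+2}$, the $(i+2)$-th partial sum of $\Set{b_j}$. As the right-hand side contains no reference to $n$, this simultaneously establishes the independence of $n$ and the asserted value $a_{i+2}$. I expect the only delicate point to be the verification that the part-size bound $a-1$ built into $\mathcal{W}_{a,b}$ never truncates a valid partition under the hypothesis $i\le n-2$; the remaining steps are bookkeeping on the recursive definition of $\mathcal{N}_n^i$ together with the already established representation and cardinality results.
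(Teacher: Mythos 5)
Your proposal is correct and follows essentially the same route as the paper: the paper's (very compressed) proof likewise assembles Theorem~\ref{thm:normalizers_indices}, Proposition~\ref{prop:size_of_Ni}, and the combinatorial identity $\Size{\mathcal{W}_{a,b}}=b_b$ coming from Eq.~\eqref{eq:C_lk} and Eq.~\eqref{def:Ni}. You have merely spelled out the bookkeeping the paper leaves implicit (the set-difference count, the vacuity of the part-size bound when $i\le n-2$, and the shift $\sum_{k=3}^{i+2}b_k=a_{i+2}$), all of which is accurate.
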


\section{Normalizers                    of                   saturated
  subgroups}\label{sec:normalizer_saturated}
In    this   section    we    will   prove    that   the    normalizer
$N\deq N_{\Sigma_n}(G)$ of  a saturated subgroup $G$  of $\Sigma_n$ is
also saturated,  provided that $T\le  N$, and thus  we can use  our rigid
commutator machinery  in the  computation of  $N$. In  particular, for
$i\le n-2$,  the machinery  could be  used as  an alternative  tool to
derive    the   theoretical    description    of    $N_n^i$   as    in
Theorem~\ref{thm:normalizers_indices}. Even  if we do not  have such a
description when $i>n-2$, the machinery can be anyway used  to
efficiently compute via $\mathsf{GAP}$ the complete normalizer chain.\newline
\newline
We denote below by $N_i$ the intersection $N\cap S_i$.

\begin{proposition}\label{prop:normalizer_homogeneous}
  If   $G$    is   a    saturated   subgroup   of    $\Sigma_n$,   and
  $N=N_{\Sigma_n}(G)$ is its normalizer in $\Sigma_n$, then
  \begin{equation*}
    N= N_1 \ltimes  \cdots \ltimes N_n= \ltimes_{i=1}^n
    N_{S_i}(G).
  \end{equation*}
  In particular if  $x\in N$ and $x=x_1\cdots x_n$,  with $x_i\in S_i$
  for all $1 \leq i \leq n$, then $x_i\in N_i$ for all~$1 \leq i \leq n$.
\end{proposition}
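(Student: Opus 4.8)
The plan is to deduce everything from the \emph{homogeneity} of saturated subgroups recorded in Corollary~\ref{cor:homogeneus_components2}. Writing $G_i\deq G\cap S_i$, that corollary gives $G=G_1\ltimes\cdots\ltimes G_n$ and, more importantly, says that $G$ is closed under taking the $S_i$-components of its elements. Since $N_{S_i}(G)=N\cap S_i\eqqcolon N_i$ by definition, and since $N\supseteq N_1\cdots N_n$ automatically (each $N_i\le N$ and $N$ is a group), the whole statement reduces to the single assertion that $N$ is itself \emph{homogeneous}: whenever $x=x_1\cdots x_n\in N$ with $x_i\in S_i$, one must have $x_i\in N$ for every $i$. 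The semidirect refinement $N=N_1\ltimes\cdots\ltimes N_n$ is then inherited from $\Sigma_n=S_1\ltimes\cdots\ltimes S_n$ (Section~\ref{sec:prel}) together with the normality of $\Sigma_{\ge i}\deq S_i\cdots S_n$ in $\Sigma_n$.

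I would prove homogeneity of $N$ by induction on $n$ using the top splitting $\Sigma_n=\Sigma_{n-1}\ltimes S_n$, where $S_n\isnorm\Sigma_n$ is abelian. By Corollary~\ref{cor:homogeneus_components2} we have $G=\bar G\ltimes G_n$ with $\bar G\deq G\cap\Sigma_{n-1}$ and $G_n=G\cap S_n$, and $\bar G$ is again saturated in $\Sigma_{n-1}$: it is generated by the rigid commutators of $G$ based at indices $<n$, a set still closed under commutators by Proposition~\ref{prop:punctured} (cf.\ Remark~\ref{rem:sturated_generation}). The first step is to show $N=(N\cap\Sigma_{n-1})\ltimes(N\cap S_n)$. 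Indeed, given $x=y\,x_n\in N$ with $y\in\Sigma_{n-1}$ and $x_n\in S_n$, the factor $x_n$ acts trivially on the abelian normal subgroup $S_n$, so $y$ induces the same conjugation on $S_n$ as $x$ and hence normalizes $G\cap S_n=G_n$; reducing modulo $S_n$ shows that $y$ normalizes $\bar G$; as $y\in\Sigma_{n-1}$ preserves the decomposition $G=\bar G\ltimes G_n$, these together give $y\in N$, and then $x_n=y^{-1}x\in N$.

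It remains to split $N\cap\Sigma_{n-1}$, and here the key observation is that $N\cap\Sigma_{n-1}=N_{\Sigma_{n-1}}(\bar G)\cap\operatorname{Stab}_{\Sigma_{n-1}}(G_n)$, where $\operatorname{Stab}$ denotes the stabiliser of the subspace $G_n\le S_n$ under the $\F_2$-linear conjugation action of $\Sigma_{n-1}$ on $S_n$. Two homogeneous subgroups $H,H'\le\Sigma_{n-1}$ (i.e.\ with $H=\prod_{i<n}(H\cap S_i)$) have the convenient feature that $H\cap H'$ is again homogeneous, since the $S_i$-components of a common element lie in both factors. Now $N_{\Sigma_{n-1}}(\bar G)$ is homogeneous by the inductive hypothesis applied to the saturated subgroup $\bar G\le\Sigma_{n-1}$, so the proposition comes down to proving that $\operatorname{Stab}_{\Sigma_{n-1}}(G_n)$ is homogeneous. \emph{This is the step I expect to be the main obstacle.}

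For it I would exploit the explicit action from Proposition~\ref{prop:punctured}. On the basis $\{\puncture{n}{I}\mid I\subseteq\{1,\dots,n-1\}\}$ of $S_n$ (Lemma~\ref{lem:basis}), conjugation of $\puncture{n}{I}$ by a rigid commutator $\puncture{i}{K}\in S_i$ multiplies it by $\puncture{n}{(I\cup K)\setminus\{i\}}$ when $i\in I$ and fixes it otherwise; regarding $S_n$ as an $\F_2$-space, each $x_i\in S_i$ therefore acts as $1+D_i$, where the lowering operator $D_i$ is supported on punctures containing $i$, deletes that $i$, and inserts only indices $<i$. Grading $S_n$ by the value $\sum_{e\in I}2^{e-1}$ of the puncture makes every $D_i$ strictly lowering, and, crucially, for $p>i$ the operator $D_i$ neither creates nor destroys the index $p$, whereas $D_p$ deletes it. Fixing $p$ and splitting $S_n=W_1^{(p)}\oplus W_0^{(p)}$ according to whether $p$ belongs to the puncture, the factors $x_1,\dots,x_{p-1}$ become block-diagonal and $x_p$ becomes block-triangular with identity diagonal blocks; since $G_n=G\cap S_n$ is spanned by basis vectors (Corollary~\ref{cor:saturated_subgroups}, Lemma~\ref{lem:basis}) it splits as $(G_n\cap W_1^{(p)})\oplus(G_n\cap W_0^{(p)})$. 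A direct block computation then shows that stabilisation of $G_n$ by $x_1\cdots x_p$ forces both $x_p$ and $x_1\cdots x_{p-1}$ to stabilise $G_n$; a downward induction on $p=n-1,\dots,1$ peels off the components one at a time, proving that each $x_i$ stabilises $G_n$ and completing the induction.
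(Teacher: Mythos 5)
Your proof is correct, but it takes a genuinely different route from the paper's. The paper never leaves the rigid-commutator calculus and uses no induction on $n$: writing $x=x_{i_1}\cdots x_{i_k}\in N$ with $i_1<\dots<i_k$, it shows that the \emph{lowest} component $x_{i_1}$ normalizes $G$ by expanding $[x,c]$, for each rigid generator $c$ of $G$, into an ordered product of rigid commutators and observing that the factors coming from $[x_{i_1},c]^{x_{i_2}\cdots x_{i_m}}$ (index sets containing $i_1$) cannot cancel against those coming from $[x_{i_2}\cdots x_{i_k},c]$ (index sets avoiding $i_1$); uniqueness of rigid factorization (Corollaries~\ref{cor:homogeneus_components} and~\ref{cor:homogeneus_components2}) then forces $[x_{i_1},c]^{x_{i_2}\cdots x_{i_m}}$, hence $[x_{i_1},c]$, into $G\cap S_i$, and induction on the number $k$ of components finishes. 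You instead peel off the \emph{highest} layer: induction on $n$ through $\Sigma_n=\Sigma_{n-1}\ltimes S_n$, the identification $N\cap\Sigma_{n-1}=N_{\Sigma_{n-1}}(\bar G)\cap\operatorname{Stab}_{\Sigma_{n-1}}(G_n)$, and block linear algebra on the $\F_2$-space $S_n$ for the stabilizer. Your key step is sound: with respect to $W_1^{(p)}\oplus W_0^{(p)}$ the operator of $x_1\cdots x_p$ is block lower-triangular with diagonal blocks $B$, $C$ and corner $D_pB$; since $G_n$ is spanned by basis vectors it splits as $V_1\oplus V_0$ with $V_\epsilon=G_n\cap W_\epsilon^{(p)}$, so stabilization gives $BV_1\subseteq V_1$, injectivity plus finite dimension give $BV_1=V_1$, hence $D_pV_1=D_pBV_1\subseteq V_0$ and $x_p$ itself stabilizes $G_n$. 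The paper's approach buys economy and keeps the argument in the same style as the subsequent Lemma~\ref{lem:normalizer_rigidc_components}; yours buys structural transparency, replacing the paper's most delicate step (the cancellation bookkeeping in the case $i_1<\dots<i_m<i\le i_{m+1}$) by an elementary triangularity argument, at the cost of extra scaffolding. In a full write-up you should record two facts you use implicitly: that $\bar G=G\cap\Sigma_{n-1}$ is saturated in $\Sigma_{n-1}$ (its rigid generators are those of $G$ not based at $n$, a set still closed under commutation), and that conjugation by $x_i$ equals $1+D_i$ with $D_i$ supported on punctures containing $i$ (the cross terms $D_{K_a}D_{K_b}$ vanish because the image of $D_{K_b}$ consists of punctures not containing $i$).
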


\begin{proof}
  Let   $x\in   N$   and    write   $x=x_{i_1}\dots   x_{i_k}$   where
  $   1   \le  i_1   <   \dots   <   i_k   \le  i_{k+1}\deq   n$   and
  $x_ {i_j}\in S_{i_j}$, for $1 \leq j \leq k$.  In order to prove our
  claim we  first show that  $[x_{i_1},c]\in G$ for  every non-trivial
  rigid commutator  $c$ of  $G$.  Since  $G$ is  generated by  its own
  non-trivial rigid  commutators, it will follow  that $x_{i_1}\in N$.   As a
  consequence, also $x_{i_2}\dots  x_{i_k} \in N$. Thus,  we may argue
  by  induction  on  $k$  to   obtain  that  $x_{i_j}\in  N$  for  all
  $1\leq j \leq k$.
	
  Let  $i$  be  such  that  $c\in  G\cap  S_i$.   Suppose  first  that
  $i  <  i_1$.  If  $[c,x_{i_1}]=  1$,  then $[c,x_{i_1}]\in  G$.   If
  $[c,x_{i_1}]\ne   1$,   then    $[c,x]=[c,x_{i_1}]h\in   G$,   where
  $[c,x_{i_1}]\in   S_{i_1}$   and    $h\in   \prod_{t>i_1}S_t$.    By
  Corollary~\ref{cor:homogeneus_components2}     we    obtain     that
  $[c,x_{i_1}]\in  G\cap S_{i_1}\le  G$.   If  $i=i_1$, then  trivially
  $[c,x_{i_1}]=1\in   G$.    The   last   possibility  is
  $i_1 <\dots <  i_m < i\le i_{m+1}$ for some  $m\le k$.  Suppose that
  $[x_{i_1},c]\ne 1$. In this case
  \begin{align*}
    G\ni [x,c] &= [x_{i_1}\dots x_{i_k},c] = [x_{i_1},c]^{x_{i_2}\dots x_{i_k}} \cdot [x_{i_2}\dots x_{i_k},c] \\
               &= \bigl([x_ {i_1},c]^{x_ {i_2}\dots x_ {i_m}}\bigr)^{x_ {i_{m+1}}\dots x_ {i_k}} \cdot  [x_ {i_2}\dots x_ {i_k},c]\\
               &=[x_ {i_1},c]^{x_ {i_2}\cdots x_ {i_m}} \cdot [[x_ {i_1},c]^{x_ {i_2}\cdots x_ {i_m}},x_ {i_{m+1}}\cdots x_ {i_k}] \cdot  [x_ {i_2}\dots x_ {i_k},c].
  \end{align*}
  Let us consider the commutators
  \begin{align*}
    [x_ {i_1},c]^{x_ {i_2}\cdots x_ {i_m}}&=[x_ {i_1},c][[x_ {i_1},c], x_ {i_2}\cdots x_ {i_m}]=d_1\cdots d_t,\\
    [[x_ {i_1},c]^{x_ {i_2}\cdots x_ {i_m}},x_ {i_{m+1}}\cdots x_ {i_k}]\ &= m_1\cdots m_r,\\
    [x_ {i_2} \cdots x_ {i_m}\cdot x_ {i_{m+1}} \cdots x_ {i_k},c]&=f_1\cdots f_s \cdot l_1\cdots l_u,
  \end{align*}
  written as ordered product of distinct rigid commutators
  \begin{equation*}
    d_1,\dots,  d_t,  f_1,\dots,  f_s  \in
    G\cap S_{i},
  \end{equation*}
  and
  \begin{equation*}
    m_1,\ldots,   m_r,  l_1,\dots,   l_u  \in   G  \cap
    (S_{i+1}\ltimes \cdots \ltimes  S_{n}).
  \end{equation*}
  Notice                                                          that
  $\Set{d_1,\dots,  d_t} \cap  \Set{ f_1,\dots,  f_s}=\emptyset$ since
  the commutators  $d_i$ are of the  form $[X]$ for some  set $X$ with
  $i_1\in X$, whereas the commutators $f_j$  are of the form $[Y]$ for
  some    set     $Y$    with    $i_1\notin    Y$.      This    yields
  $[x_  {i_1},c]^{x_  {i_2}\cdots x_  {i_m}}  \in  G\cap S_i$  and  so
  $[x_ {i_1},c] \in G\cap S_i \le G$.
\end{proof}

\begin{lemma}\label{lem:normalizer_rigidc_components}
  Suppose that $G$ is a saturated subgroup of $\Sigma_n$ normalized by
  $T$. If $x_1,\dots, x_k\in S_j$  are distinct rigid commutators such
  that   $x=x_1\cdots   x_k\in   N$,   then   $x_i\in   N$   for   all
  $1\leq i \leq k$.
\end{lemma}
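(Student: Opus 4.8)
The plan is to deduce the statement from Proposition~\ref{prop:normalizer_homogeneous} together with a careful analysis of the conjugation action of the translation group $T$ on the elementary abelian group $S_j$. Since $x=x_1\cdots x_k$ is a product of elements of $S_j$ we have $x\in S_j$, and Proposition~\ref{prop:normalizer_homogeneous} already gives $x\in N\cap S_j=N_{S_j}(G)$. By Lemma~\ref{lem:basis} the rigid commutators based at $j$ form a basis of $S_j$, and each of them equals $\puncture{j}{I}$ for a unique $I\subseteq\Set{1,\dots,j-1}$; write $x_r=\puncture{j}{I_r}$ with the $I_r$ pairwise distinct. The claim is then equivalent to the assertion that the $\F_2$-subspace $V\deq N\cap S_j$ is spanned by the rigid commutators it contains, i.e.\ that $V$ is a coordinate subspace with respect to this basis.

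First I would record the action of $T$. Since $T$ normalizes $G$ we have $T\le N$, so for every $h<j$ the operator $D_h\deq[\,\cdot\,,t_h]$ preserves $V$; by Proposition~\ref{prop:punctured} it is the linear \emph{contraction} sending $\puncture{j}{I}$ to $\puncture{j}{I\setminus\Set{h}}$ when $h\in I$ and to $1$ otherwise, while $[\puncture{j}{I},t_h]=1$ for $h\ge j$. I would then induct on $k$, the case $k\le 1$ being immediate. For $k\ge 2$ the sets $I_r$ are distinct, so some $h<j$ lies in at least one but not all of them, and $[x,t_h]=\prod_{h\in I_r}\puncture{j}{I_r\setminus\Set{h}}$ is a strictly shorter product of distinct rigid commutators lying in $V$, to which the inductive hypothesis applies. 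The other source of information is $G$ itself: for every rigid commutator $c=\puncture{b}{J}\in G$ with $b\le j$ one has, in the abelian group $S_j$, the clean expansion $[x,c]=\prod_r[\puncture{j}{I_r},c]\in G$, whose nonzero factors are $\puncture{j}{(I_r\setminus\Set{b})\cup J}$ for those $r$ with $b\in I_r$ (the factors for $b>j$ land in $S_{\max(b,j)}$ and are treated analogously). When these factors are pairwise distinct, Corollary~\ref{cor:homogeneus_components} forces each of them into $G$, which is exactly the condition that the corresponding $\puncture{j}{I_r}$ normalizes the generator $c$.

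The delicate point, and the step I expect to be the main obstacle, is the direction of the induction: contraction only \emph{lowers} punctures, and by Proposition~\ref{prop:punctured} no commutator with an available element can reinsert a single index while keeping the base equal to $j$, so one cannot simply lift the conclusion from $\puncture{j}{I_r\setminus\Set{h}}$ back to $\puncture{j}{I_r}$. The way I would break this is to let contraction play its \emph{second} role, namely resolving collisions. A collision $[\puncture{j}{I_r},c]=[\puncture{j}{I_{r'}},c]$ occurs by the displayed formula precisely when $I_r\triangle I_{r'}\subseteq J\subseteq\Set{1,\dots,b-1}$; any index $h\in I_r\triangle I_{r'}$ then satisfies $h<b\le j$ and separates $I_r$ from $I_{r'}$, so applying $D_h$ removes one of the two colliding terms. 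Thus I would prove the stronger statement that $V$ is closed under the coordinate projections to the rigid basis, arguing by induction on the total puncture weight $\sum_r\sum_{y\in I_r}y$: at each stage I use a distinguishing index to pass to a shorter, lighter element of $V$, apply the inductive hypothesis there, and combine this with the conditions $[x,c]\in G$ (for $c$ ranging over the rigid generators of $G$) and Corollary~\ref{cor:homogeneus_components} to conclude that every individual $x_r=\puncture{j}{I_r}$ normalizes each such $c$, hence lies in $N$. Making the interaction between these two uses of contraction—shortening versus collision-breaking—fully rigorous, and checking that the bookkeeping also covers the generators of $G$ based above $j$, is the part that will require the most care.
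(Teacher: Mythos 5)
Your overall strategy---contract $x$ by elements of $T$ and induct on total puncture weight---is viable and genuinely different from the paper's argument, but as written it has a gap at exactly the step you flag as ``the part that will require the most care'', and that step is not bookkeeping: it is the heart of the proof. After you apply $D_h$ for a separating index $h\in I_r\triangle I_{r'}\subseteq J$ and invoke the inductive hypothesis, what you know is that the contracted commutator $y=\puncture{j}{I_{r''}\setminus\Set{h}}$ lies in $N$, where $r''$ is whichever of $r,r'$ contains $h$. By itself this says nothing about the colliding value $d=\bigl[\puncture{j}{I_r},c\bigr]$: as you yourself observed at the outset, conclusions cannot be lifted back through $D_h$. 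The missing idea is a \emph{restoration identity}: since $h\in J$, Proposition~\ref{prop:punctured} gives
\begin{equation*}
\bigl[\puncture{j}{I_{r''}\setminus\Set{h}},\puncture{b}{J}\bigr]
=\puncture{j}{\,\bigl((I_{r''}\setminus\Set{h})\cup J\bigr)\setminus\Set{b}}
=\puncture{j}{\,(I_{r''}\cup J)\setminus\Set{b}}
=\bigl[\puncture{j}{I_{r''}},\puncture{b}{J}\bigr]=d,
\end{equation*}
because the removed index $h$ is re-inserted through $J$. Only with this identity does ``$y\in N$'' convert into ``$d\in G$'' (via $[y,c]\in G$), so that the colliding values of $[x,c]$ are recovered, while the values surviving cancellation come directly from Corollary~\ref{cor:homogeneus_components}; without it, your two uses of contraction (shortening and collision-breaking) never actually interact, which is precisely why the argument would not close.

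For comparison, the paper's proof contains this same mechanism but applies it on the dual side: instead of contracting $x$, it contracts the \emph{generators} of $G$. Writing $c_s=\puncture{m_s}{C_s}$ and $x_t=\puncture{j}{X_t}$, it picks $l\in C_s\cap X_t$ and replaces $c_s$ by $[c_s,t_l]=\puncture{m_s}{C_s\setminus\Set{l}}$, which lies in $G$ because $T$ normalizes $G$; the target $d_{s,t}=[c_s,x_t]$ is unchanged because $l$ is restored through $X_t$. Iterating strictly decreases $\Size{C_s}$, until $d_{s,t}$ occurs exactly once in the expansion of $[c,x]\in G$, at which point Corollary~\ref{cor:homogeneus_components} yields $d_{s,t}\in G$; no auxiliary ``stronger statement'' or induction on weight is needed, and $x$ is never touched. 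Your route, once repaired with the identity above, is a legitimate alternative: there the hypothesis on $T$ enters as $T\le N$ (so $D_h$ maps $N\cap S_j$ into itself), whereas in the paper it enters as $[c_s,t_l]\in G$. But in the proposal as it stands, the decisive lifting mechanism is absent, so the proof is incomplete.
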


\begin{proof}
  Let     $c_1,\ldots,    c_h\in     \mathcal    R^*$     such    that
  $G=\Span{c_1,\ldots, c_h}$ and let us write every $c_s$ and $x_t$ in
  punctured         form:        $c_s=\puncture{m_s}{C_s}$         and
  $x_t=\puncture{j}{X_t}$.
	
  Suppose first that $m_s <j$, so that
  \begin{equation}\label{eq:expansion}
    [c_s,x]=\prod_{t=1}^k d_{s,t} \in G\cap S_j,
  \end{equation}
  where
  $d_{s,t}\deq[c_s,x_t]    =     \puncture{j}{C_s\cup    (X_t\setminus
    \Set{m_s})} $.   Notice that  if the commutator  $d_{s,t}$ appears
  only       once      in       the       product,      then,       by
  Corollary~\ref{cor:homogeneus_components},   $d_{s,t}\in   G$.    If
  $C_s\cap  X_t=\emptyset$ for  all $1\leq  t  \leq k$,  then all  the
  non-trivial  $d_{s,t}$ appearing  in  the product  are distinct  and
  hence they appear  only once in the product, so  that $d_{s,t}\in G$
  for all $1 \leq t \leq  k$.  If $C_s\cap X_t\ne \emptyset$, then the
  commutator  $d_{s,t}$  may appear  more  than  once in  the  product
  displayed in  Eq.~\eqref{eq:expansion}.  Let $l\in C_s\cap  X_t$ and
  consider                        the                       commutator
  $c_{s,l}=[c_s, t_l] =  \puncture{m_s}{C_s\setminus\Set{l}} \in G$ as
  $t_l=[l,\dots,1]\in T\le N_{\Sigma_n}(G)$.  Notice that
  \begin{equation*}[    c_{s,l},x_t]   =    \puncture{j}{(C_s\setminus
      \Set{l})\cup  (X_t\setminus   \Set{m_s})}=  \puncture{j}{C_s\cup
      (X_t\setminus \Set{m_s})} =[c_{s},x_t]=d_{s,t}.
  \end{equation*}
  Let  $C=C_s\setminus\Set{l}$.   We  have   determined  a  new  rigid
  commutator    $c=c_{s,l}=\puncture{m_s}{C}\in     G$    such    that
  $\Size{C\cap      X_t}     <      \Size{C_s\cap     X_t}$,      that
  $\Size{C} < \Size{C_s}$  and that $ d_{s,t}=[c,x_t]$  appears in the
  expansion of $[c,x]$. Using the same strategy, after a finite number
  of   steps,  we   obtain   $c=\puncture{m_s}{C}\in   G$  such   that
  $C\cap X_t=\emptyset$.  If $ d_{s,t}=[c,x_t]=[c,x_{t_1}]=d_{s,t_1}$,
  for  some  $t_1\ne  t$,  then $C\cap  X_{t_1}\ne  \emptyset$,  since
  otherwise   $X_t=X_{t_1}$   and  consequently   $x_t=x_{t_1}$   with
  $t\ne t_1$, contrary  to the hypotheses. Thus we may  proceed in the
  same way with $d_{s,t_1}$. Since at each step the cardinality of $C$
  is strictly decreasing, after a finite number
  of   steps  we   find   a   $c\in  G$   and   $x_{t_r}$  such   that
  $d_{s,t}=d_{s,t_1}=\dots =  d_{s,t_r}$ appears only once  in $[c,x]$
  giving $d_{s,t}\in G$.  This finally shows that $d_{s,t}\in G$
  for all $1\leq t \leq k$.\newline
  \newline
  If $j=m_s$ then $x_i$ and $c_s$ commute for all $i$ and there is nothing to prove.\newline
  \newline
  We are left with the case when $m_s>j$. As above, we have
  \begin{equation*}
    [c_s,x]=\prod_{t=1}^k d_{s,t} \in G\cap S_j,
  \end{equation*}
  where
  $d_{s,t}\deq[c_s,x_t] = \puncture{m_s}{(C_s  \setminus \Set{j}) \cup
    X_t} $.  Reasoning as we did for $m_s<j$, we
  obtain that $d_{s,t}\in G$ for all $1\leq t \leq k$.
	
  In  all  the   cases  we  have  proved  that  $x_i\in   N$  for  all
  $1\leq i \leq k$, which is our claim.
\end{proof}
As            an             easy            consequence            of
Proposition~\ref{prop:normalizer_homogeneous}                      and
Lemma~\ref{lem:normalizer_rigidc_components}  we  find  the  following
result.
\begin{theorem}\label{thm:normalizer_saturated}
  The  normalizer  $N$  in  $\Sigma_n$  of  a  saturated  subgroup  of
  $\Sigma_n$ is also saturated, provided that $N$ contains $T$.
\end{theorem}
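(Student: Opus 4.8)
The plan is to show that $N$ is generated by rigid commutators; by Remark~\ref{rem:sturated_generation} this is equivalent to $N$ being saturated, so no separate check that the generating set is closed under commutators is needed. First I would invoke Proposition~\ref{prop:normalizer_homogeneous} to obtain the homogeneous decomposition $N = N_1 \ltimes \cdots \ltimes N_n$, where $N_i = N \cap S_i$. Since every rigid commutator lies in a single $S_i$ and $N$ is the internal semidirect product of the layers $N_i$, it suffices to prove that each $N_i$ is generated by rigid commutators; the generators of $N$ are then recovered by collecting, over all $i$, the rigid commutators contained in the various $N_i$.

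Next I would fix an index $i$ and an arbitrary element $x \in N_i \subseteq S_i$. By Lemma~\ref{lem:basis}, the rigid commutators based at $i$ form a basis of the elementary abelian $2$-group $S_i$, so $x$ admits a unique expression $x = x_1 \cdots x_k$ as a product of pairwise distinct rigid commutators $x_1, \dots, x_k \in S_i \cap \mathcal{R}^*$. Because $x \in N$ and the $x_t$ are distinct rigid commutators all lying in the \emph{same} $S_i$, the hypothesis $T \le N$ allows me to apply Lemma~\ref{lem:normalizer_rigidc_components} (with $j = i$) to conclude that $x_t \in N$ for every $t$. As each $x_t \in S_i$, this yields $x_t \in N \cap S_i = N_i$. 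Hence every $x \in N_i$ is a product of rigid commutators lying in $N_i$, so $N_i$ is generated by the rigid commutators it contains.

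Combining this over all $i$ with the decomposition $N = N_1 \ltimes \cdots \ltimes N_n$ shows that $N$ is generated by the set $\bigcup_{i=1}^n (N_i \cap \mathcal{R})$ of rigid commutators. By Remark~\ref{rem:sturated_generation}, $N$ is therefore saturated, which is the claim.

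Since the two substantial ingredients — the homogeneous splitting of $N$ and the fact that a rigid-commutator factor of an element of $N$ is again in $N$ — are precisely Proposition~\ref{prop:normalizer_homogeneous} and Lemma~\ref{lem:normalizer_rigidc_components}, the proof is essentially a matter of assembling them. The only point that requires care, and the place where the assumption $T \le N$ enters, is the application of Lemma~\ref{lem:normalizer_rigidc_components}: one must exhibit $x$ as a product of \emph{distinct} rigid commutators all based at the same level $i$, which is exactly what the basis description in Lemma~\ref{lem:basis} supplies. I do not anticipate any further obstacle.
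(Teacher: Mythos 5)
Your proof is correct and takes essentially the same approach as the paper: the paper derives this theorem as an immediate consequence of Proposition~\ref{prop:normalizer_homogeneous} and Lemma~\ref{lem:normalizer_rigidc_components}, which are exactly the two ingredients you assemble (together with Lemma~\ref{lem:basis} and Remark~\ref{rem:sturated_generation}). Your write-up merely makes explicit the assembly that the paper leaves to the reader, including the correct observation that $T\le N$ is what licenses the application of Lemma~\ref{lem:normalizer_rigidc_components}.
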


\begin{remark}\label{rem:normal_izer_closure}
  Let $\mathcal{A}$ and $\mathcal{B}$  be two subsets of $\mathcal{R}$
  such                                                            that
  $\Set{t_1,\dots, t_n} \subseteq  \mathcal{A} \subseteq \mathcal{B}$,
  let   $A=\Span{\mathcal{A}}$  and   $B=\Span{\mathcal{B}}$  be   the
  corresponding  saturated subgroups.   It is  easy to  recognize that
  $N_B(A)=\Span{b\in    \mathcal{B}   \mid    [b,\mathcal{A}]\subseteq
    \mathcal{A}\cup   \Set{[\emptyset]}}$.    Similarly,  the   normal
  closure  $A^B$ of  $A$  in  $B$ is  the  subgroup  generated by  the
  intersection of all the  subsets $\mathcal{C}$ of $\mathcal{R}$ such
  that  $\mathcal{A} \subseteq  \mathcal{C} \subseteq\mathcal{B}$  and
  $[\mathcal{C}    ,\mathcal{B}   ]    \subseteq   \mathcal{C}    \cup
  \Set{[\emptyset]}$.  In particular, both the normalizer $N_B(A)$ and
  the normal closure $A^B$ are saturated.
\end{remark}
\begin{remark}
  The   condition   that   $T$   is  contained   in   the   normalizer
  $N=N_{\Sigma_n}(G)$ of  a saturated  subgroup $G$ cannot  be removed
  from   the hypotheses of Theorem~\ref{thm:normalizer_saturated}.      Indeed,     if
  $G=\Span{[n,\ldots,3]}$,  then  the   product  $[2]\cdot  [2,1]$  is
  contained  in  the  centralizer  of  $A$,  and  hence  also  in  the
  normalizer $N$ of  $A$, but none of the two  rigid commutators $[2]$
  or   $[2,1]   \in   T$    normalizes   $G$.    In   particular,   by
  Corollary~\ref{cor:homogeneus_components2}, the  subgroup $N$ cannot
  be saturated.
\end{remark}

\begin{remark}\label{rmk_algo}
  Another  proof   of  Theorem~\ref{thm:normalizers_indices}   can  be
  obtained  by  Theorem~\ref{thm:normalizer_saturated}.    Indeed, it  is  not 
  difficult, but rather tedious, to check that
  \begin{equation*}\mathcal{N}_{n}^{i}=\Set{c\in \mathcal{R}^* \mid
      [c,\mathcal{N}_{n}^{i-1}]\subseteq   \mathcal{N}_{n}^{i-1}  \cup
      \Set{[\emptyset]}}.\end{equation*}  for $0\le  i  \le n-2$.
  The result  then
  follows        by       Proposition~\ref{prop:size_of_Ni}.
\end{remark}

From                            Theorems~\ref{thm:normalizers_indices}
and~\ref{thm:normalizer_saturated}  and from  Remark~\ref{rmk_algo} we
derive  a straightforward  corollary resulting  in an  algorithm whose
$\mathsf{GAP}$    implementation    is     publicly    available    at
\href{https://github.com/ngunivaq/normalizer-chain}{\textsf{GitHub}}\footnote{\
  See \url{https://github.com/ngunivaq/normalizer-chain}}. This script
allows a significant speed-up in the computation of the normalizer $N$
of  a saturated  subgroup provided  that $N$  contains $T$.   We could
easily apply  this script to  compute our  normalizer chain up  to the
dimension $n=22$. For example, whereas the standard libraries required
one month on a cluster to compute the terms of the normalizer chain in
$\Sym(2^{10})$, our  implementation of the rigid  commutator machinery
gives the result  in a few minutes,  even on a standalone  PC.  With a
similar approach,  we can  also use rigid  commutators to  compute the
normal closure  of a  saturated subgroup.  Some  explicit calculations
are    shown    below     in    Section~\ref{sec:computation}.     Let
$\mathcal{M}_n^i$ be the set of all the rigid commutators belonging to
$N_n^i$.   From Theorem~\ref{thm:normalizer_saturated},  the subgroups
$N_n^i$ are saturated, hence  $N_n^i = \Span{\mathcal{M}_n^i}$ for all
$i \geq 1$.

\begin{corollary}\label{algo}
  The  set $\mathcal{M}_n^i$  is the  largest subset  of $\mathcal{R}$
  that normalizes $\mathcal{M}_n^{i-1}$, i.e.\
  \begin{equation*}\mathcal{M}_n^i=
    \Set{c\in   \mathcal{R}  \mid   [c,\mathcal{M}_n^{i-1}]  \subseteq
      \mathcal{M}_n^{i-1}}.\end{equation*}                                Moreover,
  $\mathcal{N}_n^i  =\mathcal{M}_n^i \setminus  \Set{[\emptyset]}$ for
  $1\le i\le n-2$.
\end{corollary}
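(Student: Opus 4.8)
The plan is to treat the two assertions separately, using throughout Theorem~\ref{thm:normalizer_saturated}, which guarantees that each $N_n^i$ is a saturated subgroup (note $T\le N_n^0\le N_n^i$), so that $N_n^i=\Span{\mathcal{M}_n^i}$ and $\mathcal{M}_n^i$ is, by definition, the complete set of rigid commutators contained in $N_n^i$. The elementary fact I would lean on is that the commutator of any two rigid commutators is again a rigid commutator (possibly $[\emptyset]$), by Lemma~\ref{lem:compound_commutators}, equivalently Proposition~\ref{prop:punctured}; together with item~\ref{item:symmetry} of Lemma~\ref{lem:compound_commutators} this lets me pass between the normalizing condition on $c$ and the set containment $[c,\mathcal{M}_n^{i-1}]\subseteq\mathcal{M}_n^{i-1}$.

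For the displayed equality I would first invoke Remark~\ref{rem:normal_izer_closure} with $\mathcal{A}=\mathcal{M}_n^{i-1}$ and $\mathcal{B}=\mathcal{R}$ (so $A=N_n^{i-1}$, $B=\Sigma_n$, and the hypothesis $\Set{t_1,\dots,t_n}\subseteq\mathcal{A}$ holds since $T\le N_n^{i-1}$), obtaining
\begin{equation*}
N_n^i=N_{\Sigma_n}(N_n^{i-1})=\Span{c\in\mathcal{R}\mid [c,\mathcal{M}_n^{i-1}]\subseteq\mathcal{M}_n^{i-1}\cup\Set{[\emptyset]}}.
\end{equation*}
Since $[\emptyset]=1\in\mathcal{M}_n^{i-1}$, the condition is exactly $[c,\mathcal{M}_n^{i-1}]\subseteq\mathcal{M}_n^{i-1}$; write $\mathcal{K}$ for this indexing set. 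The inclusion $\mathcal{K}\subseteq\mathcal{M}_n^i$ is immediate, as every $c\in\mathcal{K}$ is a rigid commutator lying in $\Span{\mathcal{K}}=N_n^i$. For the reverse inclusion I would take $c\in\mathcal{M}_n^i$, so that $c$ normalizes $N_n^{i-1}$; for each $d\in\mathcal{M}_n^{i-1}$ the identity $d^c=d[d,c]$ together with item~\ref{item:symmetry} of Lemma~\ref{lem:compound_commutators} shows $[c,d]\in N_n^{i-1}$, and since $[c,d]$ is a rigid commutator it lies in $\mathcal{M}_n^{i-1}$. Hence $c\in\mathcal{K}$, giving $\mathcal{M}_n^i=\mathcal{K}$.

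For the ``Moreover'' clause I would use Theorem~\ref{thm:normalizers_indices}, which for $1\le i\le n-2$ gives $N_n^i=\Span{\mathcal{N}_n^i}$ with $\mathcal{N}_n^i\subseteq\mathcal{R}^*$ a saturated set. The inclusion $\mathcal{N}_n^i\subseteq\mathcal{M}_n^i\setminus\Set{[\emptyset]}$ is clear. For the converse, let $r$ be a non-trivial rigid commutator in $N_n^i$; by Corollary~\ref{cor:saturated_subgroups} it admits a unique expansion $r=\prod_{c\in\mathcal{N}_n^i}c^{e_c(r)}$ as an ordered product of distinct rigid commutators. Comparing this (for a common proper order) with the canonical factorization of $r$ furnished by Theorem~\ref{thm:unique_representation} --- namely the single factor $r$ itself --- the uniqueness there forces $e_c(r)=1$ exactly for $c=r$, so that $r\in\mathcal{N}_n^i$. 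This yields the remaining inclusion and hence the equality.

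I expect the main obstacle to be the ``Moreover'' step rather than the first equality. The first one only requires reconciling the generating set produced by Remark~\ref{rem:normal_izer_closure} with the \emph{full} set $\mathcal{M}_n^i$, which closes up cleanly once one notes that commutators of rigid commutators are rigid. The genuinely delicate point is that $\mathcal{N}_n^i$ was built combinatorially as one particular saturated generating set, and one must rule out a rigid commutator lying in $N_n^i$ yet outside $\mathcal{N}_n^i$; this is precisely where the uniqueness in Theorem~\ref{thm:unique_representation} is indispensable, since without it a single rigid commutator could in principle be realized as a non-trivial product of the chosen generators.
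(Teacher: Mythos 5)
Your proposal is correct, and it is worth recording how it differs from the paper's own (very terse) derivation. The paper obtains the corollary in one sentence from Theorem~\ref{thm:normalizers_indices}, Theorem~\ref{thm:normalizer_saturated} \emph{and} Remark~\ref{rmk_algo}; in particular, the ``Moreover'' clause is meant to flow from the combinatorial identity $\mathcal{N}_{n}^{i}=\Set{c\in \mathcal{R}^* \mid [c,\mathcal{N}_{n}^{i-1}]\subseteq \mathcal{N}_{n}^{i-1} \cup \Set{[\emptyset]}}$ of that remark, whose verification the authors themselves describe as ``not difficult, but rather tedious'' and never write out. You bypass Remark~\ref{rmk_algo} entirely: for the displayed equality you use the same ingredients as the paper (saturation of the $N_n^i$ via Theorem~\ref{thm:normalizer_saturated}, the normalizer description of Remark~\ref{rem:normal_izer_closure} with $\mathcal{B}=\mathcal{R}$, and the fact from Lemma~\ref{lem:compound_commutators}/Proposition~\ref{prop:punctured} that commutators of rigid commutators are rigid), but for the ``Moreover'' clause you argue from Theorem~\ref{thm:normalizers_indices} plus uniqueness of factorization: a non-trivial rigid commutator $r\in N_n^i=\Span{\mathcal{N}_n^i}$ must, by comparing its expansion from Corollary~\ref{cor:saturated_subgroups} with its one-factor representation in Theorem~\ref{thm:unique_representation}, itself lie in $\mathcal{N}_n^i$. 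This is exactly Corollary~\ref{cor:homogeneus_components} in the case $k=1$, and you could simply have cited it rather than reproving it (its saturation hypothesis is supplied by Proposition~\ref{prop:size_of_Ni}). The trade-off: the paper's route through Remark~\ref{rmk_algo} packages an independent re-derivation of Theorem~\ref{thm:normalizers_indices} into the algorithm, at the cost of an unwritten combinatorial check; your route is more economical and fully self-contained given the results already proved, since it only uses that $\mathcal{N}_n^i$ is \emph{some} saturated generating set of $N_n^i$, never that it is characterized by the normalizing condition of Remark~\ref{rmk_algo}.
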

The construction of the terms of  the normalizer chain is then reduced
to the determination of the sets $\mathcal{M}_n^i$, a task which turns
out to be way faster than computing the terms of the normalizer chains
as  subgroups  of  $\Sigma_n$   via  the  \texttt{Normalizer}  command
provided by \textsf{GAP}.

	\section{A computational supplement}\label{sec:computation}
In this  section we show  an explicit  construction of the  first four
groups in the normalizer chain when $n=6$, i.e.\ in $\Sym(64)$. Let us
start  with  determining the  generators  of  $T$  in terms  of  rigid
commutators:
\begin{eqnarray*}
	t_1 =& [1] &= (1, 33)(2, 34)(3, 35) \ldots (30, 62)(31, 63)(32, 64),\\
	t_2 =& [2,1] &= (1, 17)(2, 18)(3, 19) \ldots (46, 62)(47, 63)(48, 64),\\
	t_3 = &[3,2,1] &= (1, 9)(2, 10)(3, 11) \ldots (54, 62)(55, 63)(56, 64), \\
	t_4 = &[4,3,2,1] &=(1, 5)(2, 6)(3, 7) \ldots (58, 62)(59, 63)(60, 64), \\
	t_5 = &[5,4,3,2,1] &= (1, 3)(2, 4)(5, 7) \ldots (58, 60)(61, 63)(62, 64),\\
	t_6 =& [6,5,4,3,2,1] &= (1, 2)(3, 4)(5, 6) \ldots (59, 60)(61, 62)(63, 64).
\end{eqnarray*}
We  have   that  $T  =   \Span{t_1,  t_2,  \ldots,  t_6}$   and,  from
Proposition~\ref{prop:U_as_normalizer},  its normalizer  in $\Sigma_n$
is
$N_6^0 = U_6=\Span{\mathcal U_6} = \Span{T,u_{ij} \mid 1 \le j < i \le
	6}$. Thus the generators of $N^0_6$, besides those of $T$, are
\begin{eqnarray*}
	\puncture{6}{5}, \puncture{6}{4}, \puncture{6}{3}, \puncture{6}{2}, \puncture{6}{1}, \\
	\puncture{5}{4}, \puncture{5}{3}, \puncture{5}{2}, \puncture{5}{1},  \\
	\puncture{4}{3}, \puncture{4}{2}, \puncture{4}{1},  \\
	\puncture{3}{2}, \puncture{3}{1},  \\
	\puncture{2}{1},
\end{eqnarray*}
consequently   $|N_6^0|   =   2^{21}$.   Now,   in   accordance   with
Eq.~\eqref{def:Ni}   and  Theorem~\ref{thm:normalizers_indices},   the
normalizer $N_6^1$  is generated  by the rigid  commutators previously
listed and by  $\eta_6$, the only element  of $\mathcal{W}_{6,3}$ (see
Eq.~\eqref{eq:C_lk}). The  commutator $\eta_6$ is the  punctured rigid
commutator based at $6$ and missing the integers $1$ and $2$, i.e.
\begin{equation}\label{eq:N1}
	\eta_6=[6,5,4,3] = \puncture{6}{2,1},
\end{equation}
where $1$ and  $2$ indeed represent the sole partition  of $3$ into at
least        two       distinct        parts.        From        this,
$\log_2\Size{N^{1}_6   :   N^{0}_6   }=   1  =   a_3$.    Again   from
Eq.~\eqref{def:Ni}   and  Theorem~\ref{thm:normalizers_indices},   the
normalizer  $N_6^2$  is generated,  along  with  the elements  already
mentioned,  by  the  rigid   commutators  in  $\mathcal{W}_{5,3}$  and
$\mathcal{W}_{6,4}$, i.e.\
\begin{eqnarray}
	\label{def_N2_1} [5,4,3] &=& \puncture{5}{2,1},\\[1pt]
	\label{def_N2_2} [6,5,4,2]&=&\puncture{6}{3,1}.
\end{eqnarray}
The   commutator    of   Eq.~\eqref{def_N2_1},   which    belongs   to
$\mathcal{W}_{5,3}$, is  the punctured  rigid commutator based  at $5$
and   missing  the   integers  $1$   and  $2$.    The  commutator   of
Eq.~\eqref{def_N2_2} instead, which belongs to $\mathcal{W}_{6,4}$, is
based at $6$  and misses the integers $1$ and  $3$, composing the sole
partition  of $4$  into  at  least two  distinct  parts.  Notice  that
$[5,4,3]   =    [\cancel{6},5,4,3]$.    Indeed,   as    discussed   in
Fact.~\ref{rem:reduction_for_N},        the       commutator        of
Eq.~\eqref{def_N2_1}   is    obtained   by   lifting   the    one   of
Eq.~\eqref{eq:N1},  i.e.\ by  removing $6$,  the left-most  element of
$\eta_6$.   We  have  $\log_2\Size{N^{2}_6  : N^{1}_6  }=  2  =  a_4$.
Similarly, $N_6^3$ is generated by adding the new rigid commutators
\begin{eqnarray}
	\label{def_N3_1}  [\cancel{5},4,3] = & [4,3] &= \puncture{4}{2,1},\\[1pt]
	\label{def_N3_2} [\cancel{6},5,4,2]  = & [5,4,2] &=\puncture{5}{3,1},\\[1pt]
	&\label{def_N3_3} [6,5,3,2] &= \puncture{6}{4,1},\\[1pt]
	&\label{def_N3_4} [6,5,4,1]&=\puncture{6}{3,2},
\end{eqnarray}
where the commutators of Eq.~\eqref{def_N3_1} and Eq.~\eqref{def_N3_2}
are respectively obtained by lifting those of Eq.~\eqref{def_N2_1} and
Eq.~\eqref{def_N2_2}, and the  commutators of Eq.~\eqref{def_N3_3} and
Eq.~\eqref{def_N3_4}   belong  to   $\mathcal{W}_{6,5}$,  respectively
corresponding  to the  partitions $4+1$  and  $3+2$ of  $5$.  At  this
stage,  we have  that  $\log_2\Size{N^{3}_6  : N^{2}_6  }=  4 =  a_5$.
Ultimately, the commutators
\begin{eqnarray*}
	[\cancel{4},3] = & [3] &= \puncture{3}{2,1},\\[1pt]
	[\cancel{5},4,2]  = & [4,2] &=\puncture{4}{3,1},\\[1pt]
	[\cancel{6},5,3,2] = & [5,3,2] &= \puncture{5}{4,1},\\[1pt]
	[\cancel{6},5,4,1] = & [5,4,1]&=\puncture{5}{3,2},\\[1pt]
	&[6,4,3,2]&=\puncture{6}{5,1},\\[1pt]
	&[6,5,3,1]&=\puncture{6}{4,2},\\[1pt]
	&[6,5,4]&=\puncture{6}{3,2,1}
\end{eqnarray*}
complete    the   set    of    rigid    generators of   $N_6^4$,    and
$\log_2\Size{N^{4}_6 : N^{3}_6 }= 7 = a_6$.\newline
\newline
Using  Corollary~\ref{algo}, we  can  find a  saturated  set of  rigid
generators  for  all the  elements  of  the  chain.  Notice  that  for
$i > 5$, the sequence $\log_2\Size{N^{i}_6 : N^{i-1}_6 }$ does not fit
the  pattern of  the sequence  $\{a_j\}$. Although  we do  not have  a
general  formula  to calculate  the  values  of the  relative  indices
between two  consecutive terms  in the normalizer  chain, they  can be
explicitly       determined       by      the       algorithm       in
\href{https://github.com/ngunivaq/normalizer-chain}{\textsf{GitHub}}.
Computational results  are summarized in Table~\ref{tab:n6},  where we
list all the relative indices of  the normalizer chain.  In the second
column, the logarithms of the sizes  of the intersections of each term
with each of the subgroups $S_6, \ldots, S_1$ are displayed.

\begin{table}[hptb]
	\begin{tabular}{c||c|c|c}
		$i$ & $
		\begin{matrix}
			\dim(N_6^i\cap S_j) \\
			j=6,5,4,3,2,1
		\end{matrix}
		$ & $\log_2\Size{N_6^i}$ & $\log_2\Size{N_6^i : N_6^{i-1}}$ \\
		\hline\hline $0$ &  6, 5, 4, 3, 2, 1  & $21 $ & $15$ \\ 
		\hline $1$ &  7, 5, 4, 3, 2, 1  & $22 $ & $1$ \\ 
		\hline $2$ &  8, 6, 4, 3, 2, 1  & $24 $ & $2$ \\ 
		\hline $3$ &  10, 7, 5, 3, 2, 1  & $28 $ & $4$ \\ 
		\hline $4$ &  13, 9, 6, 4, 2, 1  & $35 $ & $7$ \\ 
		\hline $5$ &  14, 10, 6, 4, 2, 1  & $37 $ & $2$ \\ 
		\hline $6$ &  16, 11, 7, 4, 2, 1  & $41 $ & $4$ \\ 
		\hline $7$ &  18, 12, 8, 4, 2, 1  & $45 $ & $4$ \\ 
		\hline $8$ &  19, 12, 8, 4, 2, 1  & $46 $ & $1$ \\ 
		\hline $9$ &  20, 12, 8, 4, 2, 1  & $47 $ & $1$ \\ 
		\hline $10$ &  21, 13, 8, 4, 2, 1  & $49 $ & $2$ \\ 
		\hline $11$ &  22, 14, 8, 4, 2, 1  & $51 $ & $2$ \\ 
		\hline $12$ &  23, 15, 8, 4, 2, 1  & $53 $ & $2$ \\ 
		\hline $13$ &  24, 16, 8, 4, 2, 1  & $55 $ & $2$ \\ 
		\hline $14$ &  25, 16, 8, 4, 2, 1  & $56 $ & $1$ \\ 
		\hline $15$ &  26, 16, 8, 4, 2, 1  & $57 $ & $1$ \\ 
		\hline $16$ &  27, 16, 8, 4, 2, 1  & $58 $ & $1$ \\ 
		\hline $17$ &  28, 16, 8, 4, 2, 1  & $59 $ & $1$ \\ 
		\hline $18$ &  29, 16, 8, 4, 2, 1  & $60 $ & $1$ \\ 
		\hline $19$ &  30, 16, 8, 4, 2, 1  & $61 $ & $1$ \\ 
		\hline $20$ &  31, 16, 8, 4, 2, 1  & $62 $ & $1$ \\ 
		\hline $21$ &  32, 16, 8, 4, 2, 1  & $63 $ & $1$ \\ 
	\end{tabular}
	\bigskip
	\caption{The normalizer chain for $n=6$}
	\label{tab:n6}
\end{table}

\section{Problems for future research}\label{sec:concl}
We  conclude this  work by highlighting  some further properties and structures of  the set
$\mathcal R$ of  rigid commutators  and providing  some hints  for future
research. 
\subsection*{Algebras of rigid commutators}
The  operation  of commutation  in  $\mathcal{R}$  is commutative  and
$[\emptyset]$ represents the \emph{zero} element.  Moreover, for every
$x,y \in \mathcal{R}$ the following identity is satisfied
\begin{equation*}
  [[x,x,y],x] = [[x,x],[y,x]] \text{\ \ \ \ \ \ \ {(Jordan
      identity)}}.
\end{equation*}
Let $\F$ be any field of  characteristic $2$ and let $\mathfrak{r}$ be
the  vector space  over $\F$  having  the set  $\mathcal{R}^*$ of  the
non-trivial rigid commutators as a basis.  The space $\mathfrak{r}$ is
endowed  with  a  natural  structure   of  an  algebra.   The  product
$x\star y$ of two rigid commutators $x,y \in \mathcal R$ is defined as
\begin{equation*}
  x\star y \deq
  \begin{cases}
    [x,y] & \text{if $[x,y]\ne [\emptyset]$}\\
    0 &\text{otherwise.}
  \end{cases}
\end{equation*}
This  operation  is  then  extended to  the  whole  $\mathfrak{r}$  by
bilinearity  and turns  $\mathfrak{r}$ into  a \emph{Jordan  algebra},
since it is commutative and $x\star x=0$ for all $x \in \mathfrak{r}$.
Moreover, if  $\mathcal{H}$ is a saturated  subset of $\mathcal{R}^*$,
then, on the one hand  the group $H=\Span{\mathcal{H}}$ is a saturated
subgroup of  $\Sigma_n$ and, on  the other hand, the  $\F$-linear span
$\mathfrak{h}$ of $\mathcal{H}$ is a subalgebra of $\mathfrak{r}$. The
property
$[\mathcal{R},\mathcal{H}]          \subseteq          \mathcal{H}\cup
\Set{[\emptyset]}$ is a necessary and  sufficient condition for $H$ to
be a  normal subgroup of  $\Sigma_n$ and  for $\mathfrak{h}$ to  be an
ideal  of   $\mathfrak{r}$.   We   point  out   that  the   fact  that
$\mathcal{R}$  is closed  under commutation  is crucial  to check  the
previous statement.   If $c$  is the  nilpotency class  of $\Sigma_n$,
then the product  of $c+1$ elements of $\mathfrak{r}$  is always zero,
so that $\mathfrak{r}$ is nilpotent.   The study of the properties and
the  representations  of  this  algebra  seems  to  be  a  problem  of
independent interest,  in connection with  the study of  the saturated
subgroups of $\Sigma_n$.

\subsection*{Again on the normalizer chain}
We   have  obtained   from  Theorem~\ref{thm:normalizers_indices}   an
explicit description of the non-trivial rigid generators of the $i$-th
term of the normalizer  chain when $1 \leq i \leq  n-2$, i.e.\ the set
$  \mathcal{N}_n^i$.   We have seen  that  $\mathcal{N}_n^i$  has a  nice
description  by  way  of Eq.~\eqref{eq:C_lk}  and  Eq.~\eqref{def:Ni},
i.e.\ it  is generated by  some rigid commutators either  belonging to
$\mathcal{U}_n$ or  having a punctured form  corresponding to suitable
partitions into at least two distinct parts.
Although we can efficiently compute  all the normalizers in the chain,
as described  in the  lines following  Corollary~\ref{algo}, it  is an
interesting problem  to find a  similar combinatorial formula  for the
generating set  $\mathcal{M}_n^i$ of $N^i_n$ when  $i>n-2$.  Moreover,
as already  mentioned in Section~\ref{sec:computation}, the  values of
the sequence  $\log_2\lvert N_n^i  : N_n^{i-1}\rvert$  do not  seem to
belong to any special known pattern when $i > n-2$.  Table~\ref{tab:4}
contains  the values  of  $\log_2\lvert N_n^i  : N_n^{i-1}\rvert$  for
$1\le i\le  14$ and  $3 \leq  n \leq 15$.   It is  an open  problem to
determine the general behavior of the sequence.

\begin{table}[phbt]
  \label{tab:nindices}
  \begin{tabular}{c||c|c|c|c|c|c|c|c|c|c|c|c|c|c}
    $n$ & \multicolumn{14}{c}{  $\vphantom{\Big|} \log_2\lvert N_n^i : N_n^{i-1}\rvert$ for $1 \leq i \leq 14$}                                                \\ \hline \hline 
    3  &  \hl{1} & 0& 0& 0& 0& 0& 0& 0& 0& 0& 0& 0& 0& 0   \\ \hline 
    4 &  \hl{1} & \hl{2}& 1& 1& 0& 0& 0& 0& 0& 0& 0& 0& 0& 0   \\ \hline 
    5 &    \hl{1}& \hl{2}& \hl{4}& 1& 2& 2& 1& 1& 1& 1& 0& 0& 0& 0  \\ \hline 
    6 &   \hl{1}& \hl{2}& \hl{4}& \hl{7}& 2& 4& 4& 1& 1& 2& 2& 2& 2& 1  \\ \hline 
    7  &   \hl{1}& \hl{2}& \hl{4}& \hl{7}& \hl{11}& 4& 7& 3& 4& 2& 2& 4& 4& 4  \\ \hline 
    8  &  \hl{1}& \hl{2}& \hl{4}& \hl{7}& \hl{11}& \hl{16}& 7& 5& 6& 2& 6& 6& 3& 3   \\ \hline 
    9  &   \hl{1}& \hl{2}& \hl{4}& \hl{7}& \hl{11}& \hl{16}& \hl{23}& 4& 9& 4& 11& 4& 12& 9   \\ \hline 
    10  &   \hl{1}& \hl{2}& \hl{4}& \hl{7}& \hl{11}& \hl{16}& \hl{23}& \hl{32}& 4& 14& 5& 20& 7& 19   \\ \hline 
    11 &  \hl{1}& \hl{2}& \hl{4}& \hl{7}& \hl{11}& \hl{16}& \hl{23}& \hl{32}& \hl{43}& 5& 22& 7& 32& 4   \\ \hline 
    12 &   \hl{1}& \hl{2}& \hl{4}& \hl{7}& \hl{11}& \hl{16}& \hl{23}& \hl{32}& \hl{43}& \hl{57}& 7& 32& 12& 43   \\ \hline 
    13 &   \hl{1}& \hl{2}& \hl{4}& \hl{7}& \hl{11}& \hl{16}& \hl{23}& \hl{32}& \hl{43}& \hl{57}& \hl{74}& 12& 42& 18   \\ \hline 
    14 &  \hl{1}& \hl{2}& \hl{4}& \hl{7}& \hl{11}& \hl{16}& \hl{23}& \hl{32}& \hl{43}& \hl{57}& \hl{74}&\hl{95}& 8& 24   \\ \hline 
    15 &   \hl{1}& \hl{2}& \hl{4}& \hl{7}& \hl{11}& \hl{16}& \hl{23}& \hl{32}& \hl{43}& \hl{57}& \hl{74}& \hl{95}& \hl{121}& 8   \\ 
  \end{tabular}
  \bigskip
  \caption{\footnotesize                   Values                   of
    $\log_2\lvert N_n^i : N_n^{i-1}\rvert$ for  small $i$ and $n$. For
    $i\le n-2$ these numbers do not depend on $n$ and in the table are
    represented by highlighted digits.}
  \label{tab:4}
\end{table}

\subsection*{An \emph{odd} generalization}
It appears natural  to ask whether a similar  rigid commutator machinery
can be developed  in  a  Sylow $p$-subgroup  of  the  symmetric  group
$\Sym(p^n)$  when $p$  is an  odd prime.   This looks  as an  entirely
different problem in  terms of techniques and results. For example, a
rigid commutator could contain  repetitions. Although such a machinery
might  have  a   weaker  cryptographic  application,  it   may  turn  out
interesting on a computational point of view.

 \section*{Acknowledgment}

 We  thank  the  staff  of  the \emph{Department  of  Information
 Engineering, Computer  Science and  Mathematics} at the  University of
 L'Aquila for helping us in managing the HPC cluster CALIBAN, which we
 extensively       used       to       run       our       simulations
 (\url{caliband.disim.univaq.it}).
  We   are  also  grateful   to  the
 \emph{Istituto  Nazionale   d'Alta  Matematica  -  F.\   Severi}  for
 regularly  hosting our  research seminar  \emph{Gruppi al  Centro} in
 which this paper was conceived.
 
\bibliographystyle{amsalpha}
\bibliography{sym2n_ref}

\end{document}